\newtheorem{theorem}{Theorem}[section]
\newtheorem{lemma}{Lemma}[section]
\newtheorem{remark}{Remark}[section]
\numberwithin{equation}{section}
\newcommand{\energy}[1]{\interleave#1\interleave_{E}}
\newcommand{\ener}[1]{\interleave#1\interleave}
\newcommand{\norm}[1]{\left\Vert#1\right\Vert}
\newcommand{\abs}[1]{\left\vert#1\right\vert}
\newcommand{\bra}{\langle}
\newcommand{\ket}{\rangle}
\newcommand{\inn}[1]{\langle #1 \rangle}
\newcommand{\Bup}{\boldsymbol{\Upsilon}}
\newcommand{\E}{\boldsymbol{E}}
\newcommand{\f}{\boldsymbol{f}}
\newcommand{\g}{\boldsymbol{g}}
\newcommand{\bG}{\boldsymbol{G}}
\newcommand{\fp}{{\boldsymbol{f}^\prime}}
\newcommand{\fpp}{{\boldsymbol{f}^{\prime\prime}}}
\newcommand{\gp}{{\boldsymbol{g}^\prime}}
\newcommand{\Csol}{ C_{\rm sol} }
\newcommand{\D}{\mathrm{D}}
\newcommand{\ii}{\boldsymbol{ \mathrm{i} }}
\newcommand{\V}{\boldsymbol{V}}
\newcommand{\Vh}{\boldsymbol{V}_h}
\newcommand{\bO}{\mathcal{O}}
\newcommand{\Th}{\mathcal{M}_h}
\newcommand{\Eht}{{\E}_{h,T}}
\newcommand{\Eh}{{\E}_h}
\newcommand{\PEp}{\boldsymbol{P}^{+}_h\E }
\newcommand{\Ph}{\boldsymbol{P}_h }
\newcommand{\Pih}{{\boldsymbol{\Pi}_h} }
\newcommand{\bph}{{\boldsymbol{\phi}}}
\newcommand{\ps}{{\boldsymbol{\Psi}}}
\newcommand{\pps}{{\boldsymbol{P}^{-}_h\boldsymbol{\Psi}}}
\newcommand{\w}{{\boldsymbol{w}}}
\newcommand{\bL}{{\boldsymbol{L}}}
\newcommand{\bH}{{\boldsymbol{H}}}
\newcommand{\tbH}{{\boldsymbol{\widetilde H}}}
\newcommand{\bX}{{\boldsymbol{X}}}
\newcommand{\Psie}{{\ps}_{\mathcal{E}}}
\newcommand{\Psia}{{\ps}_{\mathcal{A}}}
\newcommand{\Ee}{{\E}_{\mathcal{E}}}
\newcommand{\Ea}{{\E}_{\mathcal{A}}}
\newcommand{\gE}{{\g}_{\mathcal{E}}}
\newcommand{\gA}{{\g}_{\mathcal{A}}}
\newcommand{\GE}{{\bG}_{\mathcal{E}}}
\newcommand{\GA}{{\bG}_{\mathcal{A}}}
\newcommand{\bv}{\boldsymbol{v}}
\newcommand{\bu}{{\boldsymbol{u}}}
\newcommand{\bz}{{\boldsymbol{z}}}
\newcommand{\bw}{{\boldsymbol{w}}}
\newcommand{\vh}{{\boldsymbol{v}_h}}
\newcommand{\bbeta}{{\boldsymbol{\eta} }}
\newcommand{\bxi}{{\boldsymbol{\xi} }}
\newcommand{\cf}{\rm{cf. }}
\newcommand{\eq}[1]{\begin{align}#1\end{align}}
\newcommand{\eqn}[1]{\begin{align*}#1\end{align*}}
\newcommand{\fa}{\mathsf{f}}
\newcommand{\ls}{\lesssim}
\newcommand{\De}{\Delta}
\newcommand{\n}{\boldsymbol{\nu}}
\newcommand{\Ga}{\Gamma}
\newcommand{\la}{\lambda}
\newcommand{\na}{\nabla}
\newcommand{\Om}{\Omega}
\newcommand{\pa}{\partial}
\newcommand{\si}{\sigma}
\newcommand{\vp}{\varphi}
\newcommand{\ka}{\kappa}
\newcommand{\N}{\mathbb{N}}
\newcommand{\R}{\mathbb{R}}
\newcommand{\C}{\mathbb{C}}
\newcommand{\Sm}{\mathcal{T}}
\newcommand{\cs}{C_\mathcal{T}}
\newcommand{\ctr}{C_{\rm tr}}
\newcommand{\ext}{\mathcal{E}_{\Om}}
\DeclareMathOperator{\re}{{Re}}
\DeclareMathOperator{\im}{{Im}}
\DeclareMathOperator{\curl}{\boldsymbol{\mathrm{curl}}}
\DeclareMathOperator{\dive}{{\mathrm{div}}}
\DeclareMathOperator{\diam}{{diam}}
\newcommand{\qaq}{\quad\mbox{and}\quad}
\begin{document}
	
	\title[Higher-order EEM for time-harmonic Maxwell Equation]{Preasymptotic error estimates of 
		higher-order EEM \\  for the  time-harmonic Maxwell equations\\ with large wave number}
	\markboth{S. Lu and H. Wu}{error estimates of higher-order EEM for Maxwell equations}
	
	\author[S. Lu]{Shuaishuai Lu}
	\address{School of Mathematics, Nanjing University, Jiangsu, 210093, P.R. China. }
	\curraddr{}
	\email{ssl@smail.nju.edu.cn}
	\thanks{This work was partially supported  by National Key R\&D Program of China 2024YFA1012600, and by the NSF of China under grants 12171238 and 12261160361.}
	
	\author[H. Wu]{Haijun Wu}
	\address{School of Mathematics, Nanjing University, Jiangsu, 210093, P.R. China. }
	\curraddr{}
	\email{hjw@nju.edu.cn}
	\thanks{}
	
	\subjclass[2010]{
		65N12, 
		65N15, 
		65N30, 
		78A40  
	}
	
	\date{}
	
	\dedicatory{}
	
	\keywords{Time-harmonic Maxwell equations, Large wave number, EEM, Preasymptotic error estimates}
	
	\begin{abstract}

		The time-harmonic Maxwell equations with impedance boundary condition and large wave number are discretized using the second-type N\'{e}d\'{e}lec's edge element method (EEM). Preasymptotic error bounds are derived, showing that, under the mesh condition $\kappa^{2p+1}h^{2p}$ being sufficiently small, the error of the EEM of order $p$ in the energy norm is bounded by $\mathcal{O}\big(\kappa^{p}h^p + \kappa^{2p+1}h^{2p}\big)$, while the error in the $\kappa$-scaled $\boldsymbol{L}^2$ norm is bounded by $\mathcal{O}\big((\kappa h)^{p+1} + \kappa^{2p+1} h^{2p}\big)$. Here, $\kappa$ is the wave number and $h$ is the mesh size. Numerical tests are provided to  illustrate our theoretical results.
	\end{abstract}
	
	\maketitle
	
	\section{Introduction}\label{sc1}
	The numerical solution of Maxwell equations remains a significant challenge in scientific computing, given their fundamental role in modeling electromagnetic phenomena. These equations are vital to numerous applications, including electromagnetic scattering, optical systems, and modern communication technologies.  The time-harmonic Maxwell equations, in particular, arise from the frequency domain analysis of electromagnetic waves (cf. \cite{monk2003,jin2015theorycem}).
	
	In this work, we focus on the time-harmonic Maxwell equations governing the electric field $\E$, which is subject to the standard impedance boundary condition: 
	\begin{subequations} 
		\label{Maxwell}
		\begin{align} 
			\curl \curl \E - \ka^2 \E = \f & \quad \text{in } \Om, \label{eq:eq}  \\
			\curl \E \times \n - \ii\ka\la\E_T = \g & \quad \text{on } \Ga := \pa \Om, \label{eq:bc}
		\end{align} 
	\end{subequations} 
	where $\Om \subset \R^{3}$ is a bounded Lipschitz domain. In these equations, $\ii = \sqrt{-1}$ is the imaginary unit, $\n$ is the unit outward normal to $\Ga$, and $\E_{T} := (\n \times \E) \times \n$ represents the tangential component of $\E$ on $\Ga$. The wave number $\ka \gg 1$ and the impedance constant $\la > 0$ are two given physical parameters (cf. \cite{monk2003}). The right-hand side $\f$ corresponds to a prescribed current density, which is divergence-free (i.e. $\dive \f =0$). It is assumed that $\g \cdot \n = 0$, leading to $\g_{T} = \g$. The problem \eqref{Maxwell} describes the propagation of electromagnetic waves at a single frequency in scattering scenarios, especially when  the boundary of the computational domain is neither a perfect conductor nor free space. 
	
	A variety of numerical approaches have been developed for solving electromagnetic problem \eqref{eq:eq}, including boundary element methods  \cite{buffa2002boundary,buffa2003galerkin,buffa2003boundary}, discontinuous Galerkin (DG) methods \cite{feng2014absolutely,feng_lu_xu_2016,hiptmair2013error,lu2017HDGMaxwell,chen2019hybridizable}, and certain interior penalty methods \cite{perugia2002,houston2005interior,pplu2019}. Among these, the finite element method (FEM), particularly N\'{e}d\'{e}lec's $ \bH(\curl; \Om)$-conforming edge element method (EEM) \cite{Nedelec1980,Nedelec1986}  is regarded as one of the most prominent and effective techniques in computational electromagnetics, as emphasized in \cite{nedelec2001acoustic,monk1992finite,monk1993analysis,monk2003simple,monk2003,Nedelec1980,Nedelec1986,houston2005interior}.
	
	A major challenge when solving high-frequency wave scattering problems  is that FEM of fixed order may suffer from the ``pollution effect'', where accuracy deteriorates as the wave number increases \cite{Babuska1997pollution,ihlenburg1995finite,ihlenburg1997finite}. This issue is particularly significant in both theoretical and practical applications, necessitating the derivation of error estimates that explicitly account for the impact of $\ka$. In what follows, by ``asymptotic error estimate", we mean a wave-number-explicit error estimate without pollution error (i.e., quasi-optimal); in contrast, by ``preasymptotic error estimate", we mean one with nonnegligible pollution effect. The understanding of FEM for the high-frequency Helmholtz problem $\Delta u + \ka ^2 u = f$ has seen significant advancements over the past few decades through various works  \cite{Babuska1997pollution,ihlenburg1995finite,ihlenburg1997finite,melenk2010dtn,melenk2011,ZhuWu2013II,DuWu2015,Wu2014Pre,Wu2018jssx,LiWupml2019,galkowski2024sharp,chaumont2019general,li2023higherorderpml}. For example, it has been shown in \cite{DuWu2015,galkowski2024sharp} that when applying FEM  to the Helmholtz equation with an impedance boundary condition, the following preasymptotic error bound holds if $\ka^{2p+1}h^{2p}$ is sufficiently small:
	\eq{\label{eq:helmBound}
		\ener{u-u_h}_h  \ls   (1+ \ka^{p+1}h^{p}  ) \inf_{v_{h} \in V_h}\ener{u-v_h}_h \ls \bO\big(\ka^p h^p+ \ka^{2p+1} h^{2p}\big),
	} 
	where  $\ener{\cdot}_h=\big(\norm{\nabla \cdot}^2+\ka^2\|\cdot\|^2\big)^{\frac{1}{2}}$, $V_h$ is the $p$-th order Lagrange finite element space  and $u_h$ is the discrete solution. The above error bound \eqref{eq:helmBound} shows that the actual error of FEM is the sum of the best approximation error plus a pollution term which grows rapidly when $\ka \rightarrow +\infty$.  On one hand, preasymptotic error estimates like \eqref{eq:helmBound} provide valuable insights into how the wave number  $\ka$ influence the reliability and accuracy of FEM as $\ka $ increases. And one can deduce that the quasi-optimality of $p$-th order FEM  is ensured by the condition that $\ka^{p+1}h^p$ is sufficiently small.  
	On the other hand, the above mesh condition is practical, as achieving a discrete solution with reasonable accuracy requires the pollution term $\bO(\ka^{2p+1} h^{2p})$ to be small.  Both works \cite{DuWu2015} and \cite{galkowski2024sharp} are based on the  idea of modified duality argument \cite{ZhuWu2013II}  to obtain preasymptotic error estimates \eqref{eq:helmBound}. However, they employ different techniques to obtain results under the weakest currently known mesh condition  that $ \ka^{2p+1} h^{2p}$ is sufficiently small. The work in \cite{DuWu2015} uses the  traditional elliptic-projection combined with the  negative-norm estimates technique, while the authors of \cite{galkowski2024sharp} adapt the  traditional elliptic-projection   by constructing a novel elliptic operator, which can be obtained by adding an appropriate smoothing operator to the original partial differential operator.  
	
	Now, a natural and important question arises: how does the accuracy deteriorate as $\ka \rightarrow +\infty$ when the EEM is applied to the time-harmonic Maxwell equations? Or more precisely, does  a  similar preasymptotic error bound \eqref{eq:helmBound} still hold for the  Maxwell cases? Compared to high-frequency Helmholtz-type problems, analyses for indefinite Maxwell problems are more intricate because of the nontrivial kernel space of the  $\curl$ operator. Up to now, the $\ka$-implicit convergence analysis (see, e.g.,  \cite{monk2003,gatica2012finite,monk2003simple}) and asymptotic error estimates of EEM for \eqref{eq:eq} are well-established (see, e.g.,  \cite{melenk2020wavenumber,melenk2023wavenumber,chaumont2023asymptotic}) and existing dispersion analysis on structured meshes (see, e.g., \cite{ainsworth2003dispersive,Ainsworth2004,ainsworth2004dispersive}) suggest that the EEM applied to the Maxwell problems has similar dispersive behavior as that of the FEM  for the Helmholtz problem. Among these works, a   milestone is the asymptotic error estimates proposed in \cite{melenk2020wavenumber,melenk2023wavenumber} for the time-harmonic Maxwell equations with transparent and impedance boundary
	conditions, respectively. They {proved} that the discrete solution of $hp$-EEM is pollution-free (i.e., quasi-optimal)  
	under the condition that $p \geq C \ln \ka$ for some $C>0$ 
	and $\ka h/p$ is sufficiently small.   In contrast, for the fixed order EEM, for instance, in the case of impedance boundary condition, it requires that  $\ka^{p}h^{p-1}$ is sufficiently small (see \cite[Lemma~9.5]{melenk2023wavenumber}, note that $p=2$ here corresponds to $p=1$ in that reference, which refers to the quadratic edge element of the first type).   
	However,  rigorous preasymptotic error estimates for EEM applied to time-harmonic Maxwell equations remain incomplete in the literature.
	Addressing the above question forms the primary motivation of this paper.
	
	Research on preasymptotic error estimates of EEM applied to \eqref{eq:eq}  has only recently seen some progress. In our recent work  \cite{lu2024preasymptotic},  we  developed preasymptotic error estimates for \eqref{Maxwell} using linear EEM and some $\bH(\curl)$-conforming interior penalty EEM.  It was proved that if $\ka^3 h^2\leq C_0$ for some $C_0>0$, then
	\eq{
		\ener{\E-\Eh} &\ls   \bO \big(\ka h+ \ka^3 h^2\big),
	} 
	where $\ener{\cdot}=\big(\norm{\curl \cdot}^2+\ka^2\|\cdot\|^2 + \ka\la\|\cdot_T\|^2 _{ \Ga }\big)^{\frac{1}{2}}$ is the energy norm. Instead of deriving preasymptotic error estimates for EEM by following the usual approach (see, e.g., \cite{monk2003}),  the core of  \cite{lu2024preasymptotic} lies in a  wave-number-explicit stability estimate of the discrete solution $\Eh$ by using the modified duality argument, which is then used to derive the desired error estimate. 
	
	More recently, for the heterogeneous time-harmonic Maxwell equations with perfect electric conductor (PEC) boundary conditions (i.e., $ \E\times \n   ={\bm{0}}$ on $\Ga$), Chaumont-Frelet and co-authors  \cite{chaumont2024sharp} established the preasymptotic error bound 
	\eq{ \ener{\E-\E_h }  \leq C_1 \Big(1 + (\ka h )^p \ka^2 \Csol \Big) \min_{\bv_h \in \V_{h}}  \ener{\E-\bv_h },	
		\label{eq:H1bound}}
	under the mesh condition $\ka^{2p+2}h^{2p} \Csol \leq C_2$
	for some positive constants  $C_1$ and $C_2$. 
	Here, $\Csol$ is the $\bL^2(\Om)\to \bL^2(\Om)$ norm of the solution operator $\f \mapsto \E$, and $\V_{h}$ represents the first- or second-type  edge element space of order $p$. This work extends  the methodology proposed in \cite{galkowski2024sharp} to the time-harmonic Maxwell problems with PEC boundary conditions.  A key insight from \cite{chaumont2024sharp} is that the analogous elliptic operator in\cite{galkowski2024sharp} for Maxwell problems can be constructed by adding a smoothing operator for functions lying in the complement of the kernel space of the $\curl$ operator.

	In this paper, a sequel of our previous work \cite{lu2024preasymptotic}, we study the higher-order EEM  for the time-harmonic Maxwell  problem \eqref{Maxwell} with impedance boundary condition  and prove the following preasymptotic error  estimate  under the condition that  $\ka^{2p+1} h^{2p}$ is sufficiently small:
	\eq{\ener{\E-\Eh} &\ls   \big( 1 +  \ka^{p+1}h^{p} \big) \inf_{\bv_{h} \in \V_h}\ener{\E -\bv_h}\ls \bO\big(\ka^p h^p+ \ka^{2p+1} h^{2p}\big), \label{eq:introPreErr}}
	where $\Vh$ is the second-type edge element space of order $p$ and $\Eh\in \Vh $ is the EEM solution. 
	Note that the methodology presented in \cite{chaumont2024sharp}  for the Maxwell problem with PEC boundary condition cannot be directly  applied to the analysis of problem \eqref{Maxwell}. For example,  Assumption~2.3~(\textrm{ii}) in \cite{chaumont2024sharp} relies on the fact that functions in $\bH_{0}(\curl;\Om)$ have vanishing tangential trace, and the regularity results established  in \cite[Lemma~11.2, Lemma~11.5]{chaumont2024sharp} do not hold for  system  \eqref{Maxwell}. To  overcome  this limitation and develop new preasymptotic error estimates for higher-order EEM for the problem \eqref{Maxwell}, we first drew inspiration from the  auxiliary elliptic  operators  used  in \cite{galkowski2024sharp,li2023higherorderpml,chaumont2024sharp}, and constructed a new Maxwell elliptic   operator tailored for  the Maxwell problem \eqref{Maxwell} with impedance boundary conditions, using a smoothing operator defined simply by truncating eigenfunction expansions. We then provide a new and simpler proof of the regularity decomposition result from \cite[Theorem 7.3]{melenk2023wavenumber} but under a weaker smoothness assumption on the domain $\Omega$. Based on this decomposition result and using the trick of ``modified duality argument" \cite{ZhuWu2013II,lu2024preasymptotic,chaumont2024sharp,li2023higherorderpml}, we may prove that the EEM satisfies the same stability estimates (see Theorem~\ref{thm:EEMStability}) as the continuous problem \eqref{Maxwell} (see \eqref{eq:stability0}), under the mesh condition that  $\ka^{2p+1} h^{2p}$ is sufficiently small. Finally, we derive the preasymptotic error estimates for the EEM in both energy and $\bL^2$ norms (see \eqref{eq:introPreErr} and Theorem~\ref{thm:ErrorResult}). 
	Moreover, our preasymptotic error bounds do not contain any $\ka$-dependent constants.
	The key contributions of this work are summarized as follows:
	\begin{itemize}
		\item  Wave-number-explicit $\bH^{m}$ regularity  and regularity by decomposition  results  (see Theorems~\ref{thm:stability} and \ref{thm:reg-dec} for the problem  \eqref{Maxwell})  are derived under the assumption of a $C^{m}$-domain. These results improve upon those in \cite{melenk2023wavenumber,lu2018regularity}.
		\item  The preasymptotic error estimate  \eqref{eq:introPreErr} in the energy norm is proved under the mesh condition that $\ka^{2p+1}h^{2p}$ is sufficiently small. Using the second-type edge element method, this result generalizes the well-known preasymptotic error bound \eqref{eq:helmBound}  for the Helmholtz problem to the more challenging  time-harmonic Maxwell equations with impedance boundary condition. 
		\item  Preasymptotic error estimate in the $\bL^2$ norm is also obtained under the same mesh condition.  This is the first such result for higher-order EEM of the second type applied to time harmonic Maxwell problems. 
	\end{itemize}
	
	The remainder of this paper is organized as follows. In Section \ref{sc2}, we initially present the variational formulation of \eqref{Maxwell} and formulate the EEM. Subsequently, we furnish preliminary results related to stability, regularity, and approximation estimates. Section \ref{sc3} introduces the auxiliary Maxwell elliptic problem and proves error estimates for the corresponding elliptic projections.   In Section \ref{sc4}, we  prove the decomposition result and derive the preasymptotic stability and  error estimates for the EEM. Finally, in Section~\ref{sc5}, we conduct  numerical experiments to illustrate our theoretical findings.
	
	Throughout this paper, we use notations $A \ls B$ and $A \gtrsim B$ for
	the inequalities $A \leq C B$ and $A \geq C B$ respectively,
	where $C$ is a positive number independent of the mesh size and wave number $\ka$,
	but the value of which may vary in different instances. 
	The shorthand notation $A \simeq B$   represents  $A \leq C B$ and $B \leq C A$ hold simultaneously. 
	We suppose $\la \simeq 1$ and  $\ka \gg 1$ since we  consider high frequency problem.   Let $\mathbb{N} =\{0,1,2,\cdots\}$ and $\N_1 = \N\setminus \{0\}$.
	In this paper, we assume   that the bounded open set $\Om \in \R^3$
	is strictly star-shaped with respect to a point ${\bm x}_0\in\Om$, that is, there exists $C>0$ such that 
	$({\bm x}-{\bm x}_0)\cdot \n \geq C \;\; \forall\bm x\in\Ga$. We  remark that in this setting, $\Ga=\pa \Om$ is connected. Without loss of generality, we assume that $\diam(\Om)\simeq 1$ since  {the other case of $\diam(\Om)\not\simeq 1$ can be transformed to this case} through a suitable coordinate scaling.

	\section{EEM and preliminaries}\label{sc2}
	In this section, we first introduce some Sobolev spaces and  formulate the variational formulation and EEM for our model problem \eqref{Maxwell}. We then recall some preliminary results which will be useful in the later sections.
	
	\subsection{Spaces and norms}
	To state the variational formulation of \eqref{Maxwell}, we first introduce some notations.  For a bounded domain $D\subset \R^3$ with a Lipschitz 
	boundary $\pa D$,  and any real number $s \ge 0$,  we shall use the standard Sobolev spaces   {$L^{\infty}(D)$ and  $H^s(D)$, their norms $\|\cdot\|_{L^{\infty}(D)}$ and  $\|\cdot\|_{s,D}$}, and
	refer to \cite{adams2003sobolev,brenner2008mathematical,monk2003} for their definitions.  
	On the boundary of $D$, the space $H^s(\pa D)$ is well-defined if $D$ is $C^m$ for some $m \in \N_1$ and $s < m$ (cf., \cite{amrouche1998vector}).  
	For vector-valued functions, these spaces are indicated by boldface symbols, such as  $\bH^s(D)$ and $\bH^s(\pa D)$.
	When  $D=\Om$, we  {omit the subscript $D$} and the index $s$ if $s=0$ and use the shorthands   {$ \|\cdot\| := \|\cdot\|_{0,\Om},  \|\cdot\|_{\Ga}:= \|\cdot\|_{0,\Ga}$} for simplicity.
	We denote by $(\cdot, \cdot)_{D}$ and $\bra\cdot, \cdot \ket_{\Sigma}$ for
	$\Sigma \subset \pa D$
	the $\bL^2$ inner-product on $D$ and  {$\pa \Sigma$, respectively. For simplicity, write} $(\cdot, \cdot):=(\cdot, \cdot)_{\Om}$ and $\bra\cdot, \cdot \ket:=\bra\cdot, \cdot \ket_{\Ga}$.
	We introduce the following $\ka$-weighted norms $\norm{\cdot}_{m,\ka}$  on $\bH^m(\Om)$ for $m \in \N$ and $\norm{\cdot}_{m-\frac12,\ka,\Ga}$  on  $\bH^{m-\frac12}(\Ga)$ for $m \in \N_1$, respectively:
	\eq{\norm{\cdot}_{m,\ka}:=\bigg(\sum_{j=0}^m\ka^{2j}\|\cdot\|_{m-j}^2\bigg)^\frac12,\quad
		\norm{\cdot}_{m-\frac12,\ka,\Ga}:=\bigg(\sum_{j=0}^{m-1}\ka^{2j}\|\cdot\|_{m-j-\frac12,\Ga}^2\bigg)^\frac12,}
	and define the following negative norm on  $\tbH^{-m}(\Om):=\big(\bH^m(\Om)\big)'$:
	\eqn{\norm{\cdot}_{-m,\ka}:=\sup_{{\bm 0}\neq \bv\in \bH^m(\Om)}\frac{|\bra\cdot, \bv\ket_{\tbH^{-m}\times \bH^m}|}{\norm{\bv}_{m,\ka}}.}

	\begingroup
	\allowdisplaybreaks	
	We introduce some spaces on the domain $\Om$
	\begin{align*}
		H_0^1(\Om) &:= \{v \in H^1(\Om) :  v|_{\Ga} =0\},\\ 
		\bm{H}(\curl ;\Om)&:=\{\bm{v}\in \bL^2(\Om): \curl \bm{v}\in \bL^2(\Om) \},\\
		\bm{H}^s(\curl ;\Om)&:=\{\bm{v}\in \bH^s(\Om): \curl \bm{v}\in \bH^s(\Om) \},\\
		\bm{H}_0(\curl ;\Om)&:=\{\bm{v}\in \bm{H}(\curl ;\Om):   \bm{v} \times \n|_{\Ga}=\bm{0} \},\\
		\bm{H}(\dive  ;\Om)&:=\{\bm{v}\in \bL^2(\Om):  \dive \bm{v}\in L^2(\Om) \},\\
		\bm{H}_0(\dive  ;\Om)&:=\{\bm{v}\in \bm{H}(\dive  ;\Om): \bm{v}\cdot\n|_{\Ga}=0 \},\\
		\bm{H}(\dive  ^0;\Om)&:=\{\bm{v}\in \bL^2(\Om): \dive \bm{v}=0\}, \\
		\bm{H}^s(\dive^0 ;\Om)&:=\{\bm{v}\in \bH^s(\Om): \dive \bv=0 \},\\
		\bm{H}_0(\dive  ^0;\Om)&:= \bm{H}_0(\dive  ;\Om) \cap \bm{H}(\dive  ^0;\Om), \\
		\bX &:=\bm{H}(\curl ;\Om)\cap \bm{H}_0(\dive^0 ;\Om),
	\end{align*}
	\endgroup
	and some spaces on $\Ga$ (cf. \cite{buffa2002traces})
	\eqn{ 
		\bm{H}^s(\dive_{\Ga} ;\Ga)&:=\{\bm{v}\in \bH^s(\Ga): \dive_{\Ga} \bm{v}\in H^s(\Ga) \}, \\
		\bL^2_t(\Ga) &:=\{\bm{v}\in \bL^2(\Ga): \bv\cdot\n |_{\Ga} = 0 \}, }
	with norms
	\eqn{
		\norm{\bm v}_{\bH(\curl)}&=\big(\norm{\curl\bm v}^2+\norm{\bm v}^2\big)^\frac12, \\
		\norm{\bm v}_{\bH^s(\curl)}&=\big(\norm{\curl\bm v}_s^2+\norm{\bm v}_s^2\big)^\frac12,
	}
	where $v|_{\Ga}$ is understood as the trace from $H^1(\Om)$ to $H^\frac12(\Ga)$ and $\bv \times \n|_\Ga$  
	is the tangential trace from $\bH(\curl; \Om )$ to $\bH_{\dive}^{-1/2}(\Ga)$. Similarly, we will understand 
	$\bv_T$ as the tangential trace from $\bH(\curl; \Om )$ to $\bH_{\curl}^{-1/2}(\Ga)$  (see, e.g., \cite{monk2003,melenk2020wavenumber}), $\bv\cdot\n|_{\Ga}$ is the normal trace from $\bH(\dive;\Om)$ onto $\bH^{-\frac{1}{2}}(\Ga)$, and	$\dive_\Ga$ is the  surface divergence operator (see, e.g., \cite{monk2003,melenk2023wavenumber}). We also denote by $\bm{H}^s_t(\dive_{\Ga} ;\Ga):=\bm{H}^s(\dive_{\Ga} ;\Ga)\cap \bL^2_t(\Ga)$
	for simplicity.
	\subsection{ The EEM}
	First, we introduce the variational formulation for the Maxwell problem \eqref{Maxwell}. Define the ``energy" space
	\eqn{&\V:= \big\{ \bv\in \bH(\curl; \Om) :  {\bv_T \in \bL^2(\Ga) \big\}}  \quad\text{with norm}\quad \ener{\bv}:=\big(\norm{\curl\bm v}^2+\ka^2\|\bv\|^2 +\ka\la\|\bv_T\|^2 _{ \Ga }\big)^{\frac{1}{2}},}
	and the following sesquilinear form   $a : \V\times \V \rightarrow \C$ by
	\begin{equation}
		a\left({\bm u}, {\bm v} \right):=\left(\curl {\bm u}, \curl {\bm v}\right) 
		-\ka^{2}\left({\bm u}, {\bm v} \right)-\ii \ka \la \inn{ {\bm u}_T,{\bm v}_T }.
		\label{eq:a}
	\end{equation}
	Then the variational formulation of \eqref{Maxwell} reads as:
	Find $\E  \in \boldsymbol{V} $ such that
	\begin{equation}
		a \left(\E, {\bm v} \right)=\left(\boldsymbol{f}, 
		{\bm v} \right)+\left\langle \g,{{\bm v}_T}\right\rangle 
		\quad \forall {\bm v} \in {\boldsymbol{V}}.
		\label{eq:dvp}
	\end{equation}
	The  well-posedness of the above problem is guaranteed by \cite[Theorem~4.17]{monk2003}.

	Next we follow \cite{melenk2023wavenumber} to introduce a regular and quasi-uniform triangulation  $\mathcal{M}_{h}$  of $\Om$, which satisfies:
	\begin{enumerate}
		\item {The (closed)  elements} $K\in \Th$ cover $\Om$, i.e.,  
		{$\bar{\Om}=\cup_{K\in\Th}K$.}  
		\item Each element $K$ is  associated with  a $C^1$-diffeomorphism 
		$F_K:\hat{K}\rightarrow K$. The set $\hat{K}$ is the reference tetrahedron. Denoting $h_K:=\diam(K)$, there holds, with some shape-regularity constant $\si$,
		\eqn{h_K^{-1}\norm{\D F_K }_{L^\infty(\hat K)}+h_K\norm{(\D F_K )^{-1}}_{L^\infty(\hat K)}\le \si,}
		where $\D F_K $ is the Jacobian matrix of $F_K$. 
		\item The intersection of two elements is only empty, a vertex, an edge, a face, or they coincide
		(here, vertices, edges, and faces are the images of the corresponding
		entities on the reference element $\hat{K}$). The parameterization of common edges
		or faces is compatible. That is, if two elements $K$, $K'$ share an edge (i.e.,  
		$F_K(e) = F_{K'}(e')$ for edges $ e, e'$ of $\hat{K}$ ) or a face 
		(i.e., $F_K ( {\fa} ) = F_{K'} ( \fa' )$ for faces $\fa, \fa'$ of $\hat{K}$), 
		then $F^{-1}_K \circ  F_{K'}: \fa' \rightarrow \fa $ is an affine isomorphism.
	\end{enumerate}
	To ensure the approximability of higher-order EEM (see Lemma~\ref{lem:interpEst}), we also assume that the curved triangulation is regular in the following sense (cf. \cite[Assumption~10.1]{chaumont2024sharp}): the element maps $F_K$ satisfies
	\eqn{
		h_K^{-|\alpha|}\norm{ \D^{\alpha}F_K }_{L^\infty(\hat K)}  \leq C  
		\quad \forall K\in \Th, \;  1 \leq \abs{\alpha}\leq p+2,
	}
	where $\alpha =(\alpha_1,\alpha_2,\alpha_3) \in \N^3$ is a multi-index with length $\abs{\alpha} = \sum_{i=1}^3 \alpha_i$.
	In the following, we denote by $h:=\max_{K\in\Th} h_K$. 
	
	Next, we introduce the EEM. Let $\V_{h}$ 
	be the $p$-th ($p\in \N_1$) order N\'{e}d\'{e}lec edge element space of the second type (cf. \cite{monk2003}):
	\begin{equation}
		\V_{h}:=\Big\{\bv_{h} \in \V :\; (\D F_K  )^T  ( \left.\bv_{h}\right|_{K} )\circ F_K
		\in\big(\mathcal{P}_{p}(\hat{K})\big)^{3}  \;\forall K \in \mathcal{M}_{h}\Big\}.	
		\label{eq:FEMspace}
	\end{equation}
	Here, $\mathcal{P}_{p}(\hat{K})$ denotes the set of polynomials of degree less than or equal to $p$ on $\hat{K}$.
	The EEM for the Maxwell problem \eqref{Maxwell} reads as: 
	Find $ \Eh \in \boldsymbol{V}_{h}$ such that
	\begin{equation}
		a\left(\Eh, {\bm v}_h\right)=\left(\boldsymbol{f}, 
		{\bm v}_h\right)+ \inn{\g,{\bm v}_{h,T} } 
		\quad \forall {\bm v}_h \in \boldsymbol{V}_{h}.
		\label{eq:EEM}
	\end{equation}
	It is clear that this is a consistent discretization formulation which means that if
	$\E \in$ $\V$, then
	\begin{equation}
		a \left(\E-\Eh,{\bm v}_h \right)=0 \quad
		\forall {\bm v}_h \in \V_h.	
		\label{eq:orthogonality}
	\end{equation}

	\subsection{Some approximation results}
	For any $\bv \in \bH^1(\curl;\Om)$, denote by $\Pih \bv \in \V_{h}$ its $p$-th order edge element interpolant. 
	We have the following interpolation error estimates.
	
	\begin{lemma}\label{lem:interpEst}
		For any  $  j \in \N_1 $, $j\leq p$, and $\bv \in \bH^{j+1}(\Om)$, there hold that
		\eq{   \|\curl(\bv - \Pih \bv )\| & \ls h^j  \|\bv\|_{\bH^{j}(\curl)}, \label{eq:curlinterp}  \\
			h^{\frac{1}{2}} \|(\bv - \Pih \bv)_T\|_{\Ga} +	\|\bv - \Pih \bv\|  & \ls h^{j+1} \|\bv\|_{j+1}. \label{eq:interp} 	  	
		}
		Furthermore, if $kh \ls 1$, we have 
		\eq{
			\ener{ \bv - \Pih \bv } & \ls h^{j} \|\bv\|_{j+1}.  \label{eq:interpener}
		}
	\end{lemma}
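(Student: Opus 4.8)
The plan is to prove Lemma~\ref{lem:interpEst} by a standard scaling/Bramble--Hilbert argument on the reference element, exploiting the commuting-diagram property of the second-type N\'{e}d\'{e}lec interpolant under the covariant (Piola) transform $\bv \mapsto (\D F_K)^T(\bv\circ F_K)$. First I would recall that if $\hat{\bv}$ denotes the pullback of $\bv$ to $\hat K$ and $\hat{\Pi}$ the reference interpolant onto $(\mathcal{P}_p(\hat K))^3$, then $\widehat{\Pih \bv} = \hat\Pi\hat{\bv}$, so that on each $K$ the error $\bv - \Pih\bv$ is the push-forward of $\hat{\bv} - \hat\Pi\hat{\bv}$. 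Since $\hat\Pi$ reproduces $(\mathcal{P}_p(\hat K))^3 \supset (\mathcal{P}_j(\hat K))^3$ for $j \le p$ and is bounded on $\bH^{j+1}(\hat K)$ (using $j+1 > 3/2$, so the edge/face/interior degrees of freedom are well-defined), the Bramble--Hilbert lemma gives $\|\hat{\bv} - \hat\Pi\hat{\bv}\|_{\bH^1(\hat K)} \lesssim |\hat{\bv}|_{\bH^{j+1}(\hat K)}$ and likewise $\|\curl(\hat{\bv}-\hat\Pi\hat{\bv})\|_{\bL^2(\hat K)} \lesssim |\widehat{\curl\,\bv}|_{\bH^j(\hat K)}$ via the commuting relation $\widehat{\curl\,\bv} = (\det \D F_K)^{-1}(\D F_K)(\curl\,\bv)\circ F_K$ for the contravariant transform.

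Next I would transfer these reference estimates back to $K$. Under the covariant transform one has $\|\bv - \Pih\bv\|_{\bL^2(K)} \lesssim h_K^{3/2}\|\D F_K^{-1}\|_{L^\infty}\|\hat{\bv}-\hat\Pi\hat{\bv}\|_{\bL^2(\hat K)}$ and $\|\curl(\bv-\Pih\bv)\|_{\bL^2(K)} \lesssim h_K^{3/2}\|\D F_K^{-1}\|_{L^\infty}^{?}\cdots$; more carefully, the scaling of the chain-rule derivatives up to order $j+1$ is controlled by the regularity assumption $h_K^{-|\alpha|}\|\D^\alpha F_K\|_{L^\infty(\hat K)} \le C$ for $1 \le |\alpha| \le p+2$, together with shape-regularity $h_K^{-1}\|\D F_K\|_{L^\infty} + h_K\|\D F_K^{-1}\|_{L^\infty} \le \sigma$. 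Summing the local bounds over $K \in \Th$ and using $h_K \le h$ yields \eqref{eq:curlinterp} and the $\bL^2$-part of \eqref{eq:interp}. For the boundary term in \eqref{eq:interp}, I would use a scaled trace inequality $\|(\bv-\Pih\bv)_T\|_{\bL^2(\Ga \cap \partial K)}^2 \lesssim h_K^{-1}\|\bv-\Pih\bv\|_{\bL^2(K)}^2 + h_K\|\nabla(\bv-\Pih\bv)\|_{\bL^2(K)}^2$ on boundary elements, combined with the full $\bH^1$ interpolation estimate on $K$ (again from Bramble--Hilbert at order $j+1 \ge 2$), which gives the extra $h^{1/2}$ loss; alternatively one invokes the trace result directly on the reference element and scales.

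Finally, \eqref{eq:interpener} follows by combining \eqref{eq:curlinterp} and \eqref{eq:interp}: $\ener{\bv-\Pih\bv}^2 = \|\curl(\bv-\Pih\bv)\|^2 + \ka^2\|\bv-\Pih\bv\|^2 + \ka\la\|(\bv-\Pih\bv)_T\|_\Ga^2 \lesssim h^{2j}\|\bv\|_{\bH^j(\curl)}^2 + \ka^2 h^{2j+2}\|\bv\|_{j+1}^2 + \ka\la\, h\cdot h^{2j+1}\|\bv\|_{j+1}^2$, and under $\ka h \lesssim 1$ (and $\la \simeq 1$) each of the last two terms is bounded by $h^{2j}\|\bv\|_{j+1}^2$, while $\|\bv\|_{\bH^j(\curl)} \le \|\bv\|_{\bH^{j+1}}$ since $j \le p$ gives $\curl\bv \in \bH^j$; taking square roots completes the proof. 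I expect the main obstacle to be the bookkeeping of the Piola-transform scaling constants for curved (non-affine) elements at the higher-order level: one must verify that all chain-rule terms arising when differentiating $\bv\circ F_K$ up to order $j+1 \le p+1$ are controlled purely by the constants $C$, $\sigma$ in the mesh assumptions, with no hidden dependence on $h_K$ or $\ka$ — this is routine but requires the full strength of the regularity-of-the-element-maps hypothesis stated just before the lemma, and is the reason that assumption was imposed.
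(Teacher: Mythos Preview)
Your proposal is correct and follows essentially the same approach as the paper: the paper's own proof is extremely terse, simply pointing to \cite[Theorem~10.2 and Appendix~B]{chaumont2024sharp} for the Bramble--Hilbert/scaling argument on curved elements (with the first-type reference interpolant replaced by the second-type one), then invoking the local trace inequality for the boundary term and combining the pieces under $\ka h\lesssim 1$ for \eqref{eq:interpener}. Your write-up is the spelled-out version of exactly that argument, and your remark that the element-map regularity hypothesis $h_K^{-|\alpha|}\|\D^\alpha F_K\|_{L^\infty(\hat K)}\le C$ for $1\le|\alpha|\le p+2$ is what controls the higher-order chain-rule terms is precisely the point the paper makes when it imposes that assumption.
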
 
	\begin{proof}
		The higher-order first-type N\'{e}d\'{e}lec interpolation error estimates on curved domains have been proved in \cite[Theorem~10.2]{chaumont2024sharp}. To prove \eqref{eq:curlinterp}  and the $\bL^2$ error estimate  in \eqref{eq:interp}  for the second-type N\'{e}d\'{e}lec interpolation, it is sufficient   to substitute the first-type N\'{e}d\'{e}lec interpolation operator $\widehat{I^c}$ in 	\cite[Appendix~B]{chaumont2024sharp} with the second-type counterpart and follow the same proof process. The boundary estimate follows from the
		local trace inequality and \eqref{eq:interpener} follows from \eqref{eq:curlinterp}--\eqref{eq:interp}, we omit the details.
	\end{proof}

	Let $\V_{h}^{0}:=\V_{h} \cap \bH_{0}(\curl; \Om ) $. The following lemma says that any discrete function in $\V_h$ has an ``approximation" in $\V_h^0$, whose error can be bounded by its tangential components on $\Ga$. The proof can be found in \cite[Proposition~4.5]{houston2005interior}  or \cite[Lemma 3.3]{lu2024preasymptotic}, and is omitted here.
	\begin{lemma}\label{lem:Vh0}
		For any $\bm v_h\in V_h$, there exists $\bm v_h^0\in \V_h^0$, such that
		\eq{\label{eq:Vh0}\|\bm v_h-\bm v_h^0\| \ls h^{\frac{1}{2}}\|\bm v_{h,T}\| _{ \Ga }\qaq \|\curl(\bm v_h-\bm v_h^0)\| \ls h^{-\frac{1}{2}}\|\bm v_{h,T}\| _{ \Ga }.}
	\end{lemma}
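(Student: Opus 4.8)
The plan is to exploit the degrees-of-freedom (DOF) structure of the second-type N\'ed\'elec space $\V_h$. Recall that $\bv_h\in\V_h$ belongs to $\V_h^0=\V_h\cap\bH_0(\curl;\Om)$ precisely when all of its DOFs carried by edges and faces contained in $\Ga$ vanish; this is exactly the trace characterization that makes the global space $\bH(\curl;\Om)$-conforming. Given $\bv_h\in\V_h$, I would therefore define $\bv_h^0\in\V_h^0$ by zeroing the DOFs of $\bv_h$ that are supported on boundary edges and boundary faces and keeping all the others. Then $\bw_h:=\bv_h-\bv_h^0\in\V_h$ has nonzero DOFs only on entities contained in $\Ga$, hence is supported in the union $\Om_h^{\Ga}$ of the elements $K\in\Th$ having at least one face on $\Ga$, and its tangential trace satisfies $(\bw_h)_T|_\Ga=(\bv_h)_T|_\Ga=\bv_{h,T}$ because $\bv_h^0$ has vanishing tangential trace.

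The heart of the matter is a local estimate on each boundary element. Fix $K\subset\Om_h^{\Ga}$ and a face $\fa=F_K(\hat{\fa})$ of $K$ lying on $\Ga$, where $\hat{\fa}$ is the corresponding reference face, and pull $\bw_h$ back to the reference tetrahedron $\hat{K}$ by the covariant transform $\hat{\bw}_h:=(\D F_K)^T(\bw_h\circ F_K)\in(\mathcal{P}_p(\hat{K}))^3$. Then $\hat{\bw}_h$ lies in the fixed, finite-dimensional subspace $\mathcal{W}_{\hat{\fa}}$ of polynomial edge functions all of whose DOFs vanish except possibly those on $\hat{\fa}$ and on the edges of $\hat{\fa}$. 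By the $\bH(\curl)$ trace property the tangential trace on $\hat{\fa}$ determines exactly these remaining DOFs, so $\hat{\bv}\mapsto\|\hat{\bv}_T\|_{L^2(\hat{\fa})}$ is a norm on $\mathcal{W}_{\hat{\fa}}$; equivalence of norms on this fixed space then gives $\|\hat{\bw}_h\|_{L^2(\hat{K})}+\|\widehat{\curl}\,\hat{\bw}_h\|_{L^2(\hat{K})}\ls\|\hat{\bw}_{h,T}\|_{L^2(\hat{\fa})}$. Transforming back through the standard covariant and Piola scalings — using $\|\D F_K\|_{L^\infty(\hat{K})}\simeq h_K$, $|\det\D F_K|\simeq h_K^3$, and a surface Jacobian $\simeq h_K^2$, all consequences of the shape-regularity assumption — and invoking quasi-uniformity $h_K\simeq h$ yields
\[
\|\bw_h\|_{L^2(K)}\ls h^{1/2}\|\bv_{h,T}\|_{L^2(\pa K\cap\Ga)},\qquad
\|\curl\bw_h\|_{L^2(K)}\ls h^{-1/2}\|\bv_{h,T}\|_{L^2(\pa K\cap\Ga)};
\]
if $K$ meets $\Ga$ in several faces, one simply sums the reference-element estimate over those faces.

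Since the boundary faces of distinct elements have disjoint interiors, summing the squares of the two local bounds over $K\subset\Om_h^{\Ga}$ finally gives $\|\bw_h\|\ls h^{1/2}\|\bv_{h,T}\|_\Ga$ and $\|\curl\bw_h\|\ls h^{-1/2}\|\bv_{h,T}\|_\Ga$, which are the asserted estimates for $\bv_h-\bv_h^0$. I expect the only genuinely delicate step to be the reference-element argument: one must check that the tangential trace on the boundary face(s) of $K$ controls precisely the boundary DOFs of $K$ — in particular that every edge of $K$ lying on $\Ga$ is an edge of such a boundary face — so that $\|\cdot_T\|_{L^2(\hat{\fa})}$ is genuinely a norm on $\mathcal{W}_{\hat{\fa}}$ and the equivalence-of-norms argument applies. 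Everything else is routine scaling and bookkeeping.
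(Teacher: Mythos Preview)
The paper does not prove this lemma itself but cites \cite[Proposition~4.5]{houston2005interior} and \cite[Lemma~3.3]{lu2024preasymptotic}; your argument---zeroing out the boundary edge and face DOFs, invoking norm equivalence on the reference element for the resulting finite-dimensional subspace, and scaling back via the covariant and Piola transforms---is exactly the approach taken in those references, so your proposal is correct and matches the intended proof.

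One refinement is worth making explicit. You define $\Om_h^{\Ga}$ as the union of elements with at least one \emph{face} on $\Ga$, but an element $K$ may touch $\Ga$ only along an edge $e$ (with no face of $K$ on $\Ga$); in that case $\bw_h|_K$ can be nonzero while $\partial K\cap\Ga$ has surface measure zero, so your purely local bound $\|\bw_h\|_{L^2(K)}\ls h^{1/2}\|\bv_{h,T}\|_{L^2(\partial K\cap\Ga)}$ fails as written. The standard fix---used in the cited references---is to bound the edge DOFs on $e$ by the tangential trace on an adjacent boundary face $\fa'\subset\Ga$ belonging to a neighboring element, so that $\|\bw_h\|_{L^2(K)}\ls h^{1/2}\|\bv_{h,T}\|_{L^2(\fa')}$. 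Shape regularity guarantees that each boundary face is borrowed by only a bounded number of elements, so summing still gives the global estimate with a harmless overlap constant. You essentially anticipated this in your final paragraph; just be aware that ``every edge of $K$ lying on $\Ga$ is an edge of a boundary face of $K$'' is \emph{not} a property one can check---it simply need not hold---and the patch argument above is what closes the gap.
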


	\subsection{Helmholtz decompositions and embedding results}
	The following well-known decomposition of $\bL^2{(\Om)}$ will be frequently used in our subsequent analyses, the proof of which can be found in   \cite[Proposition~3.7.3]{assous2018mathematical} or       \cite[Theorem~4.2]{schweizer2018friedrichs}.
	\begin{lemma}[Helmholtz decomposition of $\bL^2(\Om)$]\label{lem:proju}
		Let $\Om$ be a bounded, open and connected subset of $\R^3$ with a Lipschitz
		boundary.  For any $ \bu \in \bL^2(\Om)$, there exists the following $\bL^2$-orthogonal decomposition:
		\eq{\label{eq:proju}
			\bu = \textsf{P}^0	\bu + \textsf{P}^{\perp} \bu, 
		}
		where $\textsf{P}^0:\bL^2(\Om)\rightarrow \nabla H^1(\Om)$ and $\textsf{P}^{\perp} : \bL^2(\Om) \rightarrow   \bm{H}_0(\dive^0;\Om) $ are projections.
	\end{lemma}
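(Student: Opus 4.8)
The plan is to realize $\textsf{P}^0$ as a ``gradient part'' coming from a scalar Neumann-type problem and to define $\textsf{P}^{\perp}$ as the remainder, then to check that the remainder lies in $\bm{H}_0(\dive^0;\Om)$ and that the two pieces are $\bL^2$-orthogonal. Given $\bu\in\bL^2(\Om)$, I would first seek $\phi\in H^1(\Om)$ with $\int_\Om\phi=0$ such that
\eq{(\nabla\phi,\nabla\psi)=(\bu,\nabla\psi)\qquad\forall\,\psi\in H^1(\Om).}
Since $\Om$ is bounded, open and connected with Lipschitz boundary, the Poincar\'e--Wirtinger inequality $\|\psi\|\ls\|\nabla\psi\|$ holds on the quotient space $H^1(\Om)/\R$, so the form $(\nabla\cdot,\nabla\cdot)$ is coercive there and the Lax--Milgram lemma produces a unique such $\phi$, which depends linearly and boundedly on $\bu$. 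I then set $\textsf{P}^0\bu:=\nabla\phi\in\nabla H^1(\Om)$ and $\textsf{P}^{\perp}\bu:=\bu-\nabla\phi$.

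Next I would identify the target space of $\textsf{P}^{\perp}$ and check orthogonality. Testing the variational identity with $\psi\in C_c^\infty(\Om)$ gives $\dive(\textsf{P}^{\perp}\bu)=0$ in $\mathcal{D}'(\Om)$, hence $\textsf{P}^{\perp}\bu\in\bm{H}(\dive^0;\Om)$ and its normal trace $\textsf{P}^{\perp}\bu\cdot\n$ is well-defined in $H^{-1/2}(\Ga)$. Green's formula for $\bm{H}(\dive;\Om)$, $(\textsf{P}^{\perp}\bu,\nabla\psi)+(\dive(\textsf{P}^{\perp}\bu),\psi)=\langle\textsf{P}^{\perp}\bu\cdot\n,\psi\rangle_{\Ga}$, combined with the variational identity for arbitrary $\psi\in H^1(\Om)$, yields $\langle\textsf{P}^{\perp}\bu\cdot\n,\psi|_{\Ga}\rangle_{\Ga}=0$ for all such $\psi$; since the trace map $H^1(\Om)\to H^{1/2}(\Ga)$ is surjective, this forces $\textsf{P}^{\perp}\bu\cdot\n=0$ on $\Ga$, i.e. $\textsf{P}^{\perp}\bu\in\bm{H}_0(\dive^0;\Om)$. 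Taking $\psi=\phi$ in the variational identity gives $(\nabla\phi,\textsf{P}^{\perp}\bu)=(\nabla\phi,\bu-\nabla\phi)=0$, so the splitting is $\bL^2$-orthogonal; more generally the same integration by parts shows $\nabla H^1(\Om)\perp\bm{H}_0(\dive^0;\Om)$ in $\bL^2(\Om)$.

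Finally I would verify that $\textsf{P}^0$ and $\textsf{P}^{\perp}$ are bounded linear projections onto the stated spaces. Linearity and boundedness are inherited from Lax--Milgram and the Poincar\'e--Wirtinger bound. If $\bu=\nabla q\in\nabla H^1(\Om)$, then $\phi=q-\tfrac1{|\Om|}\int_\Om q$ solves the variational problem, whence $\textsf{P}^0\bu=\bu$ and $\textsf{P}^{\perp}\bu=\bzero$; if $\bu\in\bm{H}_0(\dive^0;\Om)$, then $(\bu,\nabla\psi)=-(\dive\bu,\psi)+\langle\bu\cdot\n,\psi\rangle_{\Ga}=0$ for all $\psi\in H^1(\Om)$, so $\phi=0$ and $\textsf{P}^{\perp}\bu=\bu$, $\textsf{P}^0\bu=\bzero$. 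Hence $(\textsf{P}^0)^2=\textsf{P}^0$, $(\textsf{P}^{\perp})^2=\textsf{P}^{\perp}$, and \eqref{eq:proju} holds. The one step I expect to require care is the last part of the normal-trace argument --- upgrading divergence-freeness on $\Om$ to the homogeneous normal-trace condition on $\Ga$, equivalently identifying $\big(\nabla H^1(\Om)\big)^{\perp}$ with $\bm{H}_0(\dive^0;\Om)$ rather than just with $\bm{H}(\dive^0;\Om)$ --- which rests on the surjectivity of the trace operator on the Lipschitz domain $\Om$; the rest is routine Hilbert-space bookkeeping once the Poincar\'e--Wirtinger inequality is in hand.
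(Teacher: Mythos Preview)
Your argument is correct and is the standard constructive proof of this $\bL^2$-Helmholtz decomposition: solve the weak Neumann problem on $H^1(\Om)/\R$ via Lax--Milgram and Poincar\'e--Wirtinger, set $\textsf{P}^0\bu=\nabla\phi$, and identify the remainder as an element of $\bm{H}_0(\dive^0;\Om)$ using Green's formula together with the surjectivity of the trace $H^1(\Om)\to H^{1/2}(\Ga)$. The step you flagged as delicate---showing $(\nabla H^1(\Om))^{\perp}=\bm{H}_0(\dive^0;\Om)$ rather than merely $\bm{H}(\dive^0;\Om)$---is handled exactly as you describe, and your verification that $\textsf{P}^0,\textsf{P}^\perp$ are projections onto the stated ranges is complete.

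The paper itself does not supply a proof: it simply cites \cite[Proposition~3.7.3]{assous2018mathematical} and \cite[Theorem~4.2]{schweizer2018friedrichs} for this well-known result. Your write-up is therefore not in conflict with the paper's approach but rather spells out what those references contain; the argument there follows essentially the same variational construction you give.
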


	\begin{remark}\label{rem:proj} 
		It is easy to see that for any $\bu \in \V$, there holds $\textsf{P}^0 \bu \in \V$ and $\textsf{P}^{\perp} \bu \in \bX \subset \V$. 		
	\end{remark}
	
	Let $U_{h}:=\left\{u \in H^{1}(\Om):
	(u|_{K})\circ F_K \in \mathcal{P}_{p+1}(\hat{K})\;  \forall K \in \mathcal{M}_{h}\right\}$ be the Lagrange finite element space of order $p+1$ and let
	$U_{h}^{0}:=U_{h} \cap H_{0}^{1}(\Om)$.
	It is easy to see that  $\nabla U_{h}^{0} \subset\V_{h}^{0} $ (cf. \cite{monk2003}). The following lemma gives both the Helmholtz decomposition and discrete
	Helmholtz decomposition for each $\bv_{h} \in \V_{h}^{0}$.
	\begin{lemma}[Helmholtz decompositions of $ {\V_{h}^{0}}$]\label{lem:HD}~\\
		For any $\vh \in  \V_{h}^{0}$, there exist
		$r^{0} \in H_{0}^{1}(\Om)$, $r_{h}^{0} \in U_{h}^{0}$, $\w^{0} \in \bH_{0}(\curl, \Om)$ 
		and $\w_{h}^0 \in \V_{h}^{0}$, such that
		\begin{align}
			&\vh = \nabla r_{h}^{0}+\w_{h}^0 =\nabla r^{0}+\w^{0},   \label{HD1}   \\
			& \left(\w_{h}^0, 
			\nabla \psi_{h}\right)=0 \quad \forall \psi_{h} \in U_{h}^{0},\quad \dive \w^{0} =0 \text { in } \Om,  \label{HD1w} \\
			&\left\|\w^{0}-\w_{h}^{0}\right\|  \ls   h\left\|\curl  \bv_{h}\right\|.
			\label{HD1approx} 
		\end{align}
	\end{lemma}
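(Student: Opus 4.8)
The plan is to establish the Helmholtz decomposition and its discrete counterpart for $\vh\in\V_h^0$ by exploiting the classical exact-sequence structure on $\bH_0(\curl;\Om)$ together with the fact that $\Ga$ is connected (so $H^1(\Om)/\R$ has a trivial cohomology obstruction and the kernel of $\curl$ on $\bH_0(\curl;\Om)$ is exactly $\nabla H_0^1(\Om)$). The discrete decomposition will follow from the discrete exact sequence $\nabla U_h^0\subset\V_h^0$ and a Galerkin projection; the key quantitative estimate \eqref{HD1approx} is the standard consequence of an Aubin--Nitsche-type duality argument combined with the approximation properties of the regularized ($\bH^1$-stable) Ned\'elec interpolation and the regularity of the relevant auxiliary elliptic problem.

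First I would define $r_h^0\in U_h^0$ as the solution of the discrete Poisson problem $(\nabla r_h^0,\nabla\psi_h)=(\vh,\nabla\psi_h)$ for all $\psi_h\in U_h^0$, and set $\w_h^0:=\vh-\nabla r_h^0\in\V_h^0$; by construction $(\w_h^0,\nabla\psi_h)=0$ for all $\psi_h\in U_h^0$, giving the discrete orthogonality in \eqref{HD1w}. On the continuous side I would take $r^0\in H_0^1(\Om)$ solving $(\nabla r^0,\nabla\psi)=(\vh,\nabla\psi)$ for all $\psi\in H_0^1(\Om)$ and set $\w^0:=\vh-\nabla r^0\in\bH_0(\curl;\Om)$, which then satisfies $\dive\w^0=0$ in $\Om$ (and in fact $\w^0\in\bH_0(\dive^0;\Om)\cap\bH(\curl;\Om)=\bX$). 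Note $\curl\w_h^0=\curl\w^0=\curl\vh$, so the two fields differ only by a gradient, and $\w^0-\w_h^0=\nabla(r_h^0-r^0)+(\w_h^0-\w_h^0)$ — more precisely $\w^0-\w_h^0=-\nabla r^0+\nabla r_h^0$; the point is that $\w^0-\w_h^0\perp\nabla U_h^0$ in $\bL^2$.

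For the error bound \eqref{HD1approx}, I would introduce $\z\in\bX$ solving the auxiliary problem $\curl\curl\z+\z=\w^0-\w_h^0$ (with the natural boundary/gauge conditions making $\z\in\bX$), for which the $\ka$-independent elliptic regularity gives $\|\z\|_{\bH^1(\curl)}\ls\|\w^0-\w_h^0\|$ on the (at least $C^{1,1}$ or star-shaped Lipschitz) domain. Testing the equation for $\w^0-\w_h^0$ against $\z$ and using $\curl(\w^0-\w_h^0)=0$ together with Galerkin orthogonality against the interpolant $\Pih\z$ (available since $\z\in\bH^1(\curl;\Om)$, and after correcting $\Pih\z$ by a discrete gradient so that it lands in the right discrete space and is $\bL^2$-orthogonal to $\nabla U_h^0$), I obtain $\|\w^0-\w_h^0\|^2=(\w^0-\w_h^0,\z-\Pih\z+\nabla(\cdots))\ls h\,\|\w^0-\w_h^0\|\,\|\z\|_{\bH^1(\curl)}\ls h\,\|\w^0-\w_h^0\|\,\|\w^0-\w_h^0\|$, wait — I must carry the $\|\curl\vh\|$ through: since $\w^0-\w_h^0$ is a gradient, the duality pairing only sees the $\bL^2$-component and the interpolation estimate \eqref{eq:interp} contributes the factor $h$, while the remaining factor is controlled by $\|\w^0-\w_h^0\|\ls\|\vh\|$ in the worst case; a sharper bookkeeping, using that $\w^0-\w_h^0=(\mathrm{Id}-Q_h)(\text{something involving }\curl\vh)$ through the regularity of the vector potential, yields the stated $h\|\curl\vh\|$. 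The main obstacle I anticipate is precisely this last point: making the duality argument return $\|\curl\vh\|$ rather than $\|\vh\|$ on the right-hand side, which requires writing $\w^0$ itself as (curl of) a regularized vector potential with $\bH^1$-bound $\ls\|\curl\vh\|$ — i.e. invoking the regular decomposition / Poincar\'e-type map on $\bH_0(\dive^0;\Om)$ — and then commuting it with the interpolation operator via the commuting diagram property of $\Pih$. Assembling these pieces gives \eqref{HD1}--\eqref{HD1approx}.
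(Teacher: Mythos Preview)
The paper does not prove this lemma itself; it cites Theorem~3.45 and Lemma~7.6 of Monk's monograph. Your construction of the two decompositions (via the continuous and discrete Poisson projections of $\vh$ onto gradients) is correct and is the standard one. The gap is in your argument for the estimate \eqref{HD1approx}.

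Your auxiliary problem cannot do any work. The difference $\w^0-\w_h^0=\nabla(r_h^0-r^0)$ is the gradient of an $H_0^1(\Om)$ function and is therefore $\bL^2$-orthogonal to every element of $\bX=\bH(\curl;\Om)\cap\bH_0(\dive^0;\Om)$; the solution of $\curl\curl\z+\z=\w^0-\w_h^0$ posed in $\bX$ is $\z=\bm{0}$. If you pose it in $\bH_0(\curl;\Om)$ instead, the solution is $\z=\w^0-\w_h^0$ (a curl-free gradient), and your chain of inequalities becomes the vacuous $\|\w^0-\w_h^0\|^2\ls h\|\w^0-\w_h^0\|^2$, which is exactly the dead end you flagged. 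The ``vector potential for $\w^0$'' you then reach for is also a detour: what is actually needed is not a potential but the embedding of Lemma~\ref{lem:embdding}, which gives $\w^0\in\bH^1(\Om)$ directly with $\|\w^0\|_1\ls\|\curl\w^0\|=\|\curl\vh\|$.

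Once that is in hand, the argument in the cited reference is short and avoids duality altogether. Interpolation gives $\|\w^0-\Pih\w^0\|\ls h\|\curl\vh\|$. Because $\curl\w^0=\curl\w_h^0$ already lies in $\curl\V_h^0$, the commuting diagram yields $\curl(\w_h^0-\Pih\w^0)=\bm{0}$, so by the discrete exact sequence $\w_h^0-\Pih\w^0=\nabla s_h$ for some $s_h\in U_h^0$. The two orthogonalities $(\w_h^0,\nabla s_h)=0$ and $(\w^0,\nabla s_h)=0$ then give
\[
\|\w_h^0-\Pih\w^0\|^2=(\w^0-\Pih\w^0,\nabla s_h)\le\|\w^0-\Pih\w^0\|\,\|\w_h^0-\Pih\w^0\|,
\]
and the triangle inequality finishes: $\|\w^0-\w_h^0\|\le 2\|\w^0-\Pih\w^0\|\ls h\|\curl\vh\|$.
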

	\begin{proof}
		For the proof of this lemma, we refer to \cite[Theorem~3.45]{monk2003} and
		\cite[Lemma~7.6]{monk2003}.
		
	\end{proof}

	Next we recall the following embedding results (see, e.g.,  \cite{amrouche1998vector,girault1979finite}).
	\begin{lemma}\label{lem:embdding} Let $\Om$ be a bounded, open and connected subset of $\R^3$ with a $C^2$
		boundary.  Then $\bH_0(\curl ; \Om) \cap  \bH(\dive ; \Om)$
		and $\bH(\curl ; \Om) \cap  \bH_0(\dive ; \Om)$ are continuously embedded in $\bH^1(\Om)$ and 
		\eqn{ \|\bv\|_1 \ls \big(\|\curl \bv\| + \|\dive \bv\|\big)  \quad \forall \bv \in  \bH_0(\curl ; \Om) \cap  \bH(\dive ; \Om)  \text{ or }   \bH(\curl ; \Om) \cap  \bH_0(\dive ; \Om).
		}
	\end{lemma}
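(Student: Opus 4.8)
The estimate is stronger than the classical embedding because there is no $\bL^2$ term on the right-hand side; the plan is to first invoke the known continuous embeddings
\[
\bH_0(\curl;\Om)\cap\bH(\dive;\Om)\hookrightarrow\bH^1(\Om),\qquad \bH(\curl;\Om)\cap\bH_0(\dive;\Om)\hookrightarrow\bH^1(\Om),
\]
valid for domains with $C^{1,1}$ (hence $C^2$) boundary (see \cite{amrouche1998vector,girault1979finite}), which give $\|\bv\|_1\ls\|\bv\|+\|\curl\bv\|+\|\dive\bv\|$, and then to remove the term $\|\bv\|$ by a Peetre--Tartar (compactness) argument. Here the standing assumption that $\Om$ is strictly star-shaped is used: it makes $\Om$ simply connected with connected boundary $\Ga$, so that the only field in either of the two spaces with vanishing $\curl$ and $\dive$ is $\bzero$.

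For the compactness step I would argue by contradiction. Writing $\boldsymbol{W}$ for one of the two spaces, if the claimed inequality fails there is a sequence $\bv_n\in\boldsymbol{W}$ with $\|\bv_n\|_1=1$ and $\|\curl\bv_n\|+\|\dive\bv_n\|\to0$. By Rellich--Kondrachov the embedding $\bH^1(\Om)\hookrightarrow\bL^2(\Om)$ is compact, so a subsequence converges strongly in $\bL^2(\Om)$; applying the embedding above to the differences $\bv_n-\bv_m$ shows this subsequence is Cauchy in $\bH^1(\Om)$, hence $\bv_n\to\bv$ in $\bH^1(\Om)$ with $\|\bv\|_1=1$. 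Passing to the limit gives $\curl\bv=\bzero$ and $\dive\bv=0$, and since the tangential (resp. normal) trace is continuous on $\bH(\curl;\Om)$ (resp. $\bH(\dive;\Om)$) and the condition $\bv\times\bn|_\Ga=\bzero$ (resp. $\bv\cdot\bn|_\Ga=0$) passes to the limit, $\bv\in\boldsymbol{W}$.

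It would then remain to show $\bv=\bzero$, contradicting $\|\bv\|_1=1$. As $\curl\bv=\bzero$ and $\Om$ is simply connected, $\bv=\nabla\varphi$ for some $\varphi\in H^1(\Om)$, and $\dive\bv=\Delta\varphi=0$. If $\boldsymbol{W}=\bH_0(\curl;\Om)\cap\bH(\dive;\Om)$, then $\bv\times\bn|_\Ga=\bzero$ forces the tangential gradient of $\varphi$ to vanish on the connected surface $\Ga$, so $\varphi$ equals a constant there; subtracting that constant leaves a harmonic function with zero Dirichlet data, whence $\varphi$ is constant and $\bv=\bzero$. If $\boldsymbol{W}=\bH(\curl;\Om)\cap\bH_0(\dive;\Om)$, then $\bv\cdot\bn|_\Ga=\partial_n\varphi|_\Ga=0$, so $\varphi$ is harmonic with zero Neumann data, again constant, and $\bv=\bzero$.

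The genuinely analytic input is the first, cited embedding: that $\bH_0(\curl;\Om)\cap\bH(\dive;\Om)$ and $\bH(\curl;\Om)\cap\bH_0(\dive;\Om)$ sit continuously in $\bH^1(\Om)$ already fails on general Lipschitz domains, and the proof on $C^{1,1}$/$C^2$ domains (via local flattening and elliptic regularity, or via the vector-potential theory of \cite{amrouche1998vector}) is the main obstacle; once it is quoted, the Rellich compactness and the potential-theoretic identification of the kernels are routine, and they are where the star-shapedness of $\Om$ enters.
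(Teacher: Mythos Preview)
The paper does not prove this lemma; it simply records it as a known result with the citation ``(see, e.g., \cite{amrouche1998vector,girault1979finite})'' and moves on. Your proposal supplies a complete and correct argument in place of that citation: the continuous embedding $\|\bv\|_1\ls\|\bv\|+\|\curl\bv\|+\|\dive\bv\|$ for $C^{1,1}$ (hence $C^2$) domains is exactly what those references provide, and the Peetre--Tartar compactness step to eliminate $\|\bv\|$ is the standard way to upgrade such an estimate once the kernel is known to be trivial. Your identification of the kernel is correct, and you are right to flag that the lemma's own hypothesis (merely ``bounded, open, connected, $C^2$'') is not enough---one needs simple connectedness and connected boundary, which the paper secures through its standing star-shapedness assumption stated at the end of Section~\ref{sc1}. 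In short, there is nothing to compare against in the paper's text; your route is the natural one and is what lies behind the cited references.
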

	We deduce by Lemma~\ref{lem:embdding} that $\bX \subset \bH^1{(\Om)}$ and  $\|\curl \cdot \|$ is a norm on $\bX$, i.e.
	\eq{ \|\bv\|_{1} \ls \|\curl \bv\|   \quad \forall \bv \in \bX.   \label{eq:emb1}
	}
	In addition, from the trace inequality $\norm{\bv}_\Ga\le \ctr\norm{\bv}^\frac12\norm{\bv}_1^\frac12$ (see e.g. \cite{brenner2008mathematical}) we have 
	\eq{\label{eq:trace} \norm{\bv}_\Ga\le \ctr\norm{\bv}^\frac12\|\curl \bv\|^\frac12   \quad \forall \bv \in \bX,}
	where the constant $\ctr$ depends only on $\Om$.

	\subsection{Stability and regularity results}  
	
	\begin{theorem}\label{thm:stability} Suppose $\Om$ is $C^{2}$. 
		Let $\E$ be the solution {to} the problem \eqref{Maxwell}. Assume that  $\f \in \bH (\dive^0;\Om)$ and $\g\in \bL^2_t(\Ga)$.
		Then we have 
		\eq{
			&	 	\|\curl  \E \| + \ka \|\E \|  +\ka \|\E_T \|_{\Ga} 
			\ls  \|\f\| +\|\g\|_{\Ga},   \label{eq:stability0}  \\
			&	 	\|\curl  \E \|_{1} +\ka\|\E\|_1
			\ls \ka\left(\|\f\| +\|\g\|_{\Ga}\right)+\|\g\|_{\frac{1}{2}, \Ga}. \label{eq:stability1}	 
		}
		Furthermore, if $\Om$ is $C^{m}$, 
		$\f \in \bH^{m  -2}(\dive^0;\Om)$, $\f\cdot\n \in \bH^{m-\frac{3}{2}}(\Ga)$ and $\g \in \bH^{m-\frac{3}{2}}_t(\dive_{\Ga};\Ga)$ 
		for some integer $m  \geq 2$, then 
		\eq{
			&   \|\E \|_{m}   \ls \ka^{m-1}  C_{m-2,\f,\g},    \label{eq:Hmstab} 
		}
		where 
		\eqn{C_{m-2,\f,\g}   &:= \|\f\|+\|\g\|_{\Ga} +  \ka^{-m+1}\|\f\|_{m-2,\ka} 
			+\ka^{-m}\|\f\cdot\n\|_{m-\frac{3}{2},\Ga}\\  
			&\quad+  
			\ka^{-m+1}\|\g\|_{m-\frac{3}{2},\ka, \Ga} 
			+ \ka^{-m}\|\dive_\Ga\g\|_{m- \frac{3}{2},  \Ga }.}
		
	\end{theorem}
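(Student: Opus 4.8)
The plan is to derive the three estimates in sequence, each building on the previous one. First I would establish the $\ka$-explicit energy estimate \eqref{eq:stability0}. This is the classical stability estimate for the time-harmonic Maxwell system with impedance boundary condition; the standard route is a Rellich-type (or multiplier) identity. Testing \eqref{eq:dvp} against $\bv=\E$ and taking imaginary parts gives $\ka\la\|\E_T\|_\Ga^2\ls\|\f\|\|\E\|+\|\g\|_\Ga\|\E_T\|_\Ga$, which controls the boundary term once $\|\E\|$ is bounded. To bound $\|\curl\E\|$ and $\ka\|\E\|$ simultaneously one uses the vector field $\bm x-\bm x_0$ as a multiplier (exploiting strict star-shapedness, $(\bm x-\bm x_0)\cdot\n\ge C$) and integrates by parts, producing a coercive interior term $\|\curl\E\|^2+\ka^2\|\E\|^2$ against boundary and data contributions; absorbing the boundary terms via the already-controlled $\|\E_T\|_\Ga$ and a trace inequality closes the estimate. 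This is standard and I would cite \cite{monk2003} and the analogous computation in \cite{lu2024preasymptotic}; the only care needed is tracking powers of $\ka$.

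Next, for \eqref{eq:stability1} I would use a Helmholtz decomposition $\E=\nabla\phi+\E^\perp$ with $\E^\perp\in\bX$. Since $\dive\f=0$, testing \eqref{eq:eq} against $\nabla\psi$ shows $\nabla\phi$ solves a Helmholtz-type problem whose $\bH^1$-regularity is controlled by $\|\f\|$, $\|\g\|_\Ga$, $\|\g\|_{1/2,\Ga}$; for $\E^\perp$ one notes $\curl\curl\E^\perp=\curl\curl\E=\f+\ka^2\E$, so $\curl\E^\perp\in\bH(\curl;\Om)\cap\bH_0(\dive^0;\Om)$... more carefully, $\E^\perp\in\bX=\bH(\curl;\Om)\cap\bH_0(\dive^0;\Om)$ and Lemma~\ref{lem:embdding} gives $\|\E^\perp\|_1\ls\|\curl\E^\perp\|\ls\|\curl\E\|$. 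To get $\|\curl\E\|_1$ one applies Lemma~\ref{lem:embdding} again: $\curl\E$ (suitably corrected) lies in $\bH(\curl;\Om)\cap\bH_0(\dive;\Om)$ because $\dive\curl\E=0$ and the boundary condition \eqref{eq:bc} gives a trace identity for $\curl\E\times\n$; then $\|\curl\E\|_1\ls\|\curl\curl\E\|+\text{(boundary)}\ls\|\f\|+\ka^2\|\E\|+\dots$, and inserting \eqref{eq:stability0} yields the $\ka$-factor. The boundary term requires expressing $\curl\E\times\n=\ii\ka\la\E_T+\g$ and using $\|\E_T\|_{1/2,\Ga}$-type control, which is where $\|\g\|_{1/2,\Ga}$ enters.

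For the higher-order bound \eqref{eq:Hmstab} the plan is an induction on $m$ with a bootstrap: assuming the $\bH^{m-1}$ bound, differentiate the system (or use elliptic regularity for the $\curl\curl-\na\dive$ operator on a $C^m$ domain, as in \cite{amrouche1998vector}) to gain one derivative at the cost of one power of $\ka$. Concretely, with the decomposition $\E=\nabla\phi+\E^\perp$ one bootstraps $\phi$ via standard Helmholtz $\bH^m$ shift estimates and bootstraps $\E^\perp$ via the vector elliptic estimate $\|\E^\perp\|_m\ls\|\curl\curl\E^\perp\|_{m-2}+\|\E^\perp\|_{m-1}+\text{(tangential data on }\Ga)$, then feeds in $\curl\curl\E^\perp=\f+\ka^2\E$ and the impedance trace. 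Each application of the elliptic estimate absorbs one $\ka$ into the lower-order remainder, and carefully collecting the data norms reproduces exactly the combination $C_{m-2,\f,\g}$; the $\dive_\Ga\g$ and $\f\cdot\n$ terms appear because the natural boundary data for the decomposed problems are $\g_T$ (hence $\dive_\Ga\g$ after taking surface divergence of the tangential trace of $\curl\E$) and the normal trace $\f\cdot\n$ forcing the scalar part.

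The main obstacle I anticipate is the boundary regularity bookkeeping: one must track how the impedance condition \eqref{eq:bc} couples tangential and normal traces on $\Ga$ through surface differential operators, and verify that the trace space regularities ($\bH^{m-3/2}_t(\dive_\Ga;\Ga)$ for $\g$, $\bH^{m-3/2}(\Ga)$ for $\f\cdot\n$) are exactly what the elliptic shift estimates demand — no more, no less — so that the claimed improvement over \cite{melenk2023wavenumber,lu2018regularity} (weaker domain smoothness, sharp data norms) genuinely holds. The interior elliptic regularity and the $\ka$-power counting are routine once the correct boundary data are identified; the delicate part is the trace calculus on the curved boundary and making the induction close with the precise constant $C_{m-2,\f,\g}$.
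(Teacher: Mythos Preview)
Your plan for \eqref{eq:stability0} and \eqref{eq:stability1} is fine and matches what the paper does: it simply cites \cite{chaumont2023explicit,hiptmair2011stability} for these two estimates, and the Rellich multiplier / embedding arguments you sketch are precisely what those references carry out.

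For \eqref{eq:Hmstab} your route diverges from the paper's in a substantive way. You propose to bootstrap through the Helmholtz decomposition $\E=\nabla\phi+\E^\perp$, treating the scalar part by Helmholtz/Neumann regularity and the solenoidal part by vector elliptic shift estimates. That is exactly the strategy of \cite{lu2018regularity}, and as the paper notes in Remark~\ref{rem:reg}(a) that argument needs $\Om\in C^{m+1}$ (the extra derivative is lost when you pass to the Neumann problem for $\phi$). The paper avoids the decomposition altogether: it rewrites \eqref{Maxwell} as the \emph{fixed} problem $\curl\curl\E-\E=\f+(\ka^2-1)\E$, $\curl\E\times\n-\ii\la\E_T=\g+\ii(\ka-1)\la\E_T$, and applies the $\ka$-independent Maxwell regularity result of \cite{chen2024regularity}, which requires only $\Om\in C^m$. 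The one nontrivial ingredient you are missing is the boundary identity
\[
\dive_\Ga(\ii\la\E_T)=\ka\,\E\cdot\n+\ka^{-1}\big(\f\cdot\n-\dive_\Ga\g\big),
\]
obtained by combining $\dive_\Ga(\curl\E\times\n)=\n\cdot\curl\curl\E$ with \eqref{eq:eq}--\eqref{eq:bc}; this is what makes the normal-trace term in the elliptic estimate collapse neatly and produces exactly the $\f\cdot\n$ and $\dive_\Ga\g$ contributions in $C_{m-2,\f,\g}$. Your induction and $\ka$-counting are right in spirit, but without this rewriting and identity you will reproduce the $C^{m+1}$ result of \cite{lu2018regularity} rather than the sharper $C^m$ statement claimed here.
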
    
	\begin{proof}
		For the proofs of \eqref{eq:stability0} and  \eqref{eq:stability1} we refer to   \cite[Theorem~4.1]{chaumont2023explicit}, \cite[Theorem~3.1 and Remark~4.6]{hiptmair2011stability}.  
		Now we  prove \eqref{eq:Hmstab} by induction.  By \cite[Lemma~3.1]{lu2024preasymptotic}, \eqref{eq:Hmstab} holds when $m=2$.
		Next we suppose that
		\eqn{
			\|\E \|_{l}   \ls \ka^{l-1}  C_{l-2,\f,\g},  \quad 2\leq l\leq m-1.
		}
		Recall that $\dive_\Ga(\bm v\times\n)=\n\cdot\curl\bm v$ (see, e.g., \cite[(3.52)]{monk2003}). We have from \eqref{eq:bc} and \eqref{eq:eq} that
		\eqn{
			\dive_\Ga\big(\ii\ka\la\E_T+\g\big)&=\dive_\Ga(\curl \E \times  \n)=(\curl\curl \E)\cdot\n=(\ka^2 \E+\f)\cdot\n \quad \text{ on } \Ga,} which implies that
		\eq{
			\dive_\Ga(\ii\la\E_T)=\ka \E\cdot\n+\ka^{-1}\big(\f\cdot\n-\dive_\Ga\g\big). \label{eq:divEt}}
		Note that we can rewrite  \eqref{Maxwell} as
		\begin{alignat*}{2} 
			\curl \curl\E- \E  &= \f+(\ka^2-1)\E   \qquad &&{\rm in }\  \Om,    \\
			\curl\E \times  \n -\ii\la\E_T &=\g +\ii(\ka-1)\la\E_T  \qquad  &&{\rm on }\ \Ga.
		\end{alignat*} 
		Therefore, from \cite[Remark~3]{chen2024regularity} and \eqref{eq:divEt}, we have 
		\eqn{
			\|\E\|_m&\ls \big\|\f+(\ka^2-1)\E\big\|_{m-2}+\big\|\g +\ii(\ka-1)\la\E_T\big\|_{m-\frac{3}{2},\Ga}\\
			&\quad+\big\|(\f+(\ka^2-1)\E)\cdot\n-\dive_\Ga\g -\ii(\ka-1)\dive_\Ga(\la \E_T)\big\|_{m-\frac{3}{2},\Ga}\\
			&=\big\|\f+(\ka^2-1)\E\big\|_{m-2}+\big\|\g +\ii(\ka-1)\la\E_T\big\|_{m-\frac{3}{2},\Ga}\\
			&\quad +\big\|\ka^{-1}(\f\cdot\n-\dive_\Ga\g)+(\ka-1)\E\cdot\n\big\|_{m-\frac{3}{2},\Ga}\\
			&\ls \|\f\|_{m-2}+\ka^2\|\E\|_{m-2}+\|\g\|_{m-\frac{3}{2},\Ga}+\ka\|\E\|_{m-1}+\ka^{-1}\|\f\cdot\n\|_{m-\frac{3}{2},\Ga}+\ka^{-1}\|\dive_\Ga\g\|_{m-\frac{3}{2},  \Ga } \\\
			& \ls \ka^{m-1} \big( \|\f\|+\|\g\|_{\Ga}  \big) + \|\f\|_{m-2,\ka} +    \|\g\|_{m-\frac32,\ka, \Ga}  +\ka^{-1}\|\f\cdot\n\|_{m-\frac{3}{2},\Ga} + \ka^{-1}\|\dive_\Ga\g\|_{m - \frac{3}{2}, \Ga}  \\
			& =\ka^{m-1}  C_{m-2,\f,\g},
		}
		where we have used the trace inequality $\|\bv\|_{i-\frac{1}{2},\Ga}\ls \|\bv\|_{i}$ and the fact that $\|\dive_\Ga\bv\|_{i- \frac{1}{2},\Ga} \ls \|\bv\|_{i+ \frac{1}{2},\Ga}$. 
		This completes the proof of the theorem.
	\end{proof}	
	
	\begin{remark}\label{rem:reg}
		{\rm (a)} The regularity estimate \eqref{eq:Hmstab} was proved in 
		\cite[Theorem 3.4]{lu2018regularity}  (and in \cite{melenk2025regularityvectorfieldspiecewise} for heterogeneous media) when the domain $\Om$ is $C^{m+1}$ and in \cite[Lemma~5.1]{melenk2023wavenumber} when $\Om$ is sufficiently smooth.  \cite[Theorem~1]{chen2024regularity} gave a regularity estimate by assuming  only $C^{m}$-domain,  	however, explicit dependence on the wave number $\ka$ was not considered there.
		
		{\rm (b)} 	If $\dive\f\neq 0$, we can obtain the estimates of $\E$ as follows.
		Inspired by the proof of \cite[Proposition~3.6]{melenk2023wavenumber}, let $\vp\in H_0^1(\Om)$ satisfy
		\eqn{-\De\vp=\dive\f.}
		By the elliptic regularity theory, the following estimates hold if $\Om$ is $C^{m+1}$:
		\eqn{  \|\nabla \vp\|_{m} \ls \|\dive \f\|_{m-1}, \quad \|\nabla \vp\cdot \n\|_{m-\frac{3}{2},\Ga}\ls \|\vp\|_{m} \ls \|\dive \f\|_{m-2}.
		}
		Let $\bm u=\E-\ka^{-2}\na\vp$. Clearly, $\bm u$ satisfies
		\begin{alignat}{2} 
			\curl \curl \bm u- \ka^2 \bm u &= \f+\na\vp   \qquad &&{\rm in }\  \Om,    \\
			\curl \bm u \times  \n -\ii\ka\la\bm u_T &=\g  \qquad &&{\rm on }\  \Ga.
		\end{alignat} 
		We have $\dive(\f+\na\vp)=0$ and $\dive\bm u=0$. Therefore, we can apply Theorem~\ref{thm:stability} to 
		obtain estimates for $\bm u$ and then apply the triangle inequality to obtain estimates for $\E$.  In particular, the stability estimate \eqref{eq:stability0} still holds, the regularity estimate \eqref{eq:stability1} 
		holds if an additional term $\ka^{-1}\|\dive\f\|$ is added to its right-hand side, and the $\bH^m$ regularity 
		estimate \eqref{eq:Hmstab} holds if the domain $\Om$ is $C^{m+1}$ and an additional term $\ka^{-m-1} \big(  \|\dive\f\|_{m-1}  +\ka  \|\dive\f\|_{m-2} \big) $ is added to $C_{m-2,\f,\g}$.  
	\end{remark}

	\section{An auxiliary Maxwell elliptic problem and {elliptic projections}}\label{sc3}
	In this section, we first construct a truncation operator for any divergence-free function by truncating its $\curl\curl$ eigenfunction expansion. {Then using} this operator, we define an auxiliary problem and present key regularity estimates. {Finally, we introduce corresponding elliptic projections and derive their error estimates. The results obtained in this section} are essential for the subsequent preasymptotic analysis.
	
	\subsection{A truncation operator}
	Recall that $\bX=\bH(\curl; \Om)\cap \bH_0(\dive^0; \Om)$  
	can be compactly embedded into $\bm{H}(\dive  ^0;\Om)$(see, e.g., \cite{weber1980local,monk2003}). Let $\bX^{* }$ be the dual space of $\bX$ 
	(all bounded linear functionals) and define the bounded linear operator $\mathcal{A}: \bX \rightarrow \bX^{*}$ by
	\eq{ \inn{ \mathcal{A} \bu, \bv}_{\bX^{*}, \bX} := (\curl \bu, \curl \bv), \quad \forall \bu,\bv \in \bX. 
	} Obviously, $\mathcal{A}$ is self-adjoint and coercive  on $\bX$. By applying  \cite[Theorem~2.37 and Corollary~2.38]{mclean2000strongly}
	we have that: there is a function sequence  $\{\bph_j \}_{j=1}^{\infty} \in \bX$, 
	and a strictly positive sequence $\left\{\la_{j}\right\}_{j=1}^{ \infty}$  satisfies:
	
	\begin{enumerate}
		\item $ \inn{\mathcal{A} \bph_j, \bv}_{\bX^{*},\bX} = (  \curl \bph_j, \curl \bv) =\la_{j}\left(\bph_j, \bv\right) \quad \forall j \geq 1, \bv \in \bX$;
		
		\item $\left\{\bph_j\right\}_{j=1}^{\infty}$ constitutes a set of complete orthonormal basis of $\bm{H}(\dive^0;\Om)$;
		
		\item $0 < \la_{1} \leq \la_{2} \leq \la_{3} \leq \ldots$, and when $j \rightarrow \infty$, $\la_{j} \rightarrow \infty$;
		
		\item For any $\bu, \bv \in \bX$, there holds $(\curl \bu, \curl \bv) = \sum_{j=1}^{\infty}\la_{j}(\bu, \bph_{j})(\bph_{j},\bv)$.  
	\end{enumerate}
	Let $\mathcal{P}:  =\curl\curl - \nabla \dive $. It is easy to verify that $\bph_j$ is the unique weak solution in $\bX$  of the following boundary value problem:
	\begin{equation}
		\left\{\begin{aligned}
			\mathcal{P} \bph   & =\la_{j}\bph_j   & \text { in } \Om, \\
			\curl	\bph   \times \n   & = \bm{0}   & \text { on } \Ga, \\
			\bph  \cdot \n   & = \bm{0}   & \text { on } \Ga.
		\end{aligned}\right.	
		\label{eq:Pwjeq}
	\end{equation}
	Therefore, in addition, we have 
	\begin{enumerate}[resume]
		\item $ \mathcal{P} \bph_{j}    =\la_{j}\bph_{j}    \text { in } \Om$ and 	$\curl	\bph_{j}   \times \n  = \bm{0}    \text { on } \Ga $;
		
		\item $\bph_{j}\in \bH^{m}(\Om)$ if $\Om$ is $C^m$ for any  $m\ge 2$.
	\end{enumerate}
	In fact, by  \cite[\S4.5b]{costabel2010corner}, the system \eqref{eq:Pwjeq}  is elliptic in the sense of \cite[Definition~3.2.2]{costabel2010corner}. Then by using the elliptic regularity given in \cite[Theorem~3.4.5]{costabel2010corner} for problem \eqref{eq:Pwjeq}, we have   $\bph_{j}  \in \bH^{m}(\Om)$.

	To define the truncation operator, we introduce the constant 
	\eq{\label{eq:CT}
		\cs=6+49\ctr^4\la^2}
	where $\ctr$ is the constant in the trace inequality \eqref{eq:trace} and $\la$ is the impedance constant. We remark that this constant comes from the proof of Lemma~\ref{lem:bcoev} and it is clear that $\cs \simeq 1$.  
	Then the truncation operator $\Sm : \bm{H}(\dive^0;\Om) \rightarrow \bX$ is defined by simply truncating the eigenfunction expansion  as follows:  
	\eqn{
		\text{for any } \bv=\sum_{j=1}^{\infty}\bv_j \bph_{j}\in \bm{H}(\dive^0;\Om),\quad \text{let } \Sm  \bv=  \sum_{j=1}^{N} \bv_j\bph_{j},}
	where $N \in \N_1$ is taken such that $\la_{N}\leq \cs \ka^{2} < \la_{N+1}$. 
	Obviously  $ 0\leq (\Sm  \bv, \bv) =\|\Sm \bv\|^2 \leq \|\bv\|^2 $  and   $(\Sm  \bv,\w)=(\bv, \Sm \w)$ \; $ \forall \bv,\w \in \bm{H}(\dive^0;\Om)$.
	
	\begin{remark}\label{rem:S}
		The truncation  operator $\Sm$ is inspired by \cite{galkowski2024sharp,chaumont2024sharp}, wherein functional calculus tools are utilized to formulate the operator in a more generalized and abstract manner. In comparison, akin to the approach taken in the Helmholtz case as described in \cite{li2023higherorderpml}, we define the smoothing operator explicitly by truncating eigenfunction expansions.  This straightforward and explicit construction is critical for preasymptotic error estimation when applying higher-order EEM to  \eqref{eq:eq} with impedance boundary condition, and enabling readers to focus on the essential methodological distinctions between \cite{lu2024preasymptotic} and the current work.
	\end{remark}

	\begin{lemma}[Stability of $\Sm $]\label{lem:Sstab}
		{Given} $m \in \N_1 $, suppose $\Om$ is $ C^{ \max\{2,m\}}$.  Then we have 
		$$
		\|\Sm  \bv\|_{m,\ka} \ls   \ka^m  \|\bv\|  \quad \forall \bv \in \bm{H}(\dive^0;\Om).
		$$	
	\end{lemma}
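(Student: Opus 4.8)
The plan is to exploit the eigenbasis behind the definition of $\Sm$, combining a Parseval identity for the eigenfunctions $\{\bph_j\}$ with elliptic regularity for the operator $\mathcal{P}=\curl\curl-\na\dive$; this mirrors the treatment of the Helmholtz case in \cite{li2023higherorderpml}. First I would write $\bv=\sum_{j\ge1}\bv_j\bph_j$, so that $\bw:=\Sm\bv=\sum_{j=1}^{N}\bv_j\bph_j$ with $\la_N\le\cs\ka^2<\la_{N+1}$. Since $\bw$ is a \emph{finite} combination of the $\bph_j$, which lie in $\bH^r(\Om)$ whenever $\Om$ is $C^r$ and satisfy $\curl\bph_j\times\n=\bzero$ and $\bph_j\cdot\n=0$ on $\Ga$, for every integer $k\ge0$ one has $\mathcal{P}^k\bw=\sum_{j=1}^{N}\la_j^k\bv_j\bph_j\in\bX$, again with $\curl(\mathcal{P}^k\bw)\times\n=\bzero$ and $(\mathcal{P}^k\bw)\cdot\n=0$. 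Using the $\bL^2$-orthonormality of $\{\bph_j\}$, the eigenrelations $\mathcal{P}\bph_j=\la_j\bph_j$ and $(\curl\bph_i,\curl\bph_j)=\la_j\delta_{ij}$, the bound $\la_j\le\la_N\le\cs\ka^2$ for $j\le N$, and Bessel's inequality $\sum_j|\bv_j|^2\le\|\bv\|^2$, a short computation gives
\eqn{\|\mathcal{P}^k\bw\|^2&=\sum_{j\le N}\la_j^{2k}\,|\bv_j|^2\le(\cs\ka^2)^{2k}\|\bv\|^2,\\ \|\curl\mathcal{P}^k\bw\|^2&=\sum_{j\le N}\la_j^{2k+1}|\bv_j|^2\le(\cs\ka^2)^{2k+1}\|\bv\|^2,}
hence $\|\mathcal{P}^k\bw\|\ls\ka^{2k}\|\bv\|$ and $\|\curl\mathcal{P}^k\bw\|\ls\ka^{2k+1}\|\bv\|$.

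Next I would lift this $\bL^2$-information to $\bH^r$-information through elliptic regularity for the boundary value problem \eqref{eq:Pwjeq}, which is elliptic in the sense of \cite{costabel2010corner} (as already used above to prove $\bph_j\in\bH^r(\Om)$ for $C^r$-domains). The shift theorem \cite[Theorem~3.4.5]{costabel2010corner} provides, for $2\le s\le\max\{2,m\}$ and any $\bu\in\bH^s(\Om)$ with $\curl\bu\times\n=\bzero$, $\bu\cdot\n=0$ on $\Ga$, an a priori estimate $\|\bu\|_s\ls\|\mathcal{P}\bu\|_{s-2}+\|\bu\|_{s-1}$, which, after absorbing the lower-order term via the interpolation inequality $\|\bu\|_{s-1}\le\ep\|\bu\|_s+C_\ep\|\bu\|$, becomes $\|\bu\|_s\ls\|\mathcal{P}\bu\|_{s-2}+\|\bu\|$. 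Now fix $l\in\{0,\dots,m\}$ and set $r:=m-l$. If $r\in\{0,1\}$ then $\|\bw\|\le\|\bv\|$, and $\|\bw\|_1\ls\|\curl\bw\|\ls\ka\|\bv\|$ by the embedding \eqref{eq:emb1} (applicable since $\bw\in\bX$). If $r\ge2$ I would apply the a priori estimate repeatedly to $\bu=\mathcal{P}^i\bw$ — legitimate because each $\mathcal{P}^i\bw$, being a finite combination of eigenfunctions, again satisfies the boundary conditions of \eqref{eq:Pwjeq} — obtaining, for $r=2k$,
\eqn{\|\bw\|_{2k}\ls\sum_{i=0}^{k}\|\mathcal{P}^i\bw\|\ls\sum_{i=0}^{k}\ka^{2i}\|\bv\|\ls\ka^{2k}\|\bv\|,}
and, for $r=2k+1$, $\|\bw\|_{2k+1}\ls\|\mathcal{P}^k\bw\|_1+\sum_{i=0}^{k-1}\|\mathcal{P}^i\bw\|\ls\|\curl\mathcal{P}^k\bw\|+\ka^{2k-2}\|\bv\|\ls\ka^{2k+1}\|\bv\|$, where $\|\mathcal{P}^k\bw\|_1\ls\|\curl\mathcal{P}^k\bw\|$ again by \eqref{eq:emb1}. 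Thus $\|\bw\|_{m-l}\ls\ka^{m-l}\|\bv\|$ for every $l$, and summing over $l$ in the definition of the weighted norm,
\eqn{\|\Sm\bv\|_{m,\ka}^2=\sum_{l=0}^{m}\ka^{2l}\|\bw\|_{m-l}^2\ls\sum_{l=0}^{m}\ka^{2l}\,\ka^{2(m-l)}\|\bv\|^2=(m+1)\,\ka^{2m}\|\bv\|^2\ls\ka^{2m}\|\bv\|^2,}
which yields $\|\Sm\bv\|_{m,\ka}\ls\ka^m\|\bv\|$, the asserted estimate.

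The step I expect to require the most care is the elliptic-regularity input in the second paragraph: pinning down the a priori estimate for the (divergence-constrained) problem \eqref{eq:Pwjeq} in precisely the form used above, and verifying that each iterate $\mathcal{P}^i\bw$ is an admissible argument for it. Both points are resolved by the single observation that $\mathcal{P}^i\bw=\sum_{j\le N}\la_j^i\bv_j\bph_j$ is a finite sum of the eigenfunctions $\bph_j$ and therefore satisfies exactly the boundary conditions $\curl(\cdot)\times\n=\bzero$, $(\cdot)\cdot\n=0$ of \eqref{eq:Pwjeq}; once this is in place, everything else reduces to bookkeeping with the Parseval identities and the defining relation $\la_N\le\cs\ka^2<\la_{N+1}$.
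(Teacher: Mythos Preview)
Your proof is correct and follows essentially the same approach as the paper: expand $\Sm\bv$ in the orthonormal eigenbasis, use Parseval together with $\la_j\le\cs\ka^2$ to bound $\|\mathcal{P}^k\Sm\bv\|$ and $\|\curl\mathcal{P}^k\Sm\bv\|$, and then bootstrap to $\bH^r$ via the elliptic shift estimate of \cite[Theorem~3.4.5]{costabel2010corner} for the system \eqref{eq:Pwjeq}, using the embedding \eqref{eq:emb1} for the odd-order step. The only cosmetic difference is that the paper records the shift estimate with lower-order term $\|\cdot\|_1$ and iterates directly, whereas you first absorb the lower-order term to $\|\cdot\|$ via interpolation; the resulting telescoping sums are equivalent.
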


	\begin{proof}
		Let  $\mu_{j}=\left(\bv, \bph_{j} \right)$,  then  $\bv=\sum_{j=1}^{\infty} \mu_{j} \bph_{j}$, $\Sm  v=  \sum_{j=1}^{N} \mu_{j} \bph_{j} $ 
		and  $\|\bv\|^{2}=$ $\sum_{j=1}^{\infty}\left|\mu_{j}\right|^{2}$.  Then for any  $\alpha \in \N$,  we have
		$$
		\mathcal{P}^{\alpha} \Sm  \bv=  \sum_{j=1}^{N} \mu_{j} \mathcal{P}^{\alpha} \bph_{j} =  \sum_{j=1}^{N} \mu_{j} \la_{j}^{\alpha} \bph_{j} \in \bX.
		$$
		Using the orthogonality of $\left\{\bph_{j} \right\}_{j=1}^{\infty}$ and the fact that $0 < \la_{1} \leq \ldots \leq \la_{N} \leq \cs \ka^2$,  we obtain 
		$$
		\left\|\mathcal{P}^{\alpha} \Sm  \bv\right\| \ls    \bigg(\sum_{j=1}^{N} \la_{j}^{2 \alpha}\left|\mu_{j}\right|^{2}\bigg)^{\frac{1}{2}} \ls  \ka^{2 \alpha }\|\bv\| .
		$$
		Note that  $\left(\curl  \bph_{i}, \curl  \bph_{j} \right) 
		= \la_i \delta_{ij} $ and $\mathcal{P}^{\alpha} \Sm  \bv \in \bX $, we get by \eqref{eq:emb1} that
		$$
		\|\mathcal{P}^{\alpha} \Sm  \bv \|_1 \ls   \bigg\|\sum_{j=1}^{N} \mu_{j} \la_{j}^{\alpha} \curl\bph_{j}  \bigg\|   \ls
		\bigg(\sum_{j=1}^{N} \la_{j}^{2 \alpha+1}\left|\mu_{j}\right|^{2}\bigg)^{\frac{1}{2}} \ls  \ka^{2 \alpha+1}\|\bv\|.
		$$
		Note that  $\Sm \bv, \mathcal{P}^{\alpha} \Sm  \bv \in \bX $,  using the above two basic estimates and  the elliptic regularity theory   \cite[Theorem~3.4.5]{costabel2010corner}, 
		we may get the following high-order stability estimate for any $\ell \in \N$ by induction:
		$$
		\begin{aligned}
			& \|\Sm  \bv\|_{2 \ell+1} \ls   \left\|\mathcal{P}  \Sm  v\right\|_{2 \ell-1} + \|\Sm  \bv\|_{1} \ls  \cdots \ls  \left\|\mathcal{P}^{\ell} \Sm  v\right\|_{1} + \cdots +  \|\Sm  \bv\|_{1} \ls
			\ka^{2 \ell+1}\|\bv\| \quad  2 \ell+1 \leq m,\\
			& \|\Sm  \bv\|_{2 \ell+2} \ls   \left\|\mathcal{P}  \Sm  v\right\|_{2\ell } + \|\Sm  \bv\|_{1} \ls  \cdots \ls  \left\|\mathcal{P}^{\ell+1} \Sm  v\right\|  + \cdots + \|\Sm  \bv\|_{1} \ls
			\ka^{2 \ell+2}\|\bv\|  \quad   2 \ell+2 \leq m.
		\end{aligned}
		$$
		This completes the proof of this lemma.
	\end{proof}

	\subsection{The auxiliary Maxwell elliptic problem}
	
	Given $ (\fp, \gp) \in \bL^2(\Om)\times \bL^2_t(\Ga)$, we consider the following auxiliary problem for solving the vector field $\w$: 
	\begin{equation}
		\left\{\begin{array}{rlrl}
			\curl\curl \w-\ka^{2}\w+\cs\ka^2\Sm \textsf{P}^{\perp}\w & =\fp  & \text { in } \Om, \\
			\curl \w \times  \n -\ii\ka\la\w_T & =\gp  & \text { on } \Ga,
		\end{array}\right. 
		\label{eq:auxProb}
	\end{equation}
	where $\cs$ is defined in \eqref{eq:CT}. 
	Obviously, the weak formulation is:  Find  $\w \in \V $,  such that 
	\eq{
		b(\w,  \bv) =(\fp, \bv) +\inn{\gp, \bv_T}   \quad \forall \bv \in \V, \label{eq:bVP}
	} 
	where the sesquilinear form   $b(\cdot,\cdot): \V \times \V \rightarrow \C$ is defined by 
	\begin{equation}
		b(\bu,   \bv) :=(\curl  \bu,   \curl  \bv) -\ka^{2}(\bu,   \bv) +\cs\ka^2(\Sm  \textsf{P}^{\perp}\bu, \textsf{P}^{\perp}  \bv) -\ii \ka \la\inn{\bu_T, \bv_T} .
		\label{eq:defbuv}
	\end{equation}
	The following lemma says that the  sesquilinear form $b(\cdot, \cdot)$ satisfies the  continuity and G\aa rding inequality on $\V$, and is coercive on the subspace $\V\cap \bm{H}_0(\dive^0;\Om)$. 
	\begin{lemma}\label{lem:bcoev}
		For any $\bu,\bv \in \V$, there hold
		\begin{align}
			\big|	b(\bu,\bv) \big|& \ls \ener{\bu}  \ener{\bv},  \label{eq:bCont} \\
			\big(\re-\im \big)	b(\bu,\bu) & \geq \ener{\bu}^2 -2 \ka^2\|  \bu \|^2, 		\label{eq:bgarding} \\
			\sup_{ {\bm 0}\neq \bv\in\V} \frac{\abs{	b( \bu, \bv)}}{\ener{\bv}} & \gtrsim \ener{\bu}.  \label{eq:inf-sup}
		\end{align} 	
		Particularly, the following coercivity holds on the subspace $\V\cap \bm{H}_0(\dive^0;\Om)$:
		\eq{\label{eq:bCoer}
			\abs{	b(\bu,\bu)} & \gtrsim \ener{\bu}^2      \quad \forall \bu \in \V\cap \bm{H}_0(\dive^0;\Om). 
		}
	\end{lemma}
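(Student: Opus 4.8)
The plan is to establish the four estimates in turn, extracting from $b$ its "Helmholtz-type" structure. First, for the continuity \eqref{eq:bCont}: the terms $(\curl\bu,\curl\bv)$, $\ka^2(\bu,\bv)$ and $\ka\la\inn{\bu_T,\bv_T}$ are all dominated by $\ener{\bu}\ener{\bv}$ by Cauchy--Schwarz and the definition of $\ener{\cdot}$. For the extra term $\cs\ka^2(\Sm\textsf{P}^\perp\bu,\textsf{P}^\perp\bv)$, I would use $\cs\simeq1$, the contraction property $0\le(\Sm\w,\w)=\norm{\Sm\w}^2\le\norm\w^2$ recorded after the definition of $\Sm$ (so $\norm{\Sm\textsf{P}^\perp\bu}\le\norm{\textsf{P}^\perp\bu}\le\norm\bu$), and $\ka^2\norm\bu\,\norm\bv\le\ener\bu\,\ener\bv$; note $\textsf{P}^\perp$ is an $\bL^2$-orthogonal projection by Lemma~\ref{lem:proju}. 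For the G\aa rding inequality \eqref{eq:bgarding}: $\re b(\bu,\bu)=\norm{\curl\bu}^2-\ka^2\norm\bu^2+\cs\ka^2(\Sm\textsf{P}^\perp\bu,\textsf{P}^\perp\bu)$ and $-\im b(\bu,\bu)=\ka\la\norm{\bu_T}^2_\Ga$; adding these and discarding the nonnegative term $\cs\ka^2\norm{\Sm\textsf{P}^\perp\bu}^2\ge0$ gives $(\re-\im)b(\bu,\bu)\ge\norm{\curl\bu}^2+\ka\la\norm{\bu_T}^2_\Ga-\ka^2\norm\bu^2=\ener\bu^2-2\ka^2\norm\bu^2$.

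The coercivity \eqref{eq:bCoer} on $\V\cap\bm{H}_0(\dive^0;\Om)$ is the heart of the matter and should be proved before \eqref{eq:inf-sup}. For $\bu$ in this subspace we have $\textsf{P}^0\bu=\bzero$ and $\textsf{P}^\perp\bu=\bu$, so $b(\bu,\bu)=\norm{\curl\bu}^2-\ka^2\norm\bu^2+\cs\ka^2\norm{\Sm\bu}^2-\ii\ka\la\norm{\bu_T}^2_\Ga$. Expand $\bu=\sum_j\mu_j\bph_j$; then $\norm{\curl\bu}^2=\sum_j\la_j|\mu_j|^2$, $\norm\bu^2=\sum_j|\mu_j|^2$, and $\norm{\Sm\bu}^2=\sum_{j\le N}|\mu_j|^2$ with $\la_{N+1}>\cs\ka^2$. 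Split the index set at $N$: for $j>N$ one has $\la_j>\cs\ka^2\ge6\ka^2$, so $\sum_{j>N}(\la_j-\ka^2)|\mu_j|^2\ge\tfrac56\sum_{j>N}\la_j|\mu_j|^2$, controlling both $\norm{\curl\bu}^2$ and $\ka^2\norm\bu^2$ on the high modes; for $j\le N$ the "positive mass" term $\cs\ka^2\norm{\Sm\bu}^2=\cs\ka^2\sum_{j\le N}|\mu_j|^2$ beats $\ka^2\sum_{j\le N}|\mu_j|^2$ with room to spare (using $\cs\ge6$). Combining, $\re b(\bu,\bu)\ge c(\norm{\curl\bu}^2+\ka^2\norm\bu^2)$. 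It remains to recover the boundary term $\ka\la\norm{\bu_T}^2_\Ga$: here I would use \eqref{eq:trace}, $\norm{\bu_T}_\Ga\le\norm\bu_\Ga\le\ctr\norm\bu^{1/2}\norm{\curl\bu}^{1/2}$ (valid since $\bu\in\bX$), so $\ka\la\norm{\bu_T}^2_\Ga\le\ka\la\ctr^2\norm\bu\,\norm{\curl\bu}\le\tfrac14\norm{\curl\bu}^2+\ka^2\la^2\ctr^4\norm\bu^2$; this is absorbed into $\abs{b(\bu,\bu)}\ge\max\{\re b,-\im b\}\gtrsim\re b(\bu,\bu)+\ka\la\norm{\bu_T}^2_\Ga\gtrsim\ener\bu^2$, and this is precisely where the constant $\cs=6+49\ctr^4\la^2$ is calibrated (the $49=7^2$ providing the slack to dominate the $\tfrac14\norm{\curl\bu}^2+\ka^2\la^2\ctr^4\norm\bu^2$ bound after the high/low-mode split).

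Finally, for the inf--sup condition \eqref{eq:inf-sup}, the standard trick is to decompose $\bu=\textsf{P}^0\bu+\textsf{P}^\perp\bu=:\nabla\phi+\bz$ with $\bz\in\V\cap\bm{H}_0(\dive^0;\Om)$ (Remark~\ref{rem:proj}), and test against $\bv=\bz-\nabla\phi$ (a sign flip on the gradient part, a device familiar from indefinite-Helmholtz and Maxwell analyses). Since $\Sm\textsf{P}^\perp\bu=\Sm\bz$, $\textsf{P}^\perp\bv=\bz$, $\curl\nabla\phi=\bzero$, and $(\nabla\phi,\bz)=0$, one computes $b(\bu,\bv)=\norm{\curl\bz}^2+\ka^2\norm{\nabla\phi}^2-\ka^2\norm\bz^2+\cs\ka^2\norm{\Sm\bz}^2-\ii\ka\la\inn{\bu_T,\bv_T}$; the first four (real) terms are, up to the boundary contribution, exactly $\re b(\bz,\bz)+2\ka^2\norm{\nabla\phi}^2$, hence $\gtrsim\ener\bz^2+\ka^2\norm{\nabla\phi}^2$ by the coercivity just proved (modulo reabsorbing the boundary term as above), while the imaginary part supplies the missing $\ka\la\norm{\bu_T}^2_\Ga$ via $\abs{b(\bu,\bv)}\gtrsim\re b(\bu,\bv)+\abs{\im b(\bu,\bv)}$ and $\norm{\bu_T}_\Ga=\norm{\bv_T}_\Ga$. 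Noting $\ener\bv=\ener\bu$ and $\ener\bu^2\simeq\ener\bz^2+\ka^2\norm{\nabla\phi}^2+\ka\la\norm{\bu_T}^2_\Ga$ finishes it. The main obstacle throughout is bookkeeping the constants in the high/low eigenmode split so that they exactly match the definition of $\cs$; the rest is routine.
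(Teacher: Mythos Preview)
Your approach is essentially the paper's: Helmholtz-decompose $\bu=\textsf{P}^0\bu+\textsf{P}^\perp\bu$, test against the sign-flipped $\bv=\textsf{P}^\perp\bu-\textsf{P}^0\bu$, and combine the eigenmode split with the trace inequality \eqref{eq:trace}. The paper reverses your order (it proves the key lower bound \eqref{eq:inf-sup} directly and reads off \eqref{eq:bCoer} by setting $\textsf{P}^0\bu=\bzero$), but the computations are the same.

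There are, however, two genuine slips in your inf--sup sketch. First, the claims $\|\bu_T\|_\Ga=\|\bv_T\|_\Ga$ and $\ener{\bv}=\ener{\bu}$ are false: the tangential traces of $\nabla\phi$ and $\bz$ are \emph{not} $\bL^2(\Ga)$-orthogonal, so only the curl and bulk $\bL^2$ parts of $\ener{\cdot}$ agree. (One does get $\ener{\bu}\simeq\ener{\bv}$ after invoking \eqref{eq:trace} on $\bz\in\bX$, which is all that is needed.) Second, ``the imaginary part supplies the missing $\ka\la\|\bu_T\|^2_\Ga$'' is not right: $\im b(\bu,\bv)=-\ka\la\big(\|\bz_T\|^2_\Ga-\|(\nabla\phi)_T\|^2_\Ga\big)$, which has no definite sign and does not dominate $\|\bu_T\|^2_\Ga$. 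The paper handles this by taking $(\re+\im)b(\bu,\bv)$, applying Young's inequality to the boundary cross-term $2\ka\la\im\inn{(\nabla\phi)_T,\bz_T}$, and splitting the result as $T_1+T_2$ with $T_1\ge\tfrac12(\ener{\nabla\phi}^2+\ener{\bz}^2)\ge\tfrac12\ener{\bu}\ener{\bv}$ and $T_2\ge 0$; the constant $\cs=6+49\ctr^4\la^2$ is calibrated precisely so that $T_2\ge 0$ after the trace bound $\tfrac72\ka\la\|\bz_T\|^2_\Ga\le\tfrac14\|\curl\bz\|^2+\tfrac{49}{4}\ctr^4\la^2\ka^2\|\bz\|^2$. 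Your ``coercivity-then-absorb'' route can be made to work, but the constants do not line up with the given $\cs$ as automatically as you suggest; following the paper's $T_1$/$T_2$ split is cleaner.
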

	
	\begin{proof} The proofs of \eqref{eq:bCont} and \eqref{eq:bgarding} are straightforward and omitted.
		Next, we   prove \eqref{eq:inf-sup}.  For any $\bu \in \V$, let $\bu_1 = \textsf{P}^0 \bu$ and  $\bu_2 = \textsf{P}^{\perp}\bu$, thus $\bu =  \bu_1 + \bu_2$. To show \eqref{eq:inf-sup}, it suffices to prove that
		\eq{\label{eq:bu1u2}  \big(\re+\im \big)	b(\bu_1 + \bu_2,\bu_2 - \bu_1) \gtrsim  \ener{\bu_1 + \bu_2} \ener{\bu_2 - \bu_1}.
		}
		Using the fact  that $\curl \bu_1 ={\bm{0}}$, $(\bu_1,\bu_2)=0$, and the Young's inequality, we have
		\eqn{&\quad \big(\re+\im \big)	b(\bu_1 + \bu_2,\bu_2 - \bu_1)\\
			&=\|\curl\bu_2\|^2  -\ka^{2} \|\bu_2\|^2 +   \cs\ka^2(\Sm \bu_2,\bu_2)+\ka^{2} \| \bu_1 \|^2 + \ka \la \big( \|(\bu_1)_T\|^2_{\Ga} -  \|(\bu_2)_T\|^2_{\Ga}  \big) \\ 
			& \quad + 2\ka \la \im \big(\inn{(\bu_1)_T, (\bu_2)_T }\big)\\
			&\ge \Big(   \frac{1}{2}\|\curl\bu_2\|^2   + \frac{1}{2}\ka^{2} \|\bu_2\|^2  + \ka^{2} \|\bu_1\|^2    + \frac12\ka\la  \|(\bu_1)_T\|^2_{\Ga} + \frac{1}{2} \ka \la   \|(\bu_2)_T\|^2_{\Ga}     \Big) \\
			& \quad   + \Big(  \frac{1}{2}  \|\curl\bu_2\|^2  -\frac{3}{2}\ka^{2} \| 
			\bu_2\|^2   +\cs\ka^2(\Sm \bu_2,\bu_2)    -\frac72 \ka \la   \|(\bu_2)_T\|^2_{\Ga}  \Big) \\
			&=: T_1 +T_2.
		}
		It is obvious that 
		\eqn{T_1 \geq \frac{1}{2} \big(   \ener{\bu_1}^2 + \ener{\bu_2}^2  \big) \geq \frac{1}{2} \ener{\bu_1 + \bu_2} \ener{\bu_2 - \bu_1}.} Next we prove $T_2 \geq 0$. 
		By Remark~\ref{rem:proj}{\rm (c)}, \eqref{eq:emb1} and the trace inequality \eqref{eq:trace}, there holds
		\eq{\frac72\ka \la \|(\bu_2)_T\|^2_{\Ga}\leq \frac72\ka\la\ctr^2 \|\bu_2\| \|\curl \bu_2 \|\le \frac14\|\curl \bu_2\|^2+ \frac{49}{4} \ctr^4\la^2 \ka^2 \|\bu_2\|^2.
			\label{eq:ep1} }
		Therefore, we have 
		\eqn{
			T_2   \geq \frac14\|\curl \bu_2\|^2  - \frac14 \cs  \ka^{2} \|\bu_2\|^2  + \cs\ka^2(\Sm \bu_2,\bu_2).
		}
		Let  $\mu_{j}=\left(\bu_2,  \bph_{j} \right)$,  then $\bu_2=\sum_{j=1}^{\infty} \mu_{j} \bph_{j} $ (converges in $\bX$), 
		$\|\bu_2\|^{2}= \sum_{j=1}^{\infty}\left|\mu_{j}\right|^{2}$ and $\|\curl \bu_2\|^2 = \sum_{j=1}^{\infty}\la_j\left|\mu_{j}\right|^{2}$.  Noting that $0 < \la_{1} \leq \ldots \leq \la_{N} \leq \cs \ka^{2}  < \la_{N+1} \leq \cdots $, we have 
		\eqn{T_2 & \geq  \Big(     \frac{1}{4} \sum_{j=1}^{\infty} \la_{j}\left|\mu_{j}\right|^{2} -\frac14 \cs \ka^{2} \sum_{j=1}^{\infty}\left|\mu_{j}\right|^{2} + \cs \ka^{2} \sum_{j=1}^{N}\left|\mu_{j}\right|^{2}    \Big)  \ge 0,  
		}
		which completes the proof of \eqref{eq:inf-sup}. The proof of \eqref{eq:bCoer} follows directly from \eqref{eq:bu1u2} since $\bu_1={\bm{0}}$ for any $\bu\in \bm{H}_0(\dive^0;\Om)$.
		This completes the proof of this lemma.
	\end{proof}

	The well-posedness of  \eqref{eq:bVP} is a direct corollary of \eqref{eq:inf-sup} and the generalized Lax-Milgram theorem (see, e.g., \cite{nevcas1962methode}). 
	
	It is straightforward to observe that  the auxiliary problem mentioned above differs from  \eqref{Maxwell} by adding some smoothed, divergence-free component to the left-hand side of the original equation.   Compared to the original problem  (see Theorem~\ref{thm:stability}), the solution to the auxiliary problem exhibits improved behavior with respect to $\ka$-dependence. Specifically, we have the following lemma {whose proof can be obtained by using standard techniques for regularity estimates for Maxwell equations (see e.g. \cite{hiptmair2011stability,lu2018regularity}) and is postponed to Appendix~\ref{sc:appendixA}}.
	
	\begin{lemma}[Regularity for the auxiliary Maxwell elliptic problem]\label{lem:goodRegular}
		Suppose  $\Om$ is $C^{2}$,  $\fp\in\bH(\dive^0;\Om)$  and $\gp\in \bH^{-\frac{1}{2}}_t(\dive_{\Ga};\Ga)$.  Let $\w$ be the solution to \eqref{eq:auxProb}, then the following stability and regularity estimates hold:
		\eq{
			\|\curl \w \|  + \ka \|\w\|   &  \ls  \ka^{-1}\|\fp\|  + \ka^{-\frac{1}{2}} \|\gp\|_{\Ga}, \label{bSfgeqStabEnerg}  \\
			\|\w\|_{1}  & \ls   \ka^{-1}\|\fp\|  + \ka^{-\frac{1}{2}} \|\gp\|_{\Ga} + \ka^{-1}\| {\dive_\Ga} \gp\|_{-\frac{1}{2},\Ga}. \label{bSfgE1}
		}
		If $\gp \in \bH^{\frac{1}{2}}_t(\Ga)$, then
		\eq{			\|\curl\w\|_{1} & \ls \|\fp\| + \ka^{ \frac{1}{2}} \|\gp\|_{\Ga}+\| \gp\|_{\frac{1}{2},\Ga}.  \label{bSfgcurlE1}}
		Furthermore, if $\Om$ is $C^{m}$   and  $\fp \in \bH^{m  -2}(\dive^0;\Om)$, $\fp\cdot\n \in \bH^{m-\frac{3}{2}}(\Ga)$, $\gp \in \bH^{m-\frac{3}{2}}_t(\dive_{\Ga};\Ga)$ for some integer $m\geq 2$.  
		Then we have $\w \in \bH^{m}(\Om)$  and  
		\eq{
			\|\w\|_{m}  & \ls   \ka^{m-2} \tilde{C}_{m-2,\fp,\gp},  \label{bSfgEm}
		}
		where 
		\eqn{ 
			\tilde{C}_{m-2,\fp,\gp} &:=    \ka^{-m+2}\big( \|\fp\|_{m-2,\ka} +  \|\gp\|_{m-\frac{3}{2},\ka  {,\Ga}} \big) \\
			& \qquad	  + \ka^{\frac{1}{2}}  \|\gp\|_{\Ga}  + \ka^{-m+1} \big( \|\fp\cdot \n\|_{m-\frac{3}{2},\Ga} +    \| {\dive_\Ga} \gp\|_{m-\frac{3}{2},\Ga} \big).
		}	
	\end{lemma}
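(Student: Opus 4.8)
The plan is to prove the regularity estimates for the auxiliary problem \eqref{eq:auxProb} by mimicking the structure of the proof of Theorem~\ref{thm:stability}, but exploiting the crucial point that the additional term $\cs\ka^2\Sm\textsf{P}^\perp\w$ improves the $\ka$-dependence of the stability constant by precisely one power of $\ka$. The starting point is the coercivity \eqref{eq:bCoer}: writing $\w=\textsf{P}^0\w+\textsf{P}^\perp\w$, the component $\textsf{P}^0\w=\nabla q$ for some $q\in H^1(\Om)$, and testing \eqref{eq:bVP} with $\bv=\nabla\psi$ kills the curl and the smoothing terms, so $-\ka^2(\nabla q,\nabla\psi)-\ii\ka\la\inn{(\nabla q)_T,(\nabla\psi)_T}=(\fp,\nabla\psi)+\inn{\gp,(\nabla\psi)_T}$; since $\dive\fp=0$ and $\fp\cdot\n$, $\dive_\Ga\gp$ are controlled, one recovers $\|\nabla q\|$ at the right order. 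For the divergence-free part $\textsf{P}^\perp\w\in\V\cap\bH_0(\dive^0;\Om)$, testing with $\bv=\textsf{P}^\perp\w$ and using \eqref{eq:bCoer} gives $\ener{\textsf{P}^\perp\w}^2\ls|b(\textsf{P}^\perp\w,\textsf{P}^\perp\w)|\ls\|\fp\|\,\ka^{-1}\ener{\textsf{P}^\perp\w}+\|\gp\|_\Ga\,\ka^{-1/2}\ener{\textsf{P}^\perp\w}$ after Cauchy--Schwarz and the trace inequality \eqref{eq:trace}; note the key gain is that the $\ka^2\|\cdot\|^2$ and $\cs\ka^2(\Sm\cdot,\cdot)$ terms now add constructively on the divergence-free subspace, so no power of $\ka$ is lost in the denominator. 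This establishes \eqref{bSfgeqStabEnerg}, and \eqref{bSfgE1} then follows from the embedding \eqref{eq:emb1} applied to $\textsf{P}^\perp\w\in\bX$ together with the estimate for $\dive_\Ga\gp$ entering the normal-trace identity analogous to \eqref{eq:divEt}.

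Next, for \eqref{bSfgcurlE1} I would bootstrap: rewrite the equation as $\curl\curl\w=\fp+\ka^2\w-\cs\ka^2\Sm\textsf{P}^\perp\w$, observe that $\|\ka^2\w\|\ls\|\fp\|+\ka^{1/2}\|\gp\|_\Ga$ from \eqref{bSfgeqStabEnerg}, and that $\|\cs\ka^2\Sm\textsf{P}^\perp\w\|\ls\ka^2\|\textsf{P}^\perp\w\|\ls\|\fp\|+\ka^{1/2}\|\gp\|_\Ga$ by the boundedness $\|\Sm\cdot\|\le\|\cdot\|$; feeding this into a div-curl elliptic regularity estimate (Lemma~\ref{lem:embdding}, or the $C^2$ regularity in the references \cite{hiptmair2011stability,lu2018regularity}) for the first-order system satisfied by $\curl\w$ yields the $\bH^1$ bound on $\curl\w$. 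The term $\|\gp\|_{1/2,\Ga}$ appears precisely as the boundary datum in this elliptic estimate.

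Finally, for the higher-order estimate \eqref{bSfgEm} I would argue by induction on $m$ exactly as in the proof of \eqref{eq:Hmstab}, using the shifted system $\curl\curl\w-\w=\fp+(\ka^2-1)\w-\cs\ka^2\Sm\textsf{P}^\perp\w$ and $\curl\w\times\n-\ii\la\w_T=\gp+\ii(\ka-1)\la\w_T$ together with the normal-trace identity coming from $\dive_\Ga(\curl\w\times\n)=\n\cdot\curl\curl\w$. The base case $m=2$ follows from \eqref{bSfgcurlE1} and \eqref{bSfgE1}. The crucial new ingredient, and the main obstacle, is controlling the smoothing term $\cs\ka^2\|\Sm\textsf{P}^\perp\w\|_{m-2}$ and $\|(\Sm\textsf{P}^\perp\w)\cdot\n\|_{m-3/2,\Ga}$: here I invoke the stability of $\Sm$ from Lemma~\ref{lem:Sstab}, namely $\|\Sm\textsf{P}^\perp\w\|_{m-2,\ka}\ls\ka^{m-2}\|\textsf{P}^\perp\w\|\ls\ka^{m-2}(\ka^{-1}\|\fp\|+\ka^{-1/2}\|\gp\|_\Ga)$, which shows the smoothed term contributes at the same order $\ka^{m-2}\tilde C_{m-2,\fp,\gp}$ as everything else and, importantly, does \emph{not} degrade the gain of one power of $\ka$ obtained in \eqref{bSfgeqStabEnerg}. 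Note that $\Sm\textsf{P}^\perp\w=\textsf{P}^\perp(\Sm\textsf{P}^\perp\w)$ is itself divergence-free, so its $\bH^m$ regularity is governed by $C^m$ smoothness of $\Om$ via the same elliptic estimate \cite[Theorem~3.4.5]{costabel2010corner} used for the eigenfunctions $\bph_j$; assembling the pieces as in Theorem~\ref{thm:stability} and tracking the powers of $\ka$ carefully gives the claimed bound with constant $\tilde C_{m-2,\fp,\gp}$. The verification that all $\ka$-powers line up — in particular that the $\ka^{1/2}\|\gp\|_\Ga$ term is the dominant boundary contribution and cannot be absorbed — is the bookkeeping-heavy step I would carry out last.
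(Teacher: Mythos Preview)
Your overall strategy matches the paper's: stability first, then $\bH^1$ regularity via a Helmholtz decomposition of $\w$, then $\|\curl\w\|_1$ via a div--curl system, and finally the $\bH^m$ bound by induction on the shifted system exactly as in Theorem~\ref{thm:stability}. Two points deserve comment.

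\textbf{A gap in your stability argument.} When you test with $\bv=\nabla\psi$ you write $-\ka^2(\nabla q,\nabla\psi)-\ii\ka\la\inn{(\nabla q)_T,(\nabla\psi)_T}$ on the left, but the boundary term in $b(\w,\nabla\psi)$ is $-\ii\ka\la\inn{\w_T,(\nabla\psi)_T}$, which retains the cross contribution $\inn{(\textsf{P}^\perp\w)_T,(\nabla\psi)_T}$. Likewise, coercivity \eqref{eq:bCoer} gives you $\ener{\textsf{P}^\perp\w}^2\ls|b(\textsf{P}^\perp\w,\textsf{P}^\perp\w)|$, but the variational identity only hands you $b(\w,\textsf{P}^\perp\w)$; the difference is again $-\ii\ka\la\inn{(\nabla q)_T,(\textsf{P}^\perp\w)_T}$. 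So your two sub-estimates are coupled through these boundary cross terms, and you cannot close them independently as written. The paper avoids this entirely by invoking the inf--sup bound \eqref{eq:inf-sup} (already proved in Lemma~\ref{lem:bcoev} by testing with $\textsf{P}^\perp\bu-\textsf{P}^0\bu$, which is precisely where the cross terms are absorbed), giving \eqref{bSfgeqStabEnerg} in two lines. Your approach is salvageable --- you would essentially re-derive the inf--sup by hand --- but as stated it is incomplete.

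\textbf{The $\bH^1$ estimate.} Your sketch ``embedding \eqref{eq:emb1} plus the normal-trace identity'' is too thin. The paper decomposes $\w=\Bup^0+\nabla\psi$ (this \emph{is} your $\textsf{P}^\perp\w+\textsf{P}^0\w$, with $\psi$ harmonic), gets $\|\Bup^0\|_1\ls\|\curl\w\|$ from \eqref{eq:emb1}, but then for $\|\psi\|_2$ it reads off from the impedance condition that $\ii\ka\la\nabla_\Ga\psi=(\curl\w)\times\n-\ii\ka\la\Bup^0_T-\gp$, takes $\dive_\Ga$, uses $\dive_\Ga((\curl\w)\times\n)=\n\cdot(\fpp+\ka^2\w)$, and applies Laplace--Beltrami regularity on $\Ga$ followed by Dirichlet regularity for $\Delta\psi=0$. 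That is where $\|\dive_\Ga\gp\|_{-1/2,\Ga}$ enters. You should spell this out.

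For \eqref{bSfgcurlE1} and \eqref{bSfgEm} your plan coincides with the paper's. One simplification you missed: $(\Sm\textsf{P}^\perp\w)\cdot\n=0$ on $\Ga$ since $\Sm$ maps into $\bX\subset\bH_0(\dive^0;\Om)$, so that boundary term vanishes outright rather than needing to be ``controlled''.
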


	\begin{remark} \label{rem:Sfstab}   
		{\rm (a)}  {If  $\Om  \in C^{m} $ for some integer $m \geq 2$, $\fp \in \bH^{m  -2}(\dive^0;\Om)$, $\fp\cdot\n =0$
			and $\gp =\bm{0}$  on $\Ga$,}  then we have $\|\w\|_{m} \ls  \|\fp\|_{m-2,\ka}$.

		{\rm (b)} Similar to the original continuous problem, if $\dive \fp \neq 0 $, the stability estimate \eqref{bSfgeqStabEnerg} still holds, the regularity estimate \eqref{bSfgE1}  and \eqref{bSfgcurlE1} 
		hold if an additional term $\ka^{-2}\|\dive\fp\|$ and $\ka^{-1}\|\dive \fp\| $ is added to the right-hand side of \eqref{bSfgE1}  and \eqref{bSfgcurlE1}, respectively. The $\bH^m$ regularity 
		estimate \eqref{bSfgEm} holds if the domain $\Om$ is $C^{m+1}$ and an additional term $\ka^{-m} \big(  \|\dive\fp\|_{m-1}  +\ka  \|\dive\fp\|_{m-2} \big) $ is added to $\tilde{C}_{m-2,\fp,\gp}$.  
	\end{remark}
	
	\subsection{Elliptic projections}			For any $\bu \in \V$, we define its  elliptic projection $\Ph^{\pm}\bu  \in \V_h$ by
	\begin{equation}
		b\left(\bu- \Ph^{+}\bu, \bv_{h}\right)=0, \;  \; b\left( \bv_{h}, \bu- \Ph^{-} \bu \right)=0    \quad \forall \bv_{h} \in \V_h.
		\label{eq:EP-orth}
	\end{equation}
	
	The following lemma provides error estimates  in the energy and $\bL^2$ norms  for the  elliptic projections under the condition that $\ka h$ is sufficiently small, ensuring that $\Ph^{\pm}\bu$ are well-defined. The proof, which follows the approach in \cite[Appendix~A]{lu2024preasymptotic} with some modifications, is included in Appendix~\ref{sc:appendixB} for the reader's convenience.
	
	\begin{lemma}\label{lem:Pherror} Suppose that $\Om$ is $C^{2}$. There exists a  constant $\tilde C_0>0$ such that if $\ka h\leq\tilde C_0$, then the following error estimates hold for any $\bu \in \V$ and $\bv_{h}\in \Vh $:
		\eq{
			\ener{\bu -\Ph^{\pm}\bu } & \ls      \ener{\bu-\bv_{h}}, \label{eq:Pherror_energy} \\
			\|\bu-\Ph^{\pm}\bu \|    &  \ls \|\bu-\bv_{h} \|  +h\ener{\bu-\bv_{h}}   + h^{\frac{1}{2}}\|(\bu -\bv_h)_T\|_{\Ga}, \label{eq:Pherror_l2}  \\ 
			\|(\bu -\Ph^{\pm}\bu)_T\|_{\Ga}    & \ls  \|(\bu -\bv_{h} )_T\|_{\Ga} + h^{-\frac{1}{2}} \|\bu-\bv_{h}\|  +h^{\frac{1}{2}}\ener{\bu-\bv_{h}}. \label{eq:boundPherror}  
		}
	\end{lemma}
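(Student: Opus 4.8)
The plan is to derive \eqref{eq:Pherror_energy}--\eqref{eq:boundPherror} by coupling a Schatz-type argument for the energy norm with an Aubin--Nitsche duality argument for the $\bL^2$ norm, and then closing the resulting pair of inequalities by exploiting the smallness of $\ka h$; running the same argument with $\bu$ replaced by $\bzero$ also yields the discrete inf--sup stability of $b$ on $\V_h$, hence the well-posedness of $\Ph^{\pm}$. I would carry out $\Ph^{+}$ in detail; $\Ph^{-}$ is handled identically after swapping the roles of the two arguments of $b$, which changes none of the estimates of Lemma~\ref{lem:bcoev} and Lemma~\ref{lem:goodRegular} used below. Put $\be:=\bu-\Ph^{+}\bu$, so that $b(\be,\bv_h)=0$ for all $\bv_h\in\V_h$, and fix an arbitrary $\bv_h\in\V_h$. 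By the G\aa rding inequality \eqref{eq:bgarding}, the orthogonality $b(\be,\bv_h-\Ph^{+}\bu)=0$, the continuity \eqref{eq:bCont}, and Young's inequality, $\ener{\be}^2\le(\re-\im)\,b(\be,\bu-\bv_h)+2\ka^2\|\be\|^2\ls\ener{\bu-\bv_h}^2+\ka^2\|\be\|^2$, so the whole problem reduces to bounding $\ka\|\be\|$.

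For the $\bL^2$ bound I would use duality, splitting $\|\be\|^2=\|\textsf{P}^0\be\|^2+\|\textsf{P}^\perp\be\|^2$ according to the $\bL^2$-orthogonal Helmholtz decomposition of Lemma~\ref{lem:proju}. For the solenoidal part, let $\z\in\V$ solve the dual auxiliary problem $b(\bv,\z)=(\bv,\textsf{P}^\perp\be)$ for all $\bv\in\V$ (well posed by \eqref{eq:inf-sup}); taking $\bv=\be$ and using $b(\be,\Pih\z)=0$ gives $\|\textsf{P}^\perp\be\|^2=b(\be,\z-\Pih\z)\ls\ener{\be}\,\ener{\z-\Pih\z}$. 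Since $\textsf{P}^\perp\be\in\bm{H}_0(\dive^0;\Om)$ is divergence free with vanishing normal trace, Lemma~\ref{lem:goodRegular} with $m=2$ gives $\|\z\|_2\ls\|\textsf{P}^\perp\be\|$, hence $\ener{\z-\Pih\z}\ls h\|\z\|_2\ls h\|\textsf{P}^\perp\be\|$ by Lemma~\ref{lem:interpEst} and $\ka h\ls1$, and therefore $\|\textsf{P}^\perp\be\|\ls h\,\ener{\be}$. For the irrotational part $\textsf{P}^0\be=\nabla\phi$, testing $b(\be,\cdot)=0$ against $\nabla\psi_h\in\nabla U_h^0\subset\V_h^0\subset\V_h$ and using $\curl\nabla\psi_h=\bzero$, $\textsf{P}^\perp\nabla\psi_h=\bzero$, $(\nabla\psi_h)_T=\bzero$ on $\Ga$ gives $b(\be,\nabla\psi_h)=-\ka^2(\be,\nabla\psi_h)=0$, i.e. $(\nabla\phi,\nabla\psi_h)=(\be,\nabla\psi_h)=0$ for all $\psi_h\in U_h^0$; combining this discrete orthogonality with the discrete Helmholtz decomposition of $\Ph^{+}\bu$ furnished by Lemmas~\ref{lem:Vh0}--\ref{lem:HD} and the interpolation estimates of Lemma~\ref{lem:interpEst}, one bounds $\|\textsf{P}^0\be\|$ by the best-approximation quantities $\|\bu-\bv_h\|+h\,\ener{\bu-\bv_h}+h^{1/2}\|(\bu-\bv_h)_T\|_\Ga$ (this is the technical core, see below). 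Altogether, $\|\be\|\ls\|\bu-\bv_h\|+h\,\ener{\bu-\bv_h}+h^{1/2}\|(\bu-\bv_h)_T\|_\Ga+h\,\ener{\be}$.

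Then I would close the coupling: substituting this bound into the energy estimate, using $\ka\|\bu-\bv_h\|\le\ener{\bu-\bv_h}$ and $\ka h^{1/2}\|(\bu-\bv_h)_T\|_\Ga=(\ka h)^{1/2}\ka^{1/2}\|(\bu-\bv_h)_T\|_\Ga\ls(\ka h)^{1/2}\ener{\bu-\bv_h}$, every term is $\ls\ener{\bu-\bv_h}$ except $(\ka h)^2\ener{\be}^2$, which is absorbed by the left-hand side once $\tilde C_0$ is small enough; this is \eqref{eq:Pherror_energy}, and reinserting it into the $\bL^2$ bound gives \eqref{eq:Pherror_l2}. For \eqref{eq:boundPherror} I would write $(\be)_T=(\bu-\bv_h)_T+(\Ph^{+}\bu-\bv_h)_T$ on $\Ga$, apply the scaled local trace and inverse inequalities $\|(\bw_h)_T\|_\Ga\ls h^{-1/2}\|\bw_h\|$ to $\bw_h:=\Ph^{+}\bu-\bv_h\in\V_h$, and use $\|\bw_h\|\le\|\be\|+\|\bu-\bv_h\|$ together with \eqref{eq:Pherror_l2}.

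I expect the main obstacle to be the irrotational part of the duality step: because second-type N\'ed\'elec functions are not globally divergence free, $\be$ is not divergence free and the favourable $\ka$-dependence of Lemma~\ref{lem:goodRegular} cannot be applied to the full dual problem; one must first peel off the genuinely solenoidal part $\textsf{P}^\perp\be$, and then control the gradient remainder $\textsf{P}^0\be$ using only the discrete orthogonality $(\be,\nabla\psi_h)=0$ and the (discrete) Helmholtz decompositions of $\V_h^0$, which is precisely where the analysis diverges from the scalar Helmholtz case. A secondary but essential point is that all of this must be kept wave-number explicit: it is the improved $\ka$-scaling of the auxiliary problem that renders the pollution contribution in the dual problem harmless, so that the coupling closes under the weak mesh condition $\ka h\le\tilde C_0$ rather than under something like $\ka^2 h\ls1$.
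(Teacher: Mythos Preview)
Your energy step and the duality bound $\|\textsf{P}^\perp\be\|\lesssim h\,\ener{\be}$ are correct and match the paper's ingredients, and your derivation of \eqref{eq:boundPherror} from \eqref{eq:Pherror_l2} via the discrete trace/inverse inequality is fine. The genuine gap is exactly where you flag the ``technical core'': the bound on $\|\textsf{P}^0\be\|$. The orthogonality $(\be,\nabla\psi_h)=0$ you derive holds only for $\psi_h\in U_h^0$, i.e.\ against gradients with zero boundary trace, whereas $\textsf{P}^0\be=\nabla\phi$ with $\phi\in H^1(\Om)$, generically $\phi|_\Ga\neq 0$. The harmonic part of $\phi$ is invisible to $\nabla U_h^0$, and no approximation estimate makes $\|\nabla(\phi-\psi_h)\|$ small for $\psi_h\in U_h^0$ without extra regularity on $\phi$ (which you do not have). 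Appealing to Lemma~\ref{lem:HD} does not rescue this: the field $\w^0$ there lies in $\bH_0(\curl;\Om)\cap\bH(\dive^0;\Om)$, so it has zero \emph{tangential} but generally nonzero \emph{normal} trace, hence $\textsf{P}^0\w^0\neq\bzero$ and you cannot read off $\textsf{P}^0$ of the decomposed object.

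The paper avoids splitting $\be$ via $\textsf{P}^0\oplus\textsf{P}^\perp$ altogether. It works instead with $\bPhi_h:=\Ph^{+}\bu-\bv_h\in\V_h$, passes to $\bPhi_h^c\in\V_h^0$ by Lemma~\ref{lem:Vh0} (cost $h^{1/2}\|\bPhi_{h,T}\|_\Ga$), decomposes $\bPhi_h^c=\w_h^0+\nabla r_h^0=\w^0+\nabla r^0$ via Lemma~\ref{lem:HD}, and expands $\|\be\|^2+\|\w^0\|^2=(\be+\w^0,\be+\w^0)-2\re(\be,\w^0)$. The key is that $(\be+\w^0,\nabla r_h^0)=0$: your orthogonality kills $(\be,\nabla r_h^0)$, and $(\w^0,\nabla r_h^0)=0$ because $r_h^0\in H_0^1(\Om)$ vanishes on $\Ga$ while $\w^0$ is divergence-free --- so it is the \emph{test function}, not $\w^0$, that removes the boundary term. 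A duality bound on $(\be,\w^0)$ then yields $\|\be\|\lesssim(\text{best approx.})+h^{1/2}\|(\be)_T\|_\Ga$, which is closed against a separately proved estimate $\|(\be)_T\|_\Ga\lesssim(\text{best approx.})+\ka^{1/2}\|\be\|$ under $\ka h\le\tilde C_0$. In short, the missing idea is to trade the uncontrollable $\|\textsf{P}^0\be\|$ for the boundary trace and close an $\bL^2\leftrightarrow$trace pair rather than your $\bL^2\leftrightarrow$energy pair.
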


	The next lemma  gives some negative norm error estimates for the elliptic projections, which will be used in the modified duality argument.
	\begin{lemma}\label{lem:SpherrEst}
		Suppose that $\Om $ is $ C^{p+1}$ 	and $\ka h  \leq \tilde C_0$, then the following estimate holds:
		\eq{ 
			\big\| \textsf{P}^{\perp}\left(\bu -\Ph^{\pm}\bu \right)\big\|_{-p+1,\ka}  \ls   h ^{p} \inf_{\bv_{h} \in \V_h}\ener{ \bu -\bv_{h}},
			\quad \forall \bu \in \V.  \label{eqSvphv1} 
		}
	\end{lemma}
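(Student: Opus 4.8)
The plan is to use a duality (Aubin–Nitsche) argument adapted to the auxiliary elliptic problem \eqref{eq:auxProb}. Fix $\bu\in\V$ and write $\be:=\bu-\Ph^{+}\bu$ (the argument for $\Ph^{-}$ is analogous, using the adjoint problem). By definition of the negative norm, it suffices to estimate $\big(\textsf{P}^{\perp}\be,\bv\big)$ for an arbitrary $\bv\in\bH^{p-1}(\Om)$ and then divide by $\norm{\bv}_{p-1,\ka}$. Since $\textsf{P}^{\perp}\be\in\bX\subset\bm H(\dive^0;\Om)$, we may replace $\bv$ by $\textsf{P}^{\perp}\bv$ without changing the pairing, and — using that $\textsf{P}^{\perp}$ is bounded on $\bH^{p-1}$ (it acts by truncation/Helmholtz projection and commutes with the regularity scale) — we may assume $\bv\in\bX\cap\bH^{p-1}(\Om)$ with $\norm{\bv}_{p-1,\ka}\lesssim\norm{\bv}_{p-1,\ka}$, i.e. $\dive\bv=0$, $\bv\cdot\n=0$ on $\Ga$.

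Next I introduce the dual solution $\w=\w(\bv)\in\V$ to the auxiliary problem with right-hand side $(\fp,\gp)=(\textsf{P}^{\perp}\bv,\bzero)$ (for $\Ph^{+}$ one uses the conjugate form $b(\bv_h,\w)=\overline{(\textsf{P}^{\perp}\bv,\bv_h)}$; this is the same elliptic problem up to complex conjugation). Testing the weak form of the dual problem against $\be$ and using the Galerkin orthogonality $b(\be,\bv_h)=0$ for all $\bv_h\in\V_h$ from \eqref{eq:EP-orth}, I get
\eq{\big(\textsf{P}^{\perp}\bv,\be\big)=b(\be,\w)=b\big(\be,\w-\Pih\w\big),\nonumber}
so the left side is bounded by $\ener{\be}\,\ener{\w-\Pih\w}$ via continuity \eqref{eq:bCont}. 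Now Lemma~\ref{lem:Pherror} gives $\ener{\be}\lesssim\inf_{\bv_h}\ener{\bu-\bv_h}$, and the interpolation estimate \eqref{eq:interpener} together with the regularity Lemma~\ref{lem:goodRegular} (case $\fp=\textsf{P}^{\perp}\bv$, $\fp\cdot\n=0$, $\gp=\bzero$, Remark~\ref{rem:Sfstab}(a), applied with $m=p+1$, which needs $\Om\in C^{p+1}$) gives
\eq{\ener{\w-\Pih\w}\lesssim h^{p}\norm{\w}_{p+1}\lesssim h^{p}\norm{\textsf{P}^{\perp}\bv}_{p-1,\ka}\lesssim h^{p}\norm{\bv}_{p-1,\ka}.\nonumber}
Dividing by $\norm{\bv}_{p-1,\ka}$ and taking the supremum over $\bv$ yields \eqref{eqSvphv1}. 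Note that $\ka h\le\tilde C_0$ is exactly the hypothesis needed both for $\Ph^{\pm}$ to be well-defined (Lemma~\ref{lem:Pherror}) and for \eqref{eq:interpener} to apply.

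The main obstacle I anticipate is bookkeeping the regularity scale for the dual solution: one must verify that the data $(\textsf{P}^{\perp}\bv,\bzero)$ lands in the hypothesis class of Lemma~\ref{lem:goodRegular} with $m=p+1$ — in particular that $\textsf{P}^{\perp}\bv\in\bH^{p-1}(\dive^0;\Om)$ with $(\textsf{P}^{\perp}\bv)\cdot\n=0$ on $\Ga$ — and that the resulting bound $\norm{\w}_{p+1}\lesssim\norm{\textsf{P}^{\perp}\bv}_{p-1,\ka}$ matches the $\ka$-weighting in the negative norm. This requires the sharp form $\|\w\|_m\lesssim\|\fp\|_{m-2,\ka}$ from Remark~\ref{rem:Sfstab}(a) (with $m-2=p-1$), and the mild fact that $\textsf{P}^{\perp}$ preserves $\bH^{p-1}$-regularity with a $\ka$-independent constant, which follows because on the complement of $\ker(\curl)$ the operator is (up to the finite-rank truncation $\Sm$) the identity and $\Sm$ is $\bH^m$-stable by Lemma~\ref{lem:Sstab}. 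Everything else is a direct chaining of the already-established continuity, orthogonality, interpolation and regularity estimates.
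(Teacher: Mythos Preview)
Your proposal is correct and follows essentially the same route as the paper: a duality argument in which the dual solution $\w$ solves the auxiliary elliptic problem \eqref{eq:auxProb} with data $(\textsf{P}^{\perp}\bv,\bzero)$, the regularity bound $\|\w\|_{p+1}\lesssim\|\textsf{P}^{\perp}\bv\|_{p-1,\ka}$ comes from Remark~\ref{rem:Sfstab}(a), and one then chains Galerkin orthogonality \eqref{eq:EP-orth}, continuity \eqref{eq:bCont}, the interpolation estimate \eqref{eq:interpener}, and the quasi-optimality \eqref{eq:Pherror_energy}.

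One small wobble: your justification that $\textsf{P}^{\perp}$ preserves $\bH^{p-1}$-regularity invokes $\Sm$ and Lemma~\ref{lem:Sstab}, but $\Sm$ plays no role here; $\textsf{P}^{\perp}$ is the Helmholtz projection onto $\bH_0(\dive^0;\Om)$, and its boundedness on $\bH^{p-1}$ (with $\ka$-independent constant) follows from elliptic regularity for the Neumann problem defining $\textsf{P}^{0}\bv=\nabla\phi$. The paper uses this same bound $\|\textsf{P}^{\perp}\bv\|_{p-1,\ka}\lesssim\|\bv\|_{p-1,\ka}$ without comment.
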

	\begin{proof}
		We only prove for the estimate of $\big\| \textsf{P}^{\perp}\left(\bu -\Ph^{-}\bu \right)\big\|_{-p+1,\ka}$, the proof for the other estimate is similar. For any $\bv\in \bH^{p-1}(\Om)$, from Remark~\ref{rem:Sfstab} ({\rm{a}}), we know that there exists $\w \in \bH^{p+1}(\Om) \cap \bm{H}(\dive^0;\Om)$, such that
		$$
		b(\w, \bph )=\left( \textsf{P}^\perp\bv, \bph \right) \quad \forall \bph \in \V,
		$$
		and $\|\w\|_{p+1} \ls \|\textsf{P}^\perp\bv\|_{p-1,\ka}\ls \|\bv\|_{p-1,\ka}$. By using \eqref{eq:EP-orth} and \eqref{eq:interpener} we have
		$$
		\begin{aligned}
			\big|\big( \textsf{P}^{\perp}(\bu -\Ph^{-}\bu),\bv\big)\big|&=\big|\big( \textsf{P}^{\perp}(\bu -\Ph^{-}\bu),\textsf{P}^\perp\bv\big)\big|
			=\big|\big( \textsf{P}^\perp\bv, \bu -\Ph^{-}\bu\big)\big| \\
			& =b\left(\w,\bu -\Ph^{-}\bu \right)  =b\left(\bw-\Pih \bw,\bu  -\Ph^{-}\bu \right) \\
			& \ls  \ener{\bw- \Pih \bw }  \cdot  \ener{ \bu -\Ph^{-}\bu }  \ls   h ^{p}\|\bw\|_{p+1}  \ener{ \bu -\Ph^{-}\bu } \\
			& \ls   h ^{p}\|\bv\|_{p-1,\ka} \ener{ \bu -\Ph^{-}\bu},
		\end{aligned}
		$$
		which, together with  \eqref{eq:Pherror_energy}, completes the proof of the lemma. 
	\end{proof}
	\begin{remark}\label{rem:SPerr}
		This error estimate in the negative norm is quite similar to the standard negative norm error estimate of higher-order FEM for the elliptic problems (see e.g. \cite{brenner2008mathematical}).		
	\end{remark}

	\section{Preasymptotic error estimates for the EEM}\label{sc4}
 
	In this section, we first establish the $\ka$-explicit regularity of the solution to the Maxwell problem \eqref{Maxwell} by decomposing it into a  non-oscillatory  elliptic part and an oscillatory yet smooth part. Based on this decomposition, we then establish wave-number-explicit stability estimates for the $p$-th order EEM \eqref{eq:EEM} using the ``modified duality argument" \cite{ZhuWu2013II}. Finally, we use these stability estimates to derive preasymptotic error estimates for the EEM.

	\subsection{$\ka$-explicit regularity by decomposition  for the Maxwell problem} 
	The following theorem says that the solution  {$\E$ to the Maxwell problem \eqref{Maxwell}}
	can be decomposed into the sum of an elliptic part and a smooth part:  $\E=\Ee +\Ea$,
	where $\Ee$ is non-oscillatory in the sense that its $\bH^2$-norm  is uniformly bounded for all ${\ka}$ and $\Ea$
	is oscillatory but the $\bH^j$-bound of $\Ea$ is available for any integer $j \geq 0$ when the domain is sufficiently smooth.
	\begin{theorem}[Regularity decomposition]\label{thm:reg-dec}
		Suppose that  $\Om$ is $C^{m}$ for some integer $m \geq 2$ and  $  \f  \in \bH(\dive^0;\Om)$, $\g=\g_T\in \bH^{\frac12}(\Ga)$. Let $\E$ be the solution to  problem \eqref{Maxwell}, then there exists a splitting $\E=\Ee +\Ea$ such that $\Ee \in \bH^{1}(\Om)$ and $\Ea \in \bH^{m}(\Om) $ satisfying 
		\eq{  \ka^{1-m}  \|\Ea\|_{m}  +  \ka^2 \|\Ee\|+  \ka\|\Ee\|_{1} \ls  \|\f\| + \|\g\|_{\frac{1}{2},\Ga}. \label{eq:EeStab12} 
		}
		Furthermore, if   $\f\cdot \n \in \bH^{ \frac{1}{2}}(\Ga)$ and $\dive_{\Ga} \g \in \bH^{\frac12}(\Ga)$,  then $\Ee \in \bH^{2}(\Om)$ and
		\eq{     
			\|\Ee\|_{2} \ls  	\|\f\|    + \ka^{-1}\|\f\cdot \n\|_{\frac{1}{2},\Ga} 
			+\|\g\|_{\frac{1}{2},\Ga} + \ka^{-1} \| \dive_{\Ga} \g\|_{\frac{1}{2},\Ga}. \label{eq:EeStab}}
	\end{theorem}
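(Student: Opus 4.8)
The plan is to build the splitting out of the truncation operator $\Sm$ and the auxiliary elliptic problem of Section~\ref{sc3}. Taking the divergence of \eqref{eq:eq} and using $\dive\f=0$ gives $\dive\E=0$, so $\E\in\bH(\dive^0;\Om)$; Theorem~\ref{thm:stability} then supplies $\ka\norm\E+\norm{\curl\E}\ls\norm\f+\norm\g_\Ga$ and $\ka\norm\E_1+\norm{\curl\E}_1\ls\ka(\norm\f+\norm\g_\Ga)+\norm\g_{\frac12,\Ga}$, both of which are used repeatedly below.

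For the oscillatory but smooth part I would take the low-frequency content of $\E$: set $\Ea:=\Sm\textsf{P}^{\perp}\E$ (completed by a smooth low-frequency lifting that handles the curl-free component $\textsf{P}^{0}\E$, whose Neumann datum $\E\cdot\n$ is controlled through \eqref{eq:divEt} because $\dive\f=0$), and $\Ee:=\E-\Ea$. Since $\la_j\le\cs\ka^2$ for $j\le N$, the term $\Sm\textsf{P}^{\perp}\E$ is a finite combination of the eigenfunctions $\bph_j$, which lie in $\bH^m(\Om)$ when $\Om$ is $C^m$; hence $\Ea\in\bH^m(\Om)$. Then Lemma~\ref{lem:Sstab} and $\ka\norm\E\ls\norm\f+\norm\g_\Ga$ give $\norm{\Ea}_m\le\norm{\Ea}_{m,\ka}\ls\ka^m\norm\E\ls\ka^{m-1}(\norm\f+\norm\g_{\frac12,\Ga})$, which is the first term of \eqref{eq:EeStab12}.

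The heart of the proof is the estimate of $\Ee$. First one checks $\Sm\textsf{P}^{\perp}\Ee=\bzero$ (idempotency of $\Sm$ and $\textsf{P}^{\perp}$ and the fact that $\Sm\textsf{P}^{\perp}\E$ already lies in their common range), so the extra term $\cs\ka^2\Sm\textsf{P}^{\perp}\w$ in \eqref{eq:auxProb} drops at $\w=\Ee$ and $\Ee$ solves the auxiliary elliptic problem with modified data: a divergence-free $\f'$ differing from $\f$ by a smooth low-frequency term, and $\g'=\g+\ii\ka\la\,\Ea|_\Ga$ (using $\curl(\Sm\textsf{P}^{\perp}\E)\times\n=\bzero$ on $\Ga$). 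These data are controlled via the spectral bound $\norm{(\curl\curl-\ka^2)\Sm\textsf{P}^{\perp}\E}\le\cs\ka^2\norm{\Sm\textsf{P}^{\perp}\E}\ls\ka(\norm\f+\norm\g_\Ga)$, the $\bX$-trace inequality \eqref{eq:trace} applied to $\Sm\textsf{P}^{\perp}\E\in\bX$ (giving $\norm{\Ea|_\Ga}_\Ga\ls\norm{\Sm\textsf{P}^{\perp}\E}^{1/2}\norm{\curl\Sm\textsf{P}^{\perp}\E}^{1/2}\ls\ka^{-1/2}(\norm\f+\norm\g_\Ga)$), and the $\curl\E$-regularity above. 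Feeding them into the regularity estimates for the auxiliary problem (Lemma~\ref{lem:goodRegular}), and sharpening the $\g$-dependence by a modified duality argument against the adjoint auxiliary problem --- whose solution, by Lemma~\ref{lem:goodRegular}, gains two powers of $\ka$ in the $\bL^2\to\bL^2$ estimate --- I expect to obtain $\ka^2\norm\Ee+\ka\norm\Ee_1\ls\norm\f+\norm\g_{\frac12,\Ga}$, completing \eqref{eq:EeStab12}.

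Finally, under the extra hypotheses $\f\cdot\n,\ \dive_\Ga\g\in\bH^{\frac12}(\Ga)$, one upgrades $\Ee$ to $\bH^2(\Om)$: since $\dive\Ee=0$, $\mathcal{P}\Ee=\curl\curl\Ee=\f'+\ka^2\Ee$, and the elliptic regularity for the $\mathcal{P}$-system with impedance-type boundary conditions (exactly as in the proof of \eqref{eq:Hmstab}, via \cite{chen2024regularity,costabel2010corner}) bounds $\norm\Ee_2$ by $\norm{\f'}+\norm{\g'}_{\frac12,\Ga}+\norm{\f'\cdot\n-\dive_\Ga\g'}_{\frac12,\Ga}$; the right-hand side is estimated using \eqref{eq:divEt} for $\Ee$, the smoothness of $\Ea$ (so that, e.g., $\norm{\Ea\cdot\n}_{\frac12,\Ga}\ls\norm{\Ea}_2$ is already controlled), and the bounds on $\Ee$ just obtained, giving \eqref{eq:EeStab}. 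The hard part throughout is the sharp wave-number dependence of the $\Ee$-estimates --- in particular producing the factor $\ka^2$ (resp.\ $\ka$) in front of $\norm\Ee$ (resp.\ $\norm\Ee_1$) with only $\norm\g_{\frac12,\Ga}$ on the right, rather than the $\ka^{1/2}\norm\g_\Ga$ that a bare energy estimate would deliver; this is what forces the duality argument against the adjoint auxiliary problem, and it is the ellipticity of the auxiliary operator off $\ker(\curl)$ (Lemma~\ref{lem:bcoev}) together with the spectral gap $\la_{N+1}>\cs\ka^2$ built into $\Sm$ that makes it go through.
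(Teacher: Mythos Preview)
Your approach has a real gap in the $\Ee$-estimate. With $\Ea:=\Sm\textsf{P}^{\perp}\E$ (plus any gradient correction), the source in the auxiliary problem satisfied by $\Ee$ becomes $\fp=\f-(\curl\curl-\ka^2)\Sm\textsf{P}^{\perp}\E$, and your own spectral bound gives only $\norm{\fp}\ls\norm\f+\ka^2\norm\E\ls\ka(\norm\f+\norm\g_\Ga)$ --- a full power of $\ka$ too large. Feeding this into \eqref{bSfgeqStabEnerg} of Lemma~\ref{lem:goodRegular} yields $\ka^2\norm{\Ee}\ls\norm{\fp}+\ka^{1/2}\norm{\gp}_\Ga\ls\ka(\norm\f+\norm\g_\Ga)$, not \eqref{eq:EeStab12}. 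The duality fix you sketch is aimed at the $\g$-dependence, but the loss is already present in the volume term: set $\g=\bzero$ and observe that $\norm{\fp}\ls\ka\norm\f$ persists. Even a more careful duality (expressing $(\la_j-\ka^2)\mu_j$ through the variational identity $a(\E,\bph_j)=(\f,\bph_j)$) leaves residual boundary pairings of the form $\ii\ka\la\langle\E_T,(\Sm\textsf{P}^{\perp}\z)_T\rangle$ and $\langle\gp,\z_T\rangle$ that yield only $\norm{\Ee}\ls\ka^{-1}\norm\f$, still one power short. The root cause is that your $\Ea$ is built from $\E$ without reference to the equation, so $(\curl\curl-\ka^2)\Ea$ has no reason to approximate $\f$, and the residual falls back as a large source for $\Ee$.

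The paper reverses the construction: it defines $\Ee$ \emph{first}, as the solution of the auxiliary problem \eqref{eq:bVP} with the \emph{unmodified} source $\f$ and a high-frequency piece $\gE$ of the boundary datum. The splitting $\g=\gE+\gA$ is obtained by harmonically extending each component of $\g$ into $\Om$ and applying the Melenk--Sauter frequency filters $H_\Om,L_\Om$, which gives $\norm{\gE}_\Ga\ls\ka^{-1/2}\norm\g_{\frac12,\Ga}$ while $\norm{\gA}_{j+\frac12,\Ga}\ls\ka^{j}\norm\g_{\frac12,\Ga}$ for $0\le j\le m-1$. Lemma~\ref{lem:goodRegular} then yields the bounds on $\Ee$ in \eqref{eq:EeStab12}--\eqref{eq:EeStab} directly, with no loss. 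Setting $\Ea:=\E-\Ee$, one finds that $\Ea$ solves the \emph{original} Maxwell problem with source $\cs\ka^2\Sm\textsf{P}^{\perp}\Ee$ and boundary data $\gA$; since $\ka^2\norm{\Ee}$ is \emph{already} controlled, the source is $\bO(\norm\f+\norm\g_{\frac12,\Ga})$ and, by Lemma~\ref{lem:Sstab}, smooth of every order, while $\gA$ is smooth by construction. Theorem~\ref{thm:stability} then delivers the $\bH^m$-bound on $\Ea$. Thus $\Sm$ enters the decomposition not by acting on $\E$, but through the auxiliary operator in the \emph{definition} of $\Ee$; the resulting $\ka^{-2}$ gain for $\norm{\Ee}$ is what makes the source for $\Ea$ small.
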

	\begin{proof}
		We begin by introducing a boundary frequency splitting of $\g$, inspired by the construction in \cite[Section~6]{melenk2023wavenumber}.
		Let $\g = \g_T = (g_1,g_2,g_3)^T \in \bH^{\frac12}(\Ga)$.  
		For each component $g_i \in H^{1/2}(\Ga)$ $(i=1,2,3)$, we define its harmonic extension $\ext g_i \in H^{1}(\Om)$ as the  {weak} solution of
		\begin{equation}\label{eq:harmonic-extension}
			\begin{cases}
				\Delta (\ext g_i) = 0, & \text{in }\Om, \\
				\ext g_i|_{\Ga} = g_i, & \text{on }\Ga.
			\end{cases}
		\end{equation}
		By elliptic regularity (\cf \cite{mclean2000strongly}), the operator $\ext$ extends continuously as $
		\ext : H^{s-\frac12}(\Ga) \to H^{s}(\Om), 
		\;  1  \le s \le m,
		$
		and hence
		\begin{equation}\label{eq:ext-regularity}
			\|\ext g_i\|_{s} \ls \|g_i\|_{s - \frac12,\Ga}, 
			\qquad  1 \le s \le m.
		\end{equation} 
		
		Next, following \cite[Section~4]{melenk2011},  {using the} frequency filters $H_{\Om}$ and $L_{\Om}$   there, we have  the splitting
		\begin{equation}\label{eq:frequency-splitting}
			\ext g_i = H_{\Om} \ext g_i + L_{\Om} \ext g_i, 
			\qquad i=1,2,3.
		\end{equation}
		By Lemmas~4.2--4.3 of \cite{melenk2011}, the two components satisfy the bounds
		\begin{align}
			\|H_{\Om} \ext g_i\|_{s'} &\ls \ka^{s'-s} \|\ext g_i\|_{s}, 
			& \forall\, 0 \le s' \le s, \label{eq:Hfrequency-bounds}\\
			\|L_{\Om} \ext g_i\|_s &\ls \ka^{s-1} \|\ext g_i\|_1, 
			& \forall\, s \ge 1. \label{eq:Lfrequency-bounds}
		\end{align}
		
		Define
		\[
		\GE = (H_{\Om} \ext g_1, H_{\Om} \ext g_2, H_{\Om} \ext g_3)^T, 
		\qquad 
		\GA = (L_{\Om} \ext g_1, L_{\Om} \ext g_2, L_{\Om} \ext g_3)^T.
		\]
		We set $\gE = (\GE)_{T}$ and $\gA = (\GA)_{T}$.  
		Clearly, $\g = \gE + \gA$ on $\Ga$.  
		
		By the trace inequality together with \eqref{eq:Hfrequency-bounds}--\eqref{eq:Lfrequency-bounds} and \eqref{eq:ext-regularity}, we obtain
		\eq{  
			&\|\gE\|_{\Ga} 
			\ls \|\GE\|_{1}^{1/2}\|\GE\|^{1/2} {\ls \ka^{-\frac12}\sum_{i=1}^3\norm{\ext g_i}_1}
			\ls \ka^{-\frac12}\sum_{i=1}^3\norm{g_i}_{\frac12,\Gamma}  \ls \ka^{-\frac12}\|\g\|_{\frac12,\Ga};  \label{eq:gEL2}  \\
			&	\|\gE\|_{j+\frac12,\Ga}  \ls \|\GE\|_{j+1} 
			\ls \sum_{i=1}^3\norm{\ext  g_i}_{j+1 } \ls \|\g\|_{j+\frac12,\Ga}, 
			\qquad 0 \le j \le 1; \label{eq:gE-regularity}  \\
			& 	\|\gA\|_{j+\frac12,\Ga} \ls  \|\GA \|_{j+1} \ls \ka^j\sum_{i=1}^3\norm{ \ext  g_i}_{1}  \ls \ka^j \|\g\|_{\frac12,\Ga}, 
			\qquad 0 \le j \le m-1.   \label{eq:gA-regularity}
		}
		A triangle inequality and \eqref{eq:gA-regularity} yields that
		\eq{
			\|\dive_{\Ga} \gE\|_{ \frac12,\Ga}  
			\ls 	\|\dive_{\Ga} \g \|_{ \frac12,\Ga} + 	\| \gA\|_{\frac32,\Ga}
			\ls 	\|\dive_{\Ga} \g \|_{ \frac12,\Ga} +  \ka 	\| \g \|_{\frac12,\Ga}.    \label{eq:divegE}
		}
		
		We now turn to the regularity decomposition of the Maxwell solution $\E$  by following the idea proposed in \cite{li2023higherorderpml}.
		Let $\Ee \in \V$ be the solution of
		\[
		b(\Ee,\bv) = (\f,\bv) + \inn{\gE , \bv_T} \qquad \forall \bv \in \V.
		\]
		Then the estimates for $\Ee$ in \eqref{eq:EeStab12}--\eqref{eq:EeStab} follow from Lemma~\ref{lem:goodRegular} together with \eqref{eq:gEL2}--\eqref{eq:gE-regularity} and \eqref{eq:divegE}.
		
		Define $\Ea = \E - \Ee$.  
		It follows that $\Ea$ solves
		\[
		a(\Ea,\bv) 
		= \cs \ka^2\big( \Sm \textsf{P}^{\perp} \Ee, \bv\big) 
		+ \inn{\gA , \bv_T}, 
		\qquad \forall  \bv \in \V.
		\]
		By Theorem~\ref{thm:stability},  the stability of $\Sm$ and $\textsf{P}^{\perp}$, the estimate of $\Ee$  and  \eqref{eq:gA-regularity},  we deduce
		\begin{align}
			\ka^{-1}\|\Ea\|_{1} + \|\Ea\| 
			&\ls \cs\ka \|\Sm \textsf{P}^{\perp} \Ee\|  + \ka^{-1}\|\gA\|_{\Ga} + \ka^{-2}\|\gA\|_{\frac12,\Ga} 
			\ls \ka^{-1}(\|\f\| + \|\g\|_{\frac12,\Ga}), \notag \\
			\|\Ea\|_{m} 
			&\ls \cs \ka^{m+1} \Big( 
			\|\Sm \textsf{P}^{\perp} \Ee\| 
			+  \ka^{-m+1}\|\Sm \textsf{P}^{\perp} \Ee\|_{m-2,\ka}
			+ \ka^{-m}\|(\Sm \textsf{P}^{\perp} \Ee)\cdot\n\|_{m-\frac32,\Ga} \Big)  \notag \\
			&\quad + \ka^{m-1}\|\gA\|_{\Ga} + \|\gA\|_{m-\frac32,\ka,\Ga} + \ka^{-1}\|\dive_\Ga \gA\|_{m-\frac32,\ka, \Ga}  \notag \\
			&\ls \ka^{m-1}\big(\|\f\| + \|\g\|_{\frac12,\Ga}\big). \label{eq:beStab}
		\end{align}
		This completes the proof.
	\end{proof}
	
	\begin{remark}\label{rem:reg-dec}
		
		{\rm (a)} As discussed in \cite{galkowski2024sharp,li2023higherorderpml}, the above proof presents a more technically efficient method  compared to existing regularity splitting approaches, such as the Fourier technique \cite{melenk2010dtn,melenk2011,melenk2020wavenumber,melenk2023wavenumber} and iterative techniques found in \cite{chaumont2019general,chaumont2022frequency}.
		
		{\rm (b)}  
			Alternatively, one may employ  the boundary frequency filters in \cite[Eq. (6.16)]{melenk2023wavenumber} to obtain the same estimates but under an analytic boundary regularity assumption.
			The technique, however, does not extend to domains with merely $C^m$ boundaries,  since the construction involves the harmonic extension of scalar surface potentials and analytic continuation techniques, which cannot be applied under our smoothness assumption.
			
			{\rm (c)} When $\dive\f \neq 0$,  the estimates for  $\|\Ea\|_{m}$ and  $ \|\Ee\|$ in \eqref{eq:EeStab12} still hold.  The estimates for  $\ka \|\Ee\|_{1}$  in \eqref{eq:EeStab12} still holds if $\ka^{-1}\|\dive\f\|$ is added to the right-hand side of \eqref{eq:EeStab12}. And the estimate \eqref{eq:EeStab} remain valid if  $\ka^{-1}\|\dive\f\| +\ka^{-2}\|\dive\f\|_1 $ is added to  its right-hand side and  $\Om$ is $C^{3}$ (see Remark~\ref{rem:reg} {\rm (b)}).
		\end{remark}
		
		\subsection{Stability estimates of EEM}

		The following theorem gives the wave-number-explicit stability of the EEM solution.   
		
		\begin{theorem}\label{thm:EEMStability}
			Suppose that $\Om \in C^{p+1}$, $\f\in \bL^2(\Om)$  and $\g\in\bL^2_t(\Ga)$. Let $\E_h$	 be the solution to \eqref{eq:EEM}.  There exist a constant  $C_0>0$ independent of $\ka$ and $h$ such that if  $\ka^{2p+1} h^{2p}\le C_0$,  then
			\begin{equation}
				\|\curl\E_h\| +\ka \|\E_h\|+\ka \|{\Eht}\|_{\Ga} \ls \|\f\| +  \|\g\|_{\Ga}.
				\label{eq:EEMStability}
			\end{equation} 
			And as a consequence, the EEM \eqref{eq:EEM} is well-posed.
		\end{theorem}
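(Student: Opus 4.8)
The strategy is the classical ``modified duality argument'' of \cite{ZhuWu2013II}, adapted to the Maxwell setting via the auxiliary elliptic operator and the elliptic projections $\Ph^{\pm}$ already developed above. First I would reduce to the divergence-free case: writing $\f = \textsf{P}^0\f + \textsf{P}^\perp\f$ and absorbing the gradient part into a harmless $\ka^{-2}$-scaled correction (as in Remark~\ref{rem:reg}(b)), so that we may assume $\dive\f = 0$; correspondingly, by the discrete Helmholtz decomposition, we split $\E_h = \nabla r_h^0 + \w_h$ with $\w_h \in \V_h$ essentially discretely divergence-free (using Lemma~\ref{lem:HD} and $\nabla U_h^0 \subset \V_h^0$), test the EEM equation \eqref{eq:EEM} against $\nabla\psi_h$ to control the gradient part, and reduce the stability question to bounding $\ener{\w_h}$.

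\textbf{Main duality step.} For the reduced quantity I would take the ``dual'' problem: let $\bz \in \V\cap\bm{H}_0(\dive^0;\Om)$ solve $b(\bv,\bz) = (\bv,\ka^2\textsf{P}^\perp\w_h)$ for all $\bv\in\V$ (note the crucial use of $b$, not $a$, so that Lemma~\ref{lem:bcoev} gives coercivity on the divergence-free subspace and Lemma~\ref{lem:goodRegular} gives $\ka$-robust regularity $\|\bz\|_{p+1}\ls \ka^{p-2}\|\ka^2\textsf{P}^\perp\w_h\|$, i.e. a gain of $\ka^{-1}$ relative to the naive estimate). Using the Galerkin orthogonality \eqref{eq:orthogonality} for $a$, the definition \eqref{eq:defbuv} of $b$ versus \eqref{eq:a} of $a$ (they differ by the smoothing term $\cs\ka^2(\Sm\textsf{P}^\perp\cdot,\textsf{P}^\perp\cdot)$), and the orthogonality \eqref{eq:EP-orth} of the elliptic projection, I would rewrite
\[
\ka^2\|\textsf{P}^\perp\w_h\|^2 = b(\w_h,\bz) = b\big(\w_h, \bz - \Ph^{-}\bz\big) + \big(\text{computable lower-order terms}\big).
\]
The first term is estimated by the negative-norm bound Lemma~\ref{lem:SpherrEst}: $b(\w_h,\bz-\Ph^{-}\bz)$ is controlled by $\|\textsf{P}^\perp(\bz - \Ph^-\bz)\|_{-p+1,\ka}$ paired against $\w_h$, times factors involving $h^p$ and $\|\bz\|_{p+1}$. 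The residual terms coming from the smoothing operator and from $a$ versus $b$ are handled by the stability of $\Sm$ (Lemma~\ref{lem:Sstab}), the elliptic-projection error estimates (Lemma~\ref{lem:Pherror}), and the continuous stability Theorem~\ref{thm:stability}. Collecting powers of $\ka$ and $h$, the bound on $\ka\|\textsf{P}^\perp\w_h\|$ picks up a factor $\ka^{p+1}h^p$ times $\ener{\w_h}$ (or $\|\f\|+\|\g\|_\Ga$), plus a clean ``data'' term.

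\textbf{Closing the estimate.} Next I would control $\ener{\w_h}$ itself. Using the G\aa rding-type inequality for $a$ (or testing \eqref{eq:EEM} against $\w_h$ and taking real/imaginary parts) gives
\[
\ener{\w_h}^2 \ls \ka^2\|\textsf{P}^\perp\w_h\|^2 + \ka\|\w_{h,T}\|_\Ga^2 + \big|(\f,\w_h) + \inn{\g,\w_{h,T}}\big|,
\]
where the boundary term $\ka\|\w_{h,T}\|_\Ga^2$ is absorbed using Lemma~\ref{lem:Vh0} (the $\bH_0(\curl)$-approximation of $\w_h$, whose tangential trace vanishes) together with the continuous bound, picking up another small factor. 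Substituting the duality bound on $\ka^2\|\textsf{P}^\perp\w_h\|^2$, one obtains $\ener{\w_h}^2 \ls (\ka^{p+1}h^p)^2\,\ener{\w_h}^2 + \big(\text{data}\big)\ener{\w_h} + \big(\text{data}\big)^2$; hence, \emph{provided} $\ka^{2p+2}h^{2p}$ — and in fact, after optimizing the $\ka$-powers in the various Sobolev norms appearing in $\|\bz\|_{p+1}$ and the trace terms, only $\ka^{2p+1}h^{2p}$ — is sufficiently small, a kickback (absorption) argument yields $\ener{\w_h}\ls \|\f\|+\|\g\|_\Ga$. Combining with the gradient part estimate gives \eqref{eq:EEMStability}. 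Well-posedness then follows: \eqref{eq:EEMStability} shows the homogeneous discrete problem has only the trivial solution, and since $\V_h$ is finite-dimensional, the EEM is uniquely solvable.

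\textbf{Main obstacle.} The delicate point is the sharp bookkeeping of $\ka$-powers so that the mesh condition comes out as $\ka^{2p+1}h^{2p}$ rather than $\ka^{2p+2}h^{2p}$: this requires using the \emph{full} $\ka$-weighted norms $\|\bz\|_{p-1,\ka}$, $\|\bz\|_{p+1}$ from Lemma~\ref{lem:goodRegular} (which exhibit the $\ka^{-1}$ gain of the auxiliary problem over the original one) in tandem with the negative-norm estimate Lemma~\ref{lem:SpherrEst}, and carefully splitting the data norm $\|\f\|$ via $\|\textsf{P}^\perp\f\|\le\|\f\|$ so no stray $\ka$ appears. A secondary technical nuisance is that $\w_h$ is only \emph{discretely} divergence-free, so $\textsf{P}^\perp\w_h \ne \w_h$ and one must track the difference $\|\w_h - \textsf{P}^\perp\w_h\| = \|\textsf{P}^0\w_h\|$, which is $\bO(h)$-small by the discrete-vs-continuous Helmholtz decomposition estimate \eqref{HD1approx} and hence harmless after absorption.
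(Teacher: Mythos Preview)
Your proposal contains a genuine gap in the main duality step. You set up the dual problem using the auxiliary form $b$, i.e.\ $b(\bv,\bz) = (\bv,\ka^2\textsf{P}^\perp\w_h)$, and then claim that Lemma~\ref{lem:goodRegular} yields $\|\bz\|_{p+1}\lesssim \ka^{p-2}\|\ka^2\textsf{P}^\perp\w_h\|$. But Lemma~\ref{lem:goodRegular} (see \eqref{bSfgEm} and Remark~\ref{rem:Sfstab}(a)) gives $\|\bz\|_{p+1}\lesssim \|\f'\|_{p-1,\ka}$, which requires the data $\f'=\ka^2\textsf{P}^\perp\w_h$ to lie in $\bH^{p-1}(\Om)$. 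Since $\w_h$ is a finite element function, $\curl\w_h$ is only piecewise polynomial (hence merely $\bL^2$), and the embedding \eqref{eq:emb1} gives $\textsf{P}^\perp\w_h\in\bH^1(\Om)$ but no better. Thus for $p\ge 3$ the claimed $\bH^{p+1}$ regularity of $\bz$ is simply unavailable, and the $h^p$ approximation factor you need downstream cannot be obtained. (There is also a structural confusion: by the very definition of $\Ph^-$ in \eqref{eq:EP-orth}, $b(\w_h,\bz-\Ph^-\bz)=0$ since $\w_h\in\V_h$, so this is not the term that carries the argument.)

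The paper avoids this obstruction by setting up the dual problem with the \emph{original} form $a$, namely $a(\bv,\ps)=(\bv,\w^0)$, and then invoking the regularity \emph{decomposition} of Theorem~\ref{thm:reg-dec}: the dual solution splits as $\ps=\Psie+\Psia$ with $\|\Psie\|_2\lesssim\|\w^0\|+\ka^{-1}\|\w^0\|_1$ and $\|\Psia\|_{p+1}\lesssim\ka^p\|\w^0\|$, the latter holding even though $\w^0$ is only $\bH^1$. The elliptic projection $\Ph^-\ps$ then enters via $b(\Eh,\ps-\Ph^-\ps)=0$, so that $a(\Eh,\ps-\Ph^-\ps)$ reduces to the single smoothing term $-\cs\ka^2(\Sm\textsf{P}^\perp\Eh,\textsf{P}^\perp(\ps-\Ph^-\ps))$; here the $\bH^{p-1}$ norm lands on $\Sm\textsf{P}^\perp\Eh$ (smooth by Lemma~\ref{lem:Sstab}), while the negative norm from Lemma~\ref{lem:SpherrEst} lands on $\ps-\Ph^-\ps$, whose interpolation error is controlled via the decomposition $\Psie+\Psia$. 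In short, the regularity decomposition is not optional machinery but the mechanism that supplies the missing $\bH^{p+1}$ smoothness; your $b$-dual shortcut bypasses exactly the result (Theorem~\ref{thm:reg-dec}) that makes the argument work for general $p$.
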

		
		\begin{proof} The proof is divided into the following steps.
			
			{\it Step 1. Estimating $\|\curl\E_h\|$ and $\|\Eht\|_{\Ga}$ by $\|\E_h\|$.}
			By \cite[Theorem~4.1]{lu2024preasymptotic}, the following bounds for $\|\curl\E_h\|$ and $\|\Eht\|_{\Ga}$ hold: 
			\begin{align}
				\|\Eht\|_{\Ga} &\ls \|\E_h\|+\ka^{-1}\big(\|\f\|+\|\g\|_{\Ga}\big), \label{eq:stabbd}\\
				\|\curl\Eh\|  &\ls \ka \|\E_h\|+ \ka^{-\frac12}\big(\|\f\|+\|\g\|_{\Ga}\big). \label{eq:stabEner}
			\end{align}
			
			{\it Step 2. Decomposing $\|\E_h\|$.} According to  Lemma~\ref{lem:Vh0},
			there exists $\Eh^0\in \bH_{0}(\curl; \Om)\cap\V_h$ such that 
			\begin{equation}
				\|\Eh-\Eh^0\| +h\|\curl(\Eh-\Eh^0)\| \ls
				h^{\frac{1}{2}}\|\Eht\| _{ \Ga }.
				\label{eq:ehcerr}
			\end{equation} 
			By Lemma~\ref{lem:HD},    we have the following discrete Helmholtz decomposition
			for $\Eh^0$:
			\begin{equation}
				\Eh^0=\w_{h}^0+\nabla r_h^0,
				\label{eq:EhcHD}
			\end{equation}
			where $r_h^0\in U_h^0$ and $\w_{h}^0\in \V_{h}^0$ is discrete divergence free. 
			And there exists $\w^0\in \bH_{0}(\curl; \Om)\cap\bH(\dive^0; \Om)$ such that   $\curl\w^0=\curl\Eh^0$,
			and
			\begin{equation}
				\|\w_{h}^0-\w^0\| \ls h\|\curl\Eh^0\|\ls  h^{\frac{1}{2}}\|\Eht\|_{\Ga} +h\|\curl\Eh\|,
				\label{eq:stabwerr}
			\end{equation}
			where we have used \eqref{eq:ehcerr} to derive the last inequality. By Lemma~\ref{lem:embdding} and the inverse inequalities  $h^{-\frac12}\|\Eht\|_{\Ga},\|\curl\Eh\|\ls h^{-1}\|\Eh\|$, we have 
			\eq{\label{eq:w0}
				\|\w^0\|_1& \ls  h^{-1}\|\Eh\|.}
			In a manner analogous to the derivation in \cite[Step(2) of Theorem~4.1]{lu2024preasymptotic},  we have the following estimate of $\|\E_h\|$:
			\begin{equation}
				\|\E_h\| \ls \|\Eh-\Eh^0\|+\| \w_h^0-\w^0\|+ \frac{1}{\ka^2} \|\f\| 
				+h^{\frac{1}{2}}\|\Eht\|_{\Ga} +\|\w^0\|.
				\label{eq:stabEhbound}
			\end{equation}
			The first two terms on the right-hand side can be bounded using \eqref{eq:ehcerr} and \eqref{eq:stabwerr}. Next we estimate the last term in \eqref{eq:stabEhbound}.
			
			{\it Step 3. Estimating $\|\w^0\|$ using the modified duality argument.} 
			Consider  the dual problem:   
			\begin{align}
				\curl\curl \ps -\ka^2\ps&=\w^0 \quad \text{ in } \Om,   \label{eq:stabDP1-1}\\
				\curl \ps\times \n +\ii \ka\la\ps_T &=\bm{0}  \quad \text{ on } \Ga  \label{eq:stabDP1-2}.
			\end{align}
			It is easy to verify that $\ps $ satisfies the following variational formulation:
			\begin{equation}
				a ( \bv,\ps)=( \bv,\w^0) \quad \forall \bv \in \V,
				\label{eq:stabdualVP1}
			\end{equation}
			and by  Theorems~\ref{thm:stability} and \ref{thm:reg-dec} and \eqref{eq:w0}, there exists the decomposition $\ps=\Psie+\Psia$, and  the following estimates hold:
			\begingroup
			\allowdisplaybreaks	
			\begin{align}
				\|\curl  \ps \|_{1}&+\ka\|\curl  \ps \| +
				\ka\|\ps \|_{1}+\ka^{2}\|\ps \|  +\ka^{2}\|\ps_T \|_{\Ga}   \lesssim \ka \| \w^0\|,  \label{eq:stabDP1} \\
				\|\ps\|_{2} &\ls \ka \|\w^0\| +\ka^{-1}  \|\w^0\|_1\ls    \ka \|\w^0\| +\ka^{-1}h^{-1}\|\Eh\|, \label{eq:stabDP2} \\
				{\|\Psie\|_{2}} &\ls \|\w^0\| +\ka^{-1}  \|\w^0\|_1\ls    \|\w^0\| +\ka^{-1}h^{-1}\|\Eh\|, \label{eq:stabDPE2} \\
				{\|\Psia\|_{p+1}} &\ls \ka^p\|\w^0\|.  \label{eq:stabDPA}			
			\end{align} 
			{Since $\ka h\leq \tilde C_0$ can be guaranteed if  $\ka^{2p+1}h^{2p}$ is sufficiently small, from  Lemmas \ref{lem:interpEst} and \ref{lem:Pherror}--\ref{lem:SpherrEst}, and \eqref{eq:stabDPE2}--\eqref{eq:stabDPA}, we have					\begin{align}
					&\|\ps-\pps \|  + h^{\frac{1}{2}}\|(\ps-\pps )_T\|_{\Ga}+h^{1-p}\| \textsf{P}^{\perp}(\ps - \pps)\|_{-p+1,\ka}\notag\\
					&\ls\|\ps-\Pih\ps \|  + h^{\frac{1}{2}}\|(\ps-\Pih\ps )_T\|_{\Ga}+h\ener{ \ps - \Pih\ps}  \notag\\
					&\ls  h^{2} \| \Psie\|_{2}+ h^{p+1} \| \Psia\|_{p+1} \notag \\
					& \ls  h\big((h+ \ka^{p}h^{p}) \|\w^0\|  + \ka^{-1} \|\Eh\|\big).  \label{eq:infDP}  
				\end{align}  
				Using  \eqref{eq:stabdualVP1}--\eqref{eq:stabDP2}, \eqref{eq:infDP},  and Lemma~\ref{lem:Sstab}, } 
			we deduce that
			\begin{align}
				& \quad  \abs{	( \Eh,\w^0) }  =\abs{ a ( \Eh, \ps) }  \notag \\
				&=\abs{ a ( \Eh, \pps) +a ( \Eh, \ps-\pps)} \notag \\
				&=\abs{ (\f, \pps)+ \bra \g, (\pps)_T \ket +  a ( \Eh, \ps-\pps) }  \notag \\
				&=\abs{(\f, \ps)+ \bra \g, \ps_T \ket +  (\f, \pps - \ps )+ 
					\bra \g, (\pps - \ps )_T \ket - \cs\ka^2 \big(\Sm \textsf{P}^{\perp}\Eh,\textsf{P}^{\perp}(\ps-\pps)\big)}  \notag \\ 
				&\ls   {\|\f\|\|\ps\|+\|\g\|_{\Ga}\|\ps_T\|_{\Ga}+\|\f\|\|\ps-\pps\|  
					+ \|\g\|_{\Ga}\|(\ps-\pps)_T\|_{\Ga} }  \notag\\
				&\quad+ \ka^2 \|\Sm \textsf{P}^{\perp}\E_h\|_{p-1,\ka} \| \textsf{P}^{\perp}(\pps - \ps)\|_{-p+1,\ka}          \notag \\ 
				&\ls \ka^{-1}  \big(\|\f\|+\|\g\|_{\Ga}\big)\|\w^0\|        \notag \\ 
				&\quad 
				+   {\big((h+ \ka^{p}h^{p}) \|\w^0\|  + \ka^{-1} \|\Eh\| \big)\big(h \|\f\|+h^\frac12\|\g\|_{\Ga}+\ka^{p+1}  h ^{p}   \|\E_h\|\big) }       \notag \\ 
				&\ls  \ka^{-1}\big(\|\f\|+\|\g\|_{\Ga}\big)\|\w^0\|+   (\ka^{p+1}h^{p+1}+\ka^{2p+1}h^{2p} )\|\w^0\|\|\Eh\|  + \ka^{p}h^p  \|\Eh\|^2    \notag \\ 
				&\quad +   \big( \ka^{-1}h  \|\f\|+ \ka^{-1}h^\frac{1}{2}  \|\g\|_{\Ga}\big)\|\E_h\|.
				\label{eq:stabehw}
			\end{align}
			\endgroup
			Note that $(\w^0,\nabla r_h^0)=0$, we have
			\eqn{\|\w^0\|^2 & =(\w^0+\Eh-\Eh +\w_{h}^0 +\nabla r_h^0 -\w_{h}^0, \w^0) \notag \\
				& = (\w^0-\w_{h}^0, \w^0)  +(\Eh^0-\Eh, \w^0) +(\Eh, \w^0),      }
			which together with \eqref{eq:stabehw} implies that
			\eqn{
				\|\w^0\|^2&\ls\|\w^0-\w_{h}^0\|^2+ \|\Eh^0-\Eh\|^2+\ka^{-1}\big(\|\f\|+\|\g\|_{\Ga}\big)\|\w^0\|+  (\ka^{p+1}h^{p+1} +\ka^{2p+1}h^{2p})\|\E_h\| \|\w^0\|             \notag \\ 
				&\quad +    \big( \ka^{-1}h  \|\f\|+ \ka^{-1}h^\frac{1}{2}  \|\g\|_{\Ga}\big)\|\E_h\| + \ka^p h^p \|\E_h\|^2.  }
			Therefore, by the Young's inequality we have
			\eq{\label{stabw0wEstimate}
				\|\w^0\|&\ls\|\w^0-\w_{h}^0\|+ \|\Eh^0-\Eh\|+\ka^{-1}\big(\|\f\|+\|\g\|_{\Ga}\big)+  C(\ka,h) \|\E_h\|,}
			where $C(\ka,h) =\big(\ka^{2p+1}h^{2p}+  (\ka h)^{p+1} + (\ka  h )^{\frac{p}{2}} +h + h^\frac{1}{2}  \big)$.
			
			{\it Step 4.  Summing up.} By plugging \eqref{stabw0wEstimate} into \eqref{eq:stabEhbound} and using \eqref{eq:ehcerr} and \eqref{eq:stabwerr}, we obtain
			\eqn{
				\|\E_h\| &\ls h^{\frac{1}{2}}\|\Eht\|_{\Ga}+h\|\curl\Eh\|+ \ka^{-1}\big(\|\f\|+\|\g\|_{\Ga}\big) +C(\ka,h)  \|\E_h\|, }
			which, together with \eqref{eq:stabbd}--\eqref{eq:stabEner} gives
			\begin{align*}
				\|\E_h\| & \ls \big(\ka^{-1}+\ka^{-1}h^\frac12+\ka^{-\frac12}h\big)\big(\|\f\|+\|\g\|_{\Ga}\big)+ \big(   C(\ka,h)  + \ka h \big) \|\E_h\|.
			\end{align*} 
			When $\ka h$ is small, $\big(   C(\ka,h)  + \ka h \big) \ls  \ka^{2p+1}h^{2p}+(\ka h)^\frac12$.  Therefore, there exists a constant $C_0>0$  such that if  $\ka^{2p+1}h^{2p}\le C_0$,  then
			\begin{align}
				\|\E_h\| \ls \ka^{-1}\big(\|\f\|+\|\g\|_{\Ga}\big).
				\label{eq:stabl2}
			\end{align} 
			Then \eqref{eq:EEMStability} follows from \eqref{eq:stabl2}, \eqref{eq:stabbd} and\eqref{eq:stabEner}. This completes the proof of the theorem.	
		\end{proof}
		
		\begin{remark}\label{rem:EEMwellposed}
			{\rm (a)} This stability estimate of the  EEM  solution is  of the same  order as
			those of the continuous solution (\cf \eqref{eq:stability0} in Lemma~\ref{thm:stability})  and still holds when $\dive\f\neq 0$.
			
			{\rm (b)} When $\ka^{2p+1}h^{2p}$ is large, the well-posedness of EEM is still open.
			
			{\rm (c)} The modified duality argument in Step 3 differs from the standard duality argument (or Aubin-Nitsche trick)  in that it replaces the interpolation of the solution to the dual problem with its Maxwell  elliptic projection.    The modified duality argument was first introduced to derive preasymptotic error estimates \cite{ZhuWu2013II,DuWu2015} and preasymptotic stability estimates \cite{Wu2018jssx} for FEM and  the  continuous interior penalty (CIP)-FEM in the context of the Helmholtz equation with large wave numbers.
		\end{remark}

		\subsection{Preasymptotic error estimates of EEM}

		The following Theorem gives the preasymptotic error bounds for the high-order EEM applied to \eqref{Maxwell} in both the energy norm and the $\ka$-scaled $\bL^2$ norm, which is the main result of this paper.
		
		\begin{theorem}\label{thm:ErrorResult}
			Under the conditions of Theorem~\ref{thm:EEMStability}, let $\E$ denote the solution to problem \eqref{Maxwell}. Then, there exist a constant $C_0>0$ independent of $\ka$ and $h$, such that if     $\ka^{2p+1} h^{2p} \leq C_0$, the following preasymptotic error estimates between $\E$ and the EEM solution $\E_h$ hold:
			\eq{\ener{\E-\Eh} &\ls   \big( 1 +  \ka^{p+1}h^{p} \big) \inf_{\bv_{h} \in \V_h}\ener{\E -\bv_h},	\label{eq:energyResultInf}  \\
				\ka\|\E-\Eh\|  &\ls \inf_{\bv_{h} \in \V_h} \big((   \ka h +  \ka^{p+1}h^{p} ) \ener{\E -\bv_h} +  \ka\|\E -\bv_h\| +  \ka h^{\frac{1}{2}}\|(\E -\bv_h)_T\|_{\Ga}\big). \label{eq:L2ResultInf}
			}
			Furthermore, if  $ \f \in \bm{H}^{p-1}(\dive^0;\Om)$,  $\f\cdot \n \in\bH^{p-\frac{1}{2}}(\Ga)$  and $\g\in \bH^{p-\frac{1}{2}}_t(\dive_{\Ga};\Ga)$, then
			\eq{ \ener{\E-\Eh} &\ls   \big( \ka^{p}h^p +  \ka^{2p+1}h^{2p} \big) C_{p-1,\f,\g},	\label{eq:energyResult}\\
				\ka \|\E-\Eh\|  &\ls  \big( (\ka h)^{p+1} +  \ka^{2p+1}h^{2p}  \big) C_{p-1,\f,\g} , \label{eq:L2Result}
			}
			where $C_{p-1,\f,\g}$ is defined as in Theorem~\ref{thm:stability} with $m=p+1$.
		\end{theorem}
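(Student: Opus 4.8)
The plan is the standard two‑step scheme for preasymptotic error estimates: an energy estimate from the G\aa rding inequality together with Galerkin orthogonality, coupled with an $\bL^2$ estimate from the ``modified duality argument'', the two new ingredients being the EEM stability of Theorem~\ref{thm:EEMStability} and the regularity decomposition of Theorem~\ref{thm:reg-dec}. Throughout, fix an arbitrary $\bv_h\in\V_h$, set $\be:=\E-\Eh$ and use the splitting $\be=(\E-\bv_h)+(\bv_h-\Eh)$ with $\bv_h-\Eh\in\V_h$; Galerkin orthogonality \eqref{eq:orthogonality} gives $a(\be,\w_h)=0$ for all $\w_h\in\V_h$. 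Since $(\re-\im)\,a(\bv,\bv)=\ener{\bv}^2-2\ka^2\|\bv\|^2$ and $a(\bv_h-\Eh,\bv_h-\Eh)=-a(\E-\bv_h,\bv_h-\Eh)$, the continuity of $a$ on $\V$ and Young's inequality give $\ener{\bv_h-\Eh}\ls\ener{\E-\bv_h}+\ka\|\bv_h-\Eh\|$, whence
\[
\ener{\be}\ls\ener{\E-\bv_h}+\ka\|\be\|.
\]
I would then prove the two ``inf'' bounds \eqref{eq:energyResultInf}--\eqref{eq:L2ResultInf} and recover the explicit rates \eqref{eq:energyResult}--\eqref{eq:L2Result} by taking $\bv_h=\Pih\E$ and invoking Theorem~\ref{thm:stability} (with $m=p+1$, so $\|\E\|_{p+1}\ls\ka^pC_{p-1,\f,\g}$) together with Lemma~\ref{lem:interpEst}.

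\emph{The $\bL^2$ estimate via modified duality.} Let $\ps\in\V$ solve the adjoint problem $a(\bv,\ps)=(\bv,\be)$ for all $\bv\in\V$. Then $\|\be\|^2=a(\be,\ps)=a(\be,\ps-\Ph^{-}\ps)$ by orthogonality; writing $a(\cdot,\cdot)=b(\cdot,\cdot)-\cs\ka^2(\Sm\textsf{P}^{\perp}\cdot,\textsf{P}^{\perp}\cdot)$, using $b(\w_h,\ps-\Ph^{-}\ps)=0$ for $\w_h\in\V_h$, and $\be=(\E-\bv_h)+(\bv_h-\Eh)$, one obtains
\[
\|\be\|^2=b(\E-\bv_h,\ps-\Ph^{-}\ps)-\cs\ka^2\big(\Sm\textsf{P}^{\perp}\be,\textsf{P}^{\perp}(\ps-\Ph^{-}\ps)\big).
\]
For the first term one expands $b$ and applies Lemma~\ref{lem:Pherror}, the splitting $\ps=\Psie+\Psia$ of Theorem~\ref{thm:reg-dec}, and Lemma~\ref{lem:interpEst}, producing the best‑approximation quantities $(\ka h+\ka^{p+1}h^p)\ener{\E-\bv_h}$, $\ka\|\E-\bv_h\|$ and $\ka h^{1/2}\|(\E-\bv_h)_T\|_\Ga$. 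For the second term one uses $\|\Sm\textsf{P}^{\perp}\be\|_{p-1,\ka}\ls\ka^{p-1}\|\be\|$ (Lemma~\ref{lem:Sstab}) and $\|\textsf{P}^{\perp}(\ps-\Ph^{-}\ps)\|_{-p+1,\ka}\ls h^p\inf_{\w_h}\ener{\ps-\w_h}$ (Lemma~\ref{lem:SpherrEst}), together again with Theorem~\ref{thm:reg-dec} and Lemma~\ref{lem:interpEst}, bounding it by $\ls\ka^{p+1}h^p\big(h+(\ka h)^p\big)\|\be\|^2$. Collecting terms and dividing by $\|\be\|$,
\[
\ka\|\be\|\ls(\ka h+\ka^{p+1}h^p)\ener{\E-\bv_h}+\ka\|\E-\bv_h\|+\ka h^{1/2}\|(\E-\bv_h)_T\|_\Ga+C(\ka,h)\,\ener{\be},
\]
with $C(\ka,h)=\ka^{2p+1}h^{2p}+(\ka h)^{p+1}+(\ka h)^{p/2}+h^{1/2}$.

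\emph{Conclusion.} Substituting this into the energy bound and using that $C(\ka,h)$ can be made arbitrarily small once $\ka^{2p+1}h^{2p}\le C_0$ with $C_0$ small, the term $C(\ka,h)\ener{\be}$ is absorbed on the left; since $\ka\|\E-\bv_h\|\le\ener{\E-\bv_h}$ and $\ka h^{1/2}\|(\E-\bv_h)_T\|_\Ga\ls(\ka h)^{1/2}\ener{\E-\bv_h}$ with $\ka h$ small, taking the infimum over $\bv_h$ yields \eqref{eq:energyResultInf}, and feeding it back into the $\bL^2$ bound yields \eqref{eq:L2ResultInf}. Finally, with $\bv_h=\Pih\E$, using $\|\E\|_{p+1}\ls\ka^pC_{p-1,\f,\g}$ and \eqref{eq:curlinterp}--\eqref{eq:interpener} gives \eqref{eq:energyResult}--\eqref{eq:L2Result}.

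\emph{Main obstacle.} The delicate part is the modified duality step. Besides the careful bookkeeping of powers of $\ka$ and $h$ needed so that every term other than the three genuine best‑approximation quantities carries a coefficient that is small under $\ka^{2p+1}h^{2p}\le C_0$, one must contend with the fact that the adjoint source $\be=\E-\Eh$ is in general neither divergence‑free (since $\dive\Eh\neq0$) nor globally in $\bH^1$, so the adjoint solution need not enjoy the $\bH^2$‑regularity invoked above. As in the proof of Theorem~\ref{thm:EEMStability}, I would circumvent this by passing through a Helmholtz decomposition of the error, applying the modified duality argument only to its divergence‑free component and controlling the gradient component by testing \eqref{eq:orthogonality} against gradients $\nabla q_h\in\V_h$ (recall $\nabla U_h\subset\V_h$ for the second‑type space) together with approximation and inverse estimates.
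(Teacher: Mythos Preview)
Your approach is workable in principle but takes a genuinely different and considerably longer route than the paper. The paper does \emph{not} run a duality argument on the error $\be=\E-\Eh$. Instead it pivots at the elliptic projection: set $\bbeta=\E-\PEp$ and $\bxi_h=\Eh-\PEp$. Since $b(\bbeta,\bv_h)=0$ and $a=b-\cs\ka^2(\Sm\textsf{P}^\perp\cdot,\textsf{P}^\perp\cdot)$, one gets $a(\bxi_h,\bv_h)=-\cs\ka^2(\Sm\textsf{P}^\perp\bbeta,\bv_h)$, i.e.\ $\bxi_h$ is the EEM solution with source $-\cs\ka^2\Sm\textsf{P}^\perp\bbeta$ and zero boundary data. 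The discrete stability Theorem~\ref{thm:EEMStability} then gives $\ener{\bxi_h}+\ka\|\bxi_h\|\ls\ka^2\|\Sm\textsf{P}^\perp\bbeta\|$, and a short computation using self-adjointness of $\Sm$, Lemma~\ref{lem:Sstab}, and Lemma~\ref{lem:SpherrEst} yields $\|\Sm\textsf{P}^\perp\bbeta\|\ls\ka^{p-1}h^p\inf_{\bv_h}\ener{\E-\bv_h}$. The triangle inequality together with Lemma~\ref{lem:Pherror} for $\bbeta$ finishes both \eqref{eq:energyResultInf} and \eqref{eq:L2ResultInf} in a few lines.

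The trade-off: the paper has already packaged the modified duality argument inside Theorem~\ref{thm:EEMStability} and the elliptic projection estimates of Lemmas~\ref{lem:Pherror}--\ref{lem:SpherrEst}; the error theorem then merely \emph{combines} these. Your plan effectively redoes that duality argument with source $\be$, which is precisely where your ``main obstacle'' bites: $\be$ is neither divergence-free nor does $\be\cdot\n\in\bH^{1/2}(\Ga)$, so Theorem~\ref{thm:reg-dec} cannot be applied to the adjoint solution directly, and you are forced into the Helmholtz-decomposition detour you describe. That detour can be carried out, but it duplicates much of the proof of Theorem~\ref{thm:EEMStability}. Pivoting at $\PEp$ avoids the obstacle entirely because the resulting discrete source $\Sm\textsf{P}^\perp\bbeta$ is automatically smooth, divergence-free, and has vanishing normal trace.
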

		\begin{proof}
			Denote by $\bbeta =\E-\PEp$ and ${\bxi}_h=\Eh - \PEp$. Then we have $ \E-\Eh =\bbeta-{\bxi}_h$ and 
			\begin{align*} 
				a ({\bxi}_h,\bv_{h}) &= a (\bbeta, \bv_{h})  =  - \cs\ka^2 (\Sm \textsf{P}^{\perp} \bbeta,  \bv_{h})\quad {\forall \bv_h\in\V_h.}
			\end{align*} 
			Without loss of generality, suppose $\|\Sm \textsf{P}^{\perp} \bbeta\|\neq 0$.			From Lemmas~\ref{lem:Sstab} and \ref{lem:SpherrEst}, we have
			\begin{equation*}
				\|\Sm \textsf{P}^{\perp} \bbeta\| =\frac{( \textsf{P}^{\perp} \bbeta,\Sm\Sm\textsf{P}^{\perp} \bbeta)}{\|\Sm\textsf{P}^{\perp} \bbeta\|}\ls \frac{\|\textsf{P}^{\perp} \bbeta\|_{-p+1,\ka}\|\Sm\Sm\textsf{P}^{\perp} \bbeta\|_{p-1,\ka}}{\|\Sm\textsf{P}^{\perp} \bbeta\|}   \ls \ka^{p-1}h^{p} \inf_{\bv_{h} \in \V_h}\ener{\E -\bv_h}.
			\end{equation*}
			By using Theorem~ \ref{thm:EEMStability}  {(with $\bm f=-\cs\ka^2\Sm \textsf{P}^{\perp} \bbeta $ and $\g= \bm{0}$)} and Lemma~\ref{lem:SpherrEst}, we obtain
			\begin{align*} 
				\ener{\bxi_h}+ \ka\|{\bxi}_h\|  & \ls \ka^2 \| \Sm \textsf{P}^{\perp} \bbeta\|    \ls   \ka^{p+1}h^{p} \inf_{\bv_{h} \in \V_h}\ener{\E -\bv_h}.   
			\end{align*} 	 
			Combining Lemma~\ref{lem:Pherror} and the above estimates  and using the triangle  inequality, we obtain \eqref{eq:energyResultInf}--\eqref{eq:L2ResultInf}.
			The estimates \eqref{eq:energyResult}--\eqref{eq:L2Result} can be obtained by Lemma~\ref{lem:interpEst}, Theorem~\ref{thm:stability} and the fact that $\ka h \ls 1$. This completes the proof of this theorem.
		\end{proof}	
		
		We make some remarks on this theorem.
		\begin{remark}\label{rem:PreErr}
			{\rm (a)}  The significances of Theorem~\ref{thm:ErrorResult} include:
			\begin{itemize}
				\item It generalizes the preasymptotic error estimates of the linear EEM in \cite{lu2024preasymptotic} for problem \eqref{Maxwell} to the higher-order cases.
				\item It  generalizes the preasymptotic error estimates  of the $h$-FEM of arbitrary order $p$ applied to the Helmholtz problem \cite{DuWu2015} to the EEM for the time-harmonic Maxwell equations. 
				\item  Using the second-type EEM, it generalizes the preasymptotic  error estimates of problem 
				\eqref{eq:eq} with PEC  boundary conditions in \cite{chaumont2024sharp}, to the impedance boundary conditions. 
				\item It reveals the advantages of higher-order EEM  applied to the high frequency Maxwell problem in the preasymptotic regime. 
				In contrast, the theories presented in  \cite{melenk2020wavenumber,melenk2023wavenumber} primarily focus on the quasi-optimality of $hp$-EEM, i.e., in the asymptotic regime.
			\end{itemize}
			
			{\rm (b)} When $\dive\f \neq 0$,  the estimates in \eqref{eq:energyResult}--\eqref{eq:L2Result} remain valid  by adding additional term to  $C_{p-1,\f,\g}$ for domains $\Om$ with $C^{p+2}$ regularity (see Remarks~\ref{rem:reg} and \ref{rem:Sfstab}).
			
			{\rm (c)} The tailor-made truncation operator $\Sm$ is crucial for weakening the mesh condition (see Lemma~\ref{lem:SpherrEst}). If the   elliptic projections were replaced by those given in  \cite[(3.8)-(3.9)]{lu2024preasymptotic}  instead of \eqref{eq:defbuv} and \eqref{eq:EP-orth}, the best  result we could currently achieve  would be a preasymptotic error estimate, under the condition that $\ka^{p+2}h^{p+1}$ is sufficiently small.
			
			{\rm (d)} For problems with large wave number, a discrete solution of reasonable accuracy requires the pollution error $\ka^{2p+1}h^{2p}$ to be small enough (see, e.g. \cite{ainsworth2003dispersive,ainsworth2004dispersive,ihlenburg1997finite}). From this point of
			view, our mesh condition $\ka^{2p+1}h^{2p}\leq C_0$ is quite practical.	
		\end{remark}

		\section{Numerical example}\label{sc5}
		
		In this section, we simulate the following three-dimensional time-harmonic Maxwell  problem to illustrate our theoretical results:
		\begin{equation*}
			\begin{cases}
				\curl\curl\E-\ka^2\E  = \f, \quad &{\rm in}\quad \Om:={(0,1)}^3,\\
				\curl \E \times  \n -\ii\ka \E_T=\g,   \quad &{\rm on}\quad \Ga := \pa  \Omega,  \\
			\end{cases}
		\end{equation*}
		and $\f$ and $\g$ are so chosen  that the
		exact solution is  
		$\E= \big( \sin(\ka y)J_0(\ka r), \cos(\ka z )J_0(\ka r), \ii \ka J_0(\ka r) \big)^T$, where $r=(x^2 +y^2+z^2)^{\frac{1}{2}}$ and $J_0$ is the Bessel function of the first kind. Given a positive integer $M>0$,
		the computational domain is discretized first by dividing it into small cubes with side-length $h_0:=1/M$, then each small cube is further divided in to six tetrahedrons (see Figure~\ref{fig:mesh}). A simple calculation yields that the number of degrees of freedom of the linear system \eqref{eq:EEM} is
		$
		\text{DOF} := M(p+1)(3M^2p^2+3M^2p+M^2+6Mp+3M+3).
		$ 
		{As in} \cite[Section~10]{melenk2023wavenumber}, we denote by $N_{\la}$ the number of degrees of freedom per wavelength, which can be calculated by
		$$
		N_{\la} : = \frac{2 \pi \text{ DOF}^{\frac{1}{3}}}{\ka}.
		$$
		We plot the relative errors in the norm $\energy{\cdot}:=(\|\curl \cdot\|^2 + \ka^2\|\cdot\|^2)^{\frac{1}{2}}$ of EE solutions and EE interpolants for $p=1,2,3$ and $M=1,2,\cdots$  with fixed
		$N_{\la} =10$ in one figure (see Figure~\ref{fig:kheper10dofJ0}). One can see that the relative errors  of the EE solutions fit those of the corresponding EE interpolations when $\ka<10$, implying the absence of pollution errors for small wave numbers.  The relative error  of the linear EEM ($p=1$) grows  approximately  linearly with the increase of $\ka$ when $\ka>20$ and exceeds $100\%$ when $\ka >100$. For the quadratic EEM
		($p=2$), the pollution error also grows linearly with respect to $\ka$, but at a slower rate compared to the linear case.
		By closely examining the relative error curve of the cubic EEM ($p = 3$), we can observe that the pollution error also exists although it increases very slowly.
		
		\begin{figure*}[htbp]
			\centering
			\subfigure[The structured mesh ($M=6$).]{
				\begin{minipage}[t]{0.49\linewidth}  
					\centering
					\includegraphics[scale=0.4]{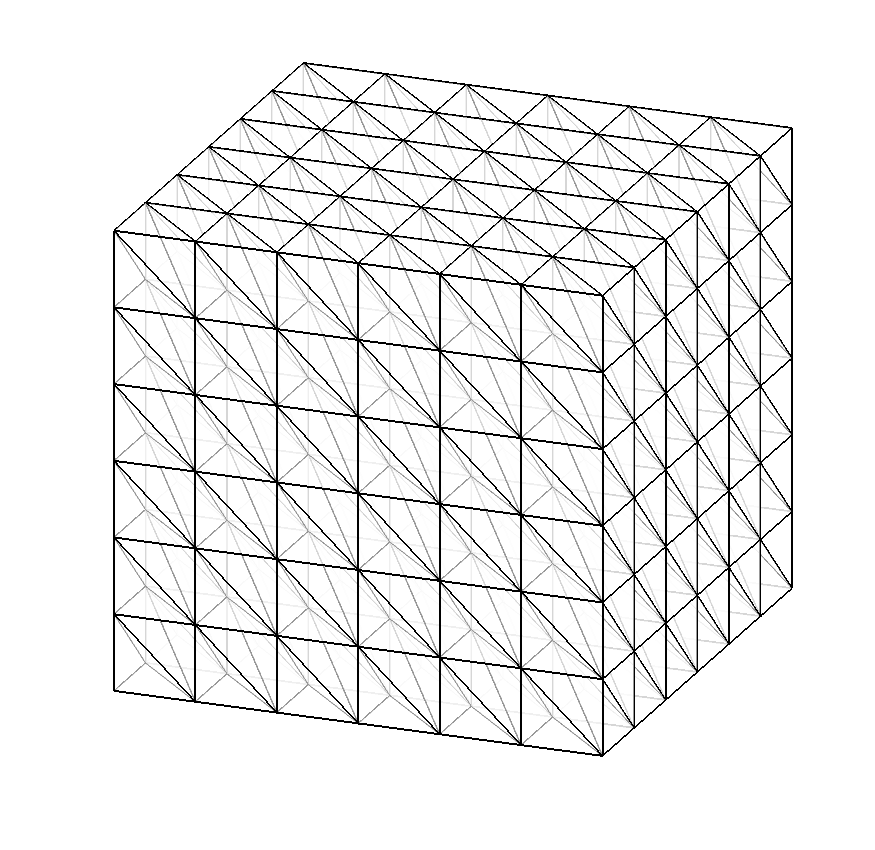}
					\label{fig:mesh}
				\end{minipage}%
			}%
			\subfigure[The relative  errors  in $\energy{\cdot}$ of the EE solutions  and the corresponding EE interpolations.]{
				\begin{minipage}[t]{0.49\linewidth}  
					\centering
					\includegraphics[scale=0.5]{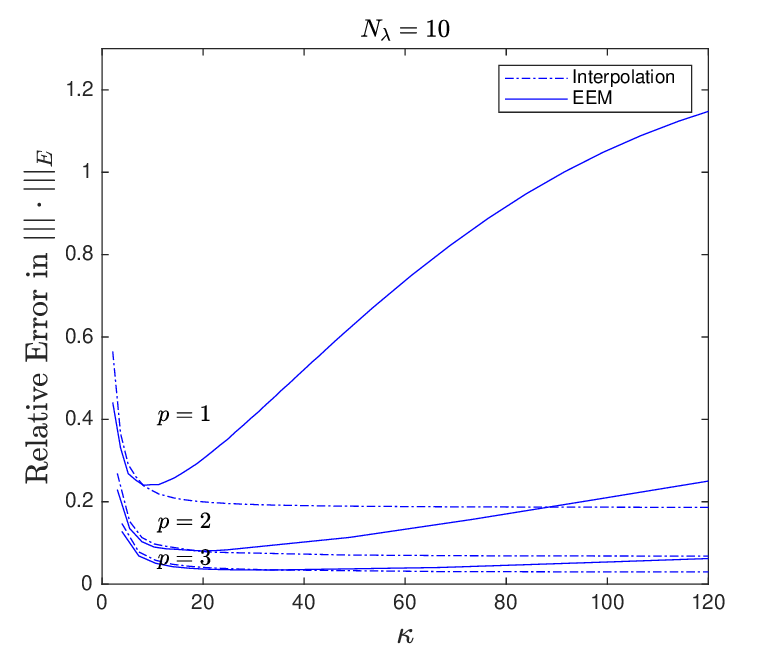}
					\label{fig:kheper10dofJ0}
				\end{minipage}%
			}%
			
			\centering  
			\caption{A sample mesh and relative error curve.}
			\label{fig:interp2}
		\end{figure*}
		Figures~\ref{fig:fixKEner} and \ref{fig:fixKL2} illustrate the relative  errors in $\energy{\cdot}$ and  $\bL^2$-norm  of the EE solutions and EE interpolations, for $\ka = 5$ and $50$, respectively.
		The figures clearly show that  when $\ka = 5$, the errors of the solutions close to those of the corresponding EE interpolations and decay with the optimal order, implying  that  pollution errors are negligible for small wave numbers.
		Conversely, for large $\ka$, the relative errors of the  EE  solutions decay slowly when $N_{\la}$ is small and quasi-optimal convergence is reached when $N_{\la}$ is sufficiently large. These behaviors vividly reveal the existence of pollution error in the fixed-order EEM. And it is also worth noting 
		that the higher-order EEM effectively alleviates the pollution effect, which is expected in view of Theorem~\ref{thm:ErrorResult}. 
		
		\begin{figure}[h]
			\centering
			\includegraphics[width=0.99\textwidth,scale=1,trim={2cm 0 2cm 0}]{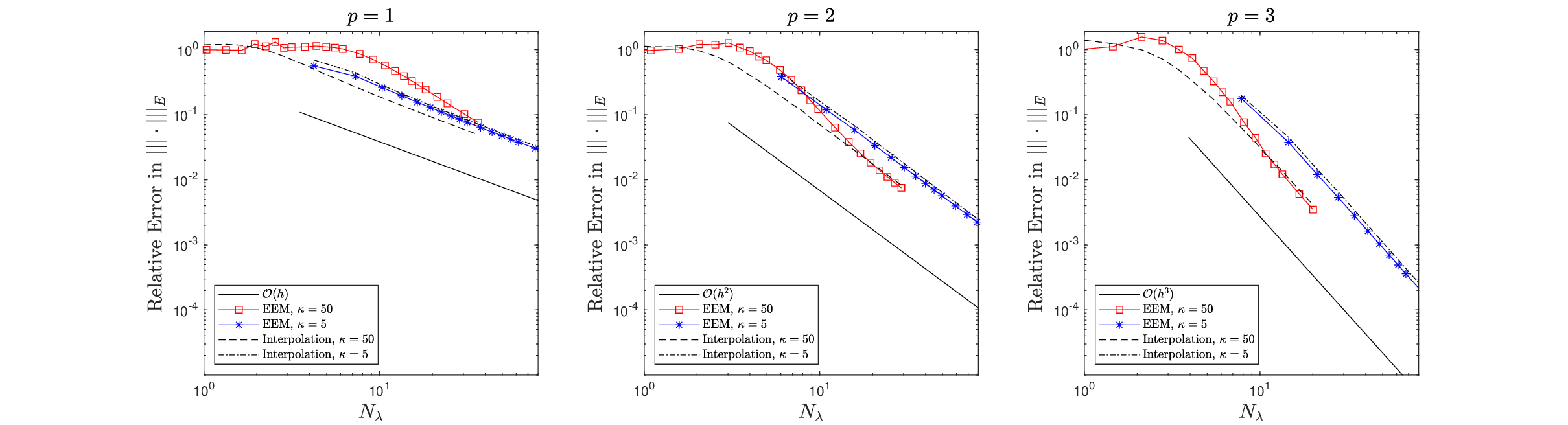}\\
			\caption{Log-log plots of  the relative errors in $\energy{\cdot}$ of  the   EE solution  versus  $N_{\la}$ with $p= 1,2,3 $, 
				and for $\ka  = 5$  and $50 $, respectively. 
				\label{fig:fixKEner}
			}
		\end{figure}
		\begin{figure}[h]
			\centering
			\includegraphics[width=0.99\textwidth,scale=1,trim={2cm 0 2cm 0}]{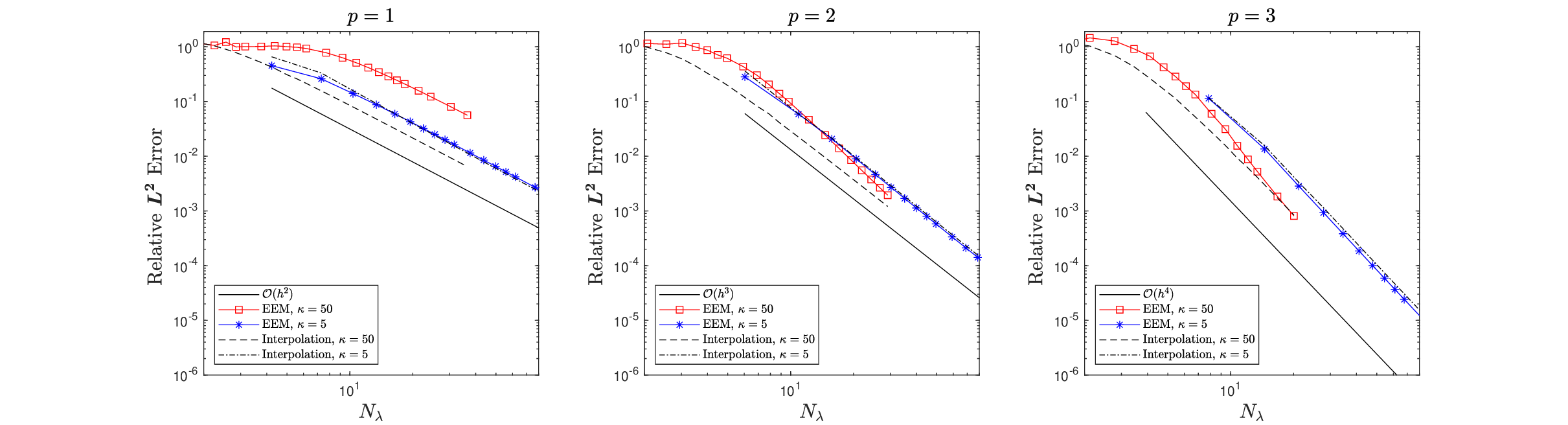}\\
			\caption{Log-log plots of the relative $\bL^2$ errors of  the  EE solution  versus $N_{\la}$ with $p= 1,2,3 $, 
				and	for $\ka = 5$  and $50 $, respectively. 
				\label{fig:fixKL2}
			}
		\end{figure}

		
		\appendix
		
		\renewcommand{\theequation}{A.\arabic{equation}}
		\renewcommand{\thetheorem}{A.\arabic{theorem}}
		\renewcommand{\thelemma}{A.\arabic{lemma}}
		\renewcommand{\theremark}{A.\arabic{remark}}
		\renewcommand{\thefigure}{A.\arabic{figure}}
		\setcounter{equation}{0}
		\setcounter{theorem}{0}
		\setcounter{lemma}{0}
		\setcounter{remark}{0}
		\setcounter{figure}{0}
		
		\section{proof of Lemma~\ref{lem:goodRegular}}\label{sc:appendixA}
		
		{\it Proof of \eqref{bSfgeqStabEnerg}.}	
		By \eqref{eq:bVP}  and \eqref{eq:inf-sup}, there exists $ {\bm 0}\neq \bv\in \V$ such that
		\eqn{ \ener{\w} &\ls \frac{\abs{b(\bw,\bv)}}{\ener{\bv}} = \frac{\abs{(\fp, \bv) +\inn{\gp, \bv_T} }}{\ener{\bv}}\ls  \frac{ \|\fp\|\|\bv\|+ \|\gp\|_{\Ga}\|\bv_T \|_{\Ga}  }{\ener{\bv} }   \\
			&\ls \ka^{-1}\|\fp\| +\ka^{-\frac{1}{2}} \|\gp\|_{\Ga}, }
		which immediately deduces \eqref{bSfgeqStabEnerg}.
		
		{\it Proof of \eqref{bSfgE1}.}	
		To prove 	\eqref{bSfgE1}--\eqref{bSfgcurlE1}, we rewrite \eqref{eq:auxProb} as:
		\begin{equation}\label{eq:transferDefProb}
			\left\{\begin{array}{rlrl}
				\curl\curl \w-\ka^{2}\w&=\fp -\cs\ka^2\Sm \textsf{P}^{\perp} \w =: \fpp & \text { in } \Om, \\
				\curl \w \times  \n -\ii\ka\la\w_T&=\gp  & \text { on } \Ga,
			\end{array}\right. 
		\end{equation}
		then we follow  {the proof of} \cite[Theorem~3.2]{lu2018regularity} to derive \eqref{bSfgE1} and \eqref{bSfgcurlE1}.
		
		By Lemma~\ref{lem:proju}, there exist $\Bup^0 \in \bH_0(\dive^0;\Om)$  and  $\psi\in H^1$ such that $$\w= \Bup^0 + \nabla \psi.$$  
		{It is esay to verify that ${\curl} \Bup^0 =   \curl \w$, $\left\| \Bup^0 \right\|  \leq \|\w\|$,  $\|\nabla \psi \|  \leq \|\w\|$, and $\psi$ can be chosen according to the following elliptic equation in weak sense:
			\begin{equation*}  
				\Delta~ \psi = 0 \ \  \text{in} \ \Omega,\quad  \frac{\partial \psi}{\partial \n } =  \w\cdot \n\ \  \text{on}\ \ \Ga, \quad   \int_{\Omega} \psi\,  {\rm d}x = 0.
			\end{equation*} 
			By} virtue of  Lemma~\ref{lem:embdding},\begin{equation} \label{Aiota}
			\left\|\Bup^0 \right\|_{1} \ls \| \curl \w\|.
		\end{equation}
		On the other hand, using  Poincar\'e's inequality, we  get  that
		\begin{equation}\label{ApsiH1}
			\| \psi \|_{1} \ls  \|\nabla \psi \|  \ls  \|\w\|.
		\end{equation}
		Next, we consider the estimate of $\|\psi\|_2$. The boundary condition    can be rewritten as
		\begin{equation*} 
			\ii  \ka  \la \nabla_{\Ga} \psi = (   \curl \w ) \times \n -\ii \ka  \la \Bup^0_{T} - \gp,
		\end{equation*}
		where $\nabla_{\Ga}\psi$ is the tangential gradient of $\psi$, i.e., $\nabla_{\Ga} \psi = (\n \times \nabla \psi) \times \n {=(\nabla\psi)_T}$ {(see e.g.\cite{monk2003})}.
		Recall the formula $\dive_\Ga(\bm v\times\n)=\n\cdot\curl\bm v$ {(see e.g. \cite{monk2003})}, we have 
		\begin{equation*}  
			\dive_{\Ga} \big( (  \curl \w ) \times {\n }  \big)
			=  \n \cdot \big(  \curl (   \curl \w)    \big) = \n \cdot ( \fpp  +  \ka ^2   \w).
		\end{equation*}
		Hence it follows  that
		\begin{equation}\label{3-8}
			\begin{array}{ll}
				\left\| \dive _{\Ga}\big( ( \curl \w ) \times \n \big)   \right\|_{-\frac12, \Ga} 
				&\ls   \left\|\fpp  +  \ka ^2   \w \right\|  + \left\| \dive ( \fpp  +
				\ka ^2   \w) \right\|    \\
				&\ls  \|\fpp \|  +   \ka ^2 \|\w\|.
			\end{array}
		\end{equation}
		Meanwhile, by virtue of \eqref{Aiota},
		\begin{equation}\label{3-9}
			\left \|\ii \ka  \la \Bup^0_T \right\|_{\frac12, \Ga}
			\ls   \ka  \| \Bup^0\|_{1} \ls   \ka  \| \curl \w\| .
		\end{equation}
		Combining the estimates  \eqref{3-8}-\eqref{3-9}, we have
		\begin{equation}\label{3-9-add}
			\begin{array}{ll}
				\ka   \| \dive_{\Ga} \nabla_{\Ga} \psi   \|_{-\frac12, \Ga} 
				&\leq  \left \|  \dive_{\Ga} \big((    \curl \w) \times \n \big)\right\|_{-\frac12, \Ga} + \|\ii \ka  \la \Bup^0_T\|_{\frac12,\Ga} +  \|\dive_{\Ga} \gp\|_{-\frac{1}{2},\Ga}     \\ 
				&\ls   \| \fpp  \| 
				+    \ka^2 \|\w\|  +    \ka \| \curl \w\|  +  \|\dive_{\Ga} \gp\|_{-\frac{1}{2},\Ga}.
			\end{array}
		\end{equation}
		Hence, applying the elliptic regularity theorem for the Laplace-Beltrami operator on smooth surfaces (see e.g. \cite[Remark~1]{chen2024regularity}  {or \cite[\S1.3.b]{costabel2010corner}}), we have
		\begin{align*} 
			\ka  \| \psi \|_{\frac32, \Ga}
			&\ls   \ka   \left\| \dive_{\Ga} \nabla_{\Ga}\psi \right\|_{-\frac12, \Ga} +  
			\ka  \|\psi\|_{\Ga}    \\ 
			&\ls   \ka  \left\| \dive_{\Ga} \nabla_{\Ga} \psi  \right\|_{-\frac12, \Ga} +    \ka  \|\psi \|_{1} \\
			&\ls   \|\fpp  \| +   \ka ^2 \|\w\|   +  \ka \|  \curl \w\| +  \|\dive_{\Ga} \gp\|_{-\frac{1}{2},\Ga},
		\end{align*}
		where the last inequality is due to \eqref{ApsiH1} and \eqref{3-9-add}.
		Then according to the regularity theory for the Dirichlet problem of Laplace equation,
		\begin{equation*} 
			\ka  \|\psi\|_{2} \ls   \|\fpp  \| +   \ka ^2 \|\w\|  +  \ka \|  \curl \w\|    +   \|\dive_{\Ga} \gp\|_{-\frac{1}{2},\Ga},
		\end{equation*}
		which combined with \eqref{Aiota} yields
		\begin{equation}\label{3-12}
			\ka  \|\w\|_{1} \ls \|\fpp  \| +   \ka ^2 \|\w\| +  \ka \|  \curl \w\|  +  \|\dive_{\Ga} \gp\|_{-\frac{1}{2},\Ga}.
		\end{equation}
		By the definition of $\fpp$ and the stability of $\Sm$ and $\textsf{P}^{\perp}$, 
		we have
		\eqn{
			\|\w\|_{1} 
			& \ls \ka\|\w\| +\|\curl \w \| + \ka^{-1}\|  \fp -\cs\ka^2\Sm \textsf{P}^{\perp} \w \|  + \ka^{-1} \|\dive_{\Ga} \gp\|_{-\frac{1}{2},\Ga}  \\
			& \ls \ka^{-1}\|\fp\|  + \ka^{-\frac{1}{2}} \|\gp\|_{\Ga} + \ka^{-1}\|\dive \gp\|_{-\frac{1}{2},\Ga},
		}
		which completes the proof of \eqref{bSfgE1}. 
		
		{\it Proof of \eqref{bSfgcurlE1}.}	
		Note that $\curl\w$ satisfies 
		\begin{equation*} 
			\left\{ \begin{array}{l}
				\dive (   \curl \w) = 0,\ \ \  {\rm in}\ \Omega,\\[2mm]
				\curl (   \curl \w) = \fp +  \ka ^2  \w  -\cs\ka^2\Sm \textsf{P}^{\perp} \w ,\ \ \ {\rm in} \ \Omega, \\[2mm]
				\curl \w \times \n =  \ii  \ka  \la \w_T + \gp,\ \  \ {\rm on} \ \Ga.
			\end{array}
			\right.
		\end{equation*}
		Since $(\fp +  \ka ^2  \w  -\cs\ka^2\Sm \textsf{P}^{\perp} \w)\in \bL^2(\Om)$ and $(\ii  \ka  \la \w_T + \gp)\in \bH_{t}^{\frac{1}{2}}(\Ga)$, it follows from \cite[Theorem~2.2]{lu2018regularity} that $ \curl \w \in \bH^1(\Om)$ and
		\begin{align*} 
			\|   \curl \w\|_{1}  &
			\ls \|\fp +  \ka ^2  \w -\cs\ka^2\Sm \textsf{P}^{\perp} \w \|  + \| \ii  \ka  \la \w_T + \gp\|_{ \frac12, \Ga} +  \|  \curl \w\| \\ 
			&\ls  \|\fp \|  +  \ka ^2 \|\w\|  +  \|\gp\|_{ \frac12, \Ga} +  \ka  \|\w\|_{1}\\ 
			&\ls  \|\fp \|   + \ka^{\frac12} \|\gp\|_{\Ga} +  \|\g\|_{ \frac12, \Ga} ,
		\end{align*}
		which completes the proof of \eqref{bSfgcurlE1}.
		
		{\it Proof of \eqref{bSfgEm}.} 
		In order to prove  \eqref{bSfgEm}, we rewrite the  equations \eqref{eq:transferDefProb} as
		\begin{equation*} 
			\left\{\begin{array}{rlrl} 
				\curl \curl\w- \w  &= (\fp - \cs\ka^2\Sm \textsf{P}^{\perp} \w)+(\ka^2-1)\w   \qquad &{\rm in }\  \Omega,    \\
				\curl\w \times  \n -\ii \w_T &=\gp  +\ii(\ka-1) \w_T  \qquad  &{\rm on }\ \Ga.
			\end{array} 
			\right.
		\end{equation*}
		Again using the formula $\dive_\Ga(\bm v\times\n)=\n\cdot\curl\bm v$, we have from \eqref{eq:transferDefProb} that
		\eqn{
			\dive_\Ga\big(\ii\ka \w_T+\gp \big)&=\dive_\Ga(\curl \w \times  \n)=(\curl\curl \w)\cdot\n=\big(\ka^2 \w+(\fp - \cs\ka^2\Sm \textsf{P}^{\perp} \w)\big)\cdot\n,} which implies that
		\eqn{
			\dive_\Ga(\ii \w_T)=\ka \w\cdot\n+\ka^{-1}\big((\fp - \cs\ka^2\Sm \textsf{P}^{\perp} \w)\cdot\n-\dive_\Ga\gp \big).} Therefore, from \cite[Theorem~1]{chen2024regularity} and $(\Sm\textsf{P}^{\perp}\w)\cdot \n=0$, we have 
		\eqn{
			\|\w\|_2&\ls \big\|(\fp - \cs\ka^2\Sm \textsf{P}^{\perp} \w)+(\ka^2-1)\w\big\|+\big\|\gp  +\ii(\ka-1) \w_T\big\|_{\frac12,\Ga}\\
			&\quad+\big\|\big((\fp - \cs\ka^2\Sm \textsf{P}^{\perp} \w)+(\ka^2-1)\w\big)\cdot\n-\dive_\Ga\gp  -\ii(\ka-1)\dive_\Ga(  \w_T)\big\|_{\frac12,\Ga}\\
			&=\big\|(\fp - \cs\ka^2\Sm \textsf{P}^{\perp} \w)+(\ka^2-1)\w\big\|+\big\|\gp  +\ii(\ka-1) \w_T\big\|_{\frac12,\Ga}\\
			&\quad+\big\|\ka^{-1}( \fp \cdot\n-\dive_\Ga\gp )+(\ka-1)\w\cdot\n\big\|_{\frac12,\Ga}\\
			&\ls \|\fp\|+\ka^2\|\w\|+\|\gp \|_{\frac{1}{2},\Ga}+\ka\|\w\|_1+\ka^{-1}\|\fp\cdot\n\|_{\frac{1}{2},\Ga}+\ka^{-1}\|\dive_\Ga\gp\|_{ \frac{1}{2},  \Ga },
		}
		which, combined with \eqref{bSfgeqStabEnerg}--\eqref{bSfgE1}, proves \eqref{bSfgEm} for $m = 2$.
		Similar to the proof of \eqref{eq:Hmstab},  \eqref{bSfgEm} can be proved by  considering the higher-order regularity estimate of problem \eqref{eq:transferDefProb} and applying induction for $m$. We omit the details and conclude  the proof of the lemma.

		\renewcommand{\theequation}{B.\arabic{equation}}
		\renewcommand{\thetheorem}{B.\arabic{theorem}}
		\renewcommand{\thelemma}{B.\arabic{lemma}}
		\renewcommand{\thefigure}{B.\arabic{figure}}
		\setcounter{equation}{0}
		\setcounter{theorem}{0}
		\setcounter{lemma}{0}
		\setcounter{figure}{0}

		\section{proof of Lemma~\ref{lem:Pherror}}\label{sc:appendixB}
		Without loss of generality, we prove Lemma~\ref{lem:Pherror} only for $\tilde{{\bm u}}_h:=\boldsymbol{P}_h^+\bm u$. 
		Firstly, we bound  the  energy-norm error by the $\bL^2$-norm error. 
		\begin{lemma}\label{lem:energy}
			For any $\bu \in \V$, it holds that
			\begin{align}
				\ener{ {\bm u}-\tilde{\bm u}_h }   & \ls   \ener{ {\bm u}-{\bm v}_h} + \ka \|{\bm u}-\tilde{\bm u}_h \|  \quad \forall \bm v_h\in \V_h.   	\label{enerbyL2}
			\end{align}  
		\end{lemma}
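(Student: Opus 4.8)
The plan is to run the classical C\'ea-type argument adapted to indefinite problems, using only the G\aa rding inequality~\eqref{eq:bgarding}, the continuity~\eqref{eq:bCont} of $b(\cdot,\cdot)$, and the Galerkin orthogonality $b(\bu-\tilde{\bm u}_h,\bv_h)=0$ for all $\bv_h\in\V_h$ that is built into the definition~\eqref{eq:EP-orth} of $\tilde{\bm u}_h=\boldsymbol{P}_h^+\bu$. First I would fix an arbitrary $\bv_h\in\V_h$, set $\be:=\bu-\tilde{\bm u}_h$ and $\bxi_h:=\tilde{\bm u}_h-\bv_h\in\V_h$ so that $\bu-\bv_h=\be+\bxi_h$, and note that by the triangle inequality $\ener{\be}\le\ener{\bu-\bv_h}+\ener{\bxi_h}$; hence it suffices to estimate the discrete quantity $\ener{\bxi_h}$.

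The steps, in order, would be: \textbf{(1)} apply~\eqref{eq:bgarding} to $\bxi_h$ to get $\ener{\bxi_h}^2\ls\big|b(\bxi_h,\bxi_h)\big|+\ka^2\|\bxi_h\|^2$; \textbf{(2)} use the orthogonality~\eqref{eq:EP-orth}, legitimate since $\bxi_h\in\V_h$, to write $b(\bxi_h,\bxi_h)=b(\tilde{\bm u}_h-\bv_h,\bxi_h)=b(\bu-\bv_h,\bxi_h)$; \textbf{(3)} invoke the continuity~\eqref{eq:bCont} to bound $|b(\bu-\bv_h,\bxi_h)|\ls\ener{\bu-\bv_h}\,\ener{\bxi_h}$, and absorb one factor of $\ener{\bxi_h}$ by Young's inequality, leaving $\ener{\bxi_h}^2\ls\ener{\bu-\bv_h}^2+\ka^2\|\bxi_h\|^2$; \textbf{(4)} re-express the residual $\bL^2$ term via the triangle inequality, $\ka\|\bxi_h\|\le\ka\|\be\|+\ka\|\bu-\bv_h\|\le\ka\|\be\|+\ener{\bu-\bv_h}$. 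Combining (3) and (4) yields $\ener{\bxi_h}\ls\ener{\bu-\bv_h}+\ka\|\be\|$, and feeding this into the triangle inequality of the first paragraph gives exactly~\eqref{enerbyL2}.

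I do not expect a genuine obstacle here — the argument is routine once the right splitting $\bu-\bv_h=\be+\bxi_h$ is in place. The one point that must be handled correctly is that the term $\ka^2\|\bxi_h\|^2$ generated by the G\aa rding inequality \emph{cannot} be absorbed back into $\ener{\bxi_h}^2$, because $b$ is indefinite; it has to be routed through the $\bL^2$-error $\|\be\|=\|\bu-\tilde{\bm u}_h\|$ together with the best-approximation error. This is precisely why the conclusion keeps the additive term $\ka\|\bu-\tilde{\bm u}_h\|$ rather than being a clean quasi-optimal bound, and it is the reason the subsequent duality argument in this appendix is required in order to control $\|\bu-\tilde{\bm u}_h\|$ independently.
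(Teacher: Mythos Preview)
Your argument is correct and follows the same C\'ea-type template as the paper's proof (G\aa rding inequality~\eqref{eq:bgarding}, Galerkin orthogonality~\eqref{eq:EP-orth}, continuity~\eqref{eq:bCont}, then Young's). The only difference is cosmetic: the paper applies~\eqref{eq:bgarding} directly to $\be=\bu-\tilde{\bm u}_h$ and uses orthogonality to replace the second slot by $\bu-\bv_h$, reaching~\eqref{enerbyL2} in one line without the detour through $\bxi_h$ and the extra triangle inequality on $\ka\|\bxi_h\|$.
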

		\begin{proof}
			By \eqref{eq:bgarding}  and \eqref{eq:EP-orth}, we have
			\eqn{ \ener{{\bm u}-\tilde{\bm u}_h}^2  & \leq \big(  \re -\im \big)  b({\bm u}-\tilde{\bm u}_h, {\bm u}-\tilde{\bm u}_h)  +2 \ka^2 \|{\bm u}-\tilde{\bm u}_h\|^2   \\
				&= \big(  \re -\im \big)  b({\bm u}-\tilde{\bm u}_h,{\bm u}-\bv_h )   +2 \ka^2 \|{\bm u}-\tilde{\bm u}_h\|^2 \\
				&\ls \ener{{\bm u}-\tilde{\bm u}_h}\ener{{\bm u}-\bv_h} + \ka^2 \|{\bm u}-\tilde{\bm u}_h\|^2.
			}
			Then \eqref{enerbyL2} follows by the Young's inequality.
		\end{proof}

		The following lemma bounds $\|({\bm u}-\tilde{\bm u}_h)_T\|_{\Ga}$ by $\|{\bm u}-\tilde{\bm u}_h\|$.
		
		\begin{lemma}\label{lem:bd}
			When $\ka h \ls 1$, there holds
			\begin{align}
				\|({\bm u}-\tilde{\bm u}_h)_T\|_{\Ga}  & \ls \|({\bm u}-{\bm v}_h)_T\|_{\Ga} + h^{-\frac{1}{2}}\|{\bm u}-{\bm v}_h\| +h^{\frac{1}{2}} \ener{ {\bm u}-{\bm v}_h} + \ka^{\frac{1}{2}}\|{\bm u}-\tilde{\bm u}_h\|   \quad \forall \bm v_h\in \V_h.   	\label{trace}
			\end{align}  
		\end{lemma}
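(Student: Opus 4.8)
The plan is to estimate the tangential trace of the error $\bm u-\tilde{\bm u}_h$ on $\Ga$ by splitting off a discrete function and using Lemma~\ref{lem:Vh0} together with the trace inequality \eqref{eq:trace} on the divergence-free complement. First I would write $\bm u-\tilde{\bm u}_h=(\bm u-\bm v_h)+(\bm v_h-\tilde{\bm u}_h)$ for an arbitrary $\bm v_h\in\V_h$, so that by the triangle inequality it suffices to bound $\|(\bm v_h-\tilde{\bm u}_h)_T\|_\Ga$, where $\bm w_h:=\bm v_h-\tilde{\bm u}_h\in\V_h$. The difficulty is that a discrete function only lies in $\bH(\curl;\Om)$, so its tangential trace on $\Ga$ is not controlled directly by an $\bL^2$-type norm; the remedy is to pass through the $\bH_0(\curl;\Om)$-conforming approximation provided by Lemma~\ref{lem:Vh0}.

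Concretely, I would invoke Lemma~\ref{lem:Vh0} to get $\bm w_h^0\in\V_h^0$ with
$\|\bm w_h-\bm w_h^0\|\ls h^{1/2}\|\bm w_{h,T}\|_\Ga$ and $\|\curl(\bm w_h-\bm w_h^0)\|\ls h^{-1/2}\|\bm w_{h,T}\|_\Ga$. Since $\bm w_h^0$ has vanishing tangential trace, $(\bm w_h)_T=(\bm w_h-\bm w_h^0)_T$ on $\Ga$, so it remains to estimate $\|(\bm w_h-\bm w_h^0)_T\|_\Ga$. Now $\bm w_h-\bm w_h^0$ is a genuine $\bH(\curl)$ function; I would split it via the $\bL^2$-orthogonal Helmholtz decomposition $\bm w_h-\bm w_h^0=\textsf P^0(\bm w_h-\bm w_h^0)+\textsf P^\perp(\bm w_h-\bm w_h^0)$, noting $\textsf P^0(\cdot)=\nabla q$ for some $q\in H^1(\Om)$ and $\textsf P^\perp(\cdot)\in\bX$. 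The gradient part contributes a tangential trace $(\nabla q)_T=\nabla_\Ga(q|_\Ga)$, which I would need to control separately (this is the delicate point, see below), while for the $\bX$-part I can apply the trace inequality \eqref{eq:trace}: $\|(\textsf P^\perp(\bm w_h-\bm w_h^0))_T\|_\Ga\ls\|\textsf P^\perp(\bm w_h-\bm w_h^0)\|^{1/2}\|\curl(\bm w_h-\bm w_h^0)\|^{1/2}$, and both factors are bounded by the Lemma~\ref{lem:Vh0} estimates. Using an absorption (Young's inequality with $\ka$-weights) on $\ka h\ls1$, this yields a bound of the type $\|(\bm w_h-\bm w_h^0)_T\|_\Ga\ls h^{-1/2}\|\bm w_h\|+h^{1/2}\|\curl\bm w_h\|$, up to the gradient contribution.

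The main obstacle is handling the gradient component $\nabla q$ of the decomposition: it need not vanish on $\Ga$, and its surface gradient $\nabla_\Ga(q|_\Ga)$ is not obviously controlled by $\bL^2(\Om)$-norms of $\bm w_h-\bm w_h^0$. I would resolve this either by choosing the $\bH_0(\curl)$-approximant in Lemma~\ref{lem:Vh0} so that $\bm w_h-\bm w_h^0$ is itself (discretely) divergence-free — which is how the proof of that lemma in \cite{houston2005interior,lu2024preasymptotic} is typically arranged, so that $\textsf P^0(\bm w_h-\bm w_h^0)$ is small or absent — or, more robustly, by replacing the plain Helmholtz split with the discrete/continuous pair of Lemma~\ref{lem:HD} applied after first reducing to $\V_h^0$, and exploiting $\nabla U_h^0\subset\V_h^0$ to absorb the gradient into $\bm w_h^0$. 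Once the gradient part is eliminated this way, the chain $\|(\bm u-\tilde{\bm u}_h)_T\|_\Ga\ls\|(\bm u-\bm v_h)_T\|_\Ga+\|(\bm w_h)_T\|_\Ga$ and $\|\bm w_h\|\le\|\bm u-\bm v_h\|+\|\bm u-\tilde{\bm u}_h\|$, $\|\curl\bm w_h\|\ls\ener{\bm u-\bm v_h}+\ener{\bm u-\tilde{\bm u}_h}$, together with Lemma~\ref{lem:energy} to replace $\ener{\bm u-\tilde{\bm u}_h}$ by $\ener{\bm u-\bm v_h}+\ka\|\bm u-\tilde{\bm u}_h\|$ and a final Young's inequality (using $\ka h\ls1$ to turn $h^{1/2}\cdot\ka$ into $\ka^{1/2}\cdot(\ka h)^{1/2}\ls\ka^{1/2}$), delivers exactly \eqref{trace}.
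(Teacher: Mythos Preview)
Your approach has a genuine gap that prevents it from reaching the stated bound. Trace back the prefactor in front of $\|\bm u-\tilde{\bm u}_h\|$: after your claimed estimate $\|(\bm w_h)_T\|_\Ga\ls h^{-1/2}\|\bm w_h\|+h^{1/2}\|\curl\bm w_h\|$ and the triangle inequality $\|\bm w_h\|\le\|\bm u-\bm v_h\|+\|\bm u-\tilde{\bm u}_h\|$, the $\bL^2$-error enters with coefficient $h^{-1/2}$, not $\ka^{1/2}$. Under $\ka h\ls 1$ one has $h^{-1/2}\gtrsim\ka^{1/2}$, so the bound you obtain is strictly weaker than \eqref{trace}. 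This is not a cosmetic difference: the whole point of the $\ka^{1/2}$ factor is that in the next lemma one multiplies by $h^{1/2}$ and absorbs $(\ka h)^{1/2}\|\bm u-\tilde{\bm u}_h\|$ into the left side; with $h^{-1/2}$ you would get $\|\bm u-\tilde{\bm u}_h\|$ back with coefficient of order one, and the argument collapses. The reason your route cannot do better is that it never uses the defining property \eqref{eq:EP-orth} of $\tilde{\bm u}_h=\bP_h^+\bm u$; without that orthogonality, $\tilde{\bm u}_h$ is just some discrete function and the inverse trace inequality $h^{-1/2}$ is all one can expect.

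The paper's proof is organized around precisely this point. It takes the discrete Helmholtz decomposition $\tilde{\bm u}_h-\bm v_h=\nabla r_h+\bm w_h$ and uses $d(\bm u-\tilde{\bm u}_h,\nabla r_h)=b(\bm u-\tilde{\bm u}_h,\nabla r_h)=0$ (with $d$ the zero-order part of $b$) to kill the gradient component; taking the imaginary part then isolates $\ka\la\|(\bm u-\tilde{\bm u}_h)_T\|_\Ga^2$ on the left and produces $\ka^2\|\bm u-\tilde{\bm u}_h\|\,\|\cdot\|$-type terms on the right, which is exactly where the $\ka^{1/2}$ prefactor originates. The remaining term $h^{-1/2}\|\bm w\|$ (with $\bm w$ the continuous divergence-free counterpart of $\bm w_h$) is then handled by a short duality argument against the auxiliary elliptic problem, which buys an extra factor of $h$ and converts $h^{-1/2}\|\bm w\|$ into $h^{-1/2}\|\bm u-\bm v_h\|+h^{1/2}\ener{\bm u-\bm v_h}+(\ka h)^{1/2}\ka^{1/2}\|\bm u-\tilde{\bm u}_h\|$. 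Your proposal contains neither of these two ingredients.

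A secondary issue: your proposed handling of the gradient part $(\nabla q)_T$ does not work as stated. Arranging $\bm w_h-\bm w_h^0$ to be \emph{discretely} divergence-free does not place it in $\bX$, so the trace inequality \eqref{eq:trace} is still unavailable; and ``absorbing $\nabla U_h^0$ into $\bm w_h^0$'' via Lemma~\ref{lem:HD} only removes gradients with zero boundary trace, which is precisely the part that was never problematic. The surface gradient $\nabla_\Ga(q|_\Ga)$ carries the boundary degrees of freedom and cannot be eliminated this way.
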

		\begin{proof}
			Let ${\bm \Phi}_h:=\tilde{{\bm u}}_h-{\bm v}_h\in \V_h$. Similar  to Lemma \ref{lem:HD}, we have the following decompositions (see, e.g., \cite[Remark~3.46, Lemma~7.6]{monk2003}):
			\begin{align}
				\label{decom2}
				{\bm \Phi}_h=\nabla r+\w=\nabla r_h+\w_h,
			\end{align}
			where $r\in H^1(\Om)$, $\w\in \bH^1(\Om)\cap \bH_{0}(\dive^0;\Om)$, $r_h\in U_h$ and  $\w_h\in \V_h$ such that
			\begin{align}
				\label{wd}
				\|\w-\w_h\| \lesssim h\|{\curl}\ {\bm \Phi}_h\| \lesssim h \big( \ener{ {\bm u}-{\bm v}_h}+ \ka \|   {\bm u}-\tilde{{\bm u}}_h \|  \big),
			\end{align}
			where we have used Lemma~\ref{lem:energy} and the triangle inequality to derive the last inequality. 
			
			Next, we   establish  a relationship between $\|({\bm u}-\tilde{\bm u}_h)_T\|_{\Ga}$ and $\|\w\| $. Denote by 
			\begin{align*}
				d({\bm u},\bm v):=-\ka^2({\bm u},\bm v) + \cs\ka^2(\Sm \textsf{P}^{\perp}\bm u,\Sm\textsf{P}^{\perp}\bm v)-{\ii} \ka\la \langle {\bm u}_T,\bm v_T\rangle.
			\end{align*}
			From \eqref{eq:defbuv} and \eqref{eq:EP-orth}, we have
			\begin{align}\label{drh}
				d({\bm u}-\tilde{\bm u}_h,\na r_h)=b  ({\bm u}-\tilde{\bm u}_h,\na r_h)=0,
			\end{align}
			which implies
			\begin{align*}
				\im d({\bm u}-\tilde{\bm u}_h,{\bm u}-\tilde{\bm u}_h)=\im d({\bm u}-\tilde{\bm u}_h,{\bm u}-{\bm v}_h-\w_h),
			\end{align*}
			and hence from the Young's inequality and the $\bL^2$ stability of $\Sm$ and $\textsf{P}^{\perp}$ we obtain
			\begin{align*}
				\ka\la\|({\bm u}-\tilde{\bm u}_h)_T\|_{\Ga}^2\ls \ka\la\|({\bm u}-{\bm v}_h)_T\|_{\Ga}^2+\ka\la\|\w_{h,T} \|_{\Ga}^2+\ka^2\|{\bm u}-{\bm v}_h\| ^2+\ka^2\|\w_h\| ^2 +\ka^2 \|{\bm u}-\tilde{\bm u}_h\|^2.
			\end{align*}
			Therefore, by noting $\| \w_{h,T} \|_{\Ga}\ls h^{-\frac12}\|\w_h\|$, $\ka h \ls 1$, and using \eqref{wd}, we conclude that
			\begin{align}\label{lb}
				\|({\bm u}-\tilde{\bm u}_h)_T\|_{\Ga} &\ls  \|({\bm u}-{\bm v}_h)_T\|_{\Ga}+ \ka^{\frac12} \|{\bm u}-{\bm v}_h\| +\ka^\frac12    \|{\bm u}-\tilde{\bm u}_h\|  \notag \\
				&\quad  + h^{-\frac12}\|\w\| +h^\frac12\ener{ {\bm u}-{\bm v}_h }.
			\end{align}
			
			We now utilize the duality argument to estimate $\|\w\|$, first we begin by introducing
			the dual problem:
			\begin{alignat}{2}
				\label{D-equations}
				{\curl}\,{\curl}\,{\bm z}-\ka^2 {\bm z} +\cs\ka^2 \Sm {\textsf{P}^{\perp} \bm z}&=\w \qquad &&{\rm in }\ \Om,\\
				\label{D-boundary}{\curl}\,{\bm z}\times \boldsymbol{\nu}+{\ii}\ka \la {\bm z}_T&= \bm{0} \qquad &&{\rm on }\
				\Ga,
			\end{alignat}
			or in the variational form:
			\eq{\label{DPz}
				b (\bm v,\bm z)=(\bm v, \bm w)  \quad\forall \bm v\in \V.
			}
			Noting that $\w \cdot \boldsymbol{\nu} = 0$ on $\Ga$, we derive the following $\bH^2$ regularity estimate for $\bz$ from Lemma~\ref{lem:goodRegular}:
			\begin{align}
				\|{\bm z}\|_{2}&\ls \|\w\|.\label{D-regularity-H2}
			\end{align}
			From \eqref{DPz} and \eqref{eq:interpener}, we deduce that
			\begin{align}
				\notag
				({\bm u}-\tilde{\bm u}_h,\w)
				&=b  ({\bm u}-\tilde{\bm u}_h,{\bm z}-\Pih {\bm z})
				\lesssim   \ener{ {\bm u}-\tilde{\bm u}_h } \ener{ {\bm z}-\Pih {\bm z}  }\\
				&\ls   \ener{ {\bm u}-\tilde{\bm u}_h }  h\|\bm z\|_2\ls   h \big( \ener{ {\bm u}-{\bm v}_h}+ \ka \|   {\bm u}-\tilde{{\bm u}}_h \|  \big) \|\bm w\|. \label{Dpart4}
			\end{align}
			Since $\|\w\|^2=({\bm \Phi}_h,\w)=({\bm u}-{\bm v}_h,\w)-({\bm u}-\tilde{\bm u}_h,\w),$ we have from \eqref{Dpart4} that
			\begin{align}
				\label{w}
				\|\w\| \lesssim \|{\bm u}-{\bm v}_h\|+   h\big( \ener{ {\bm u}-{\bm v}_h}+ \ka \|   {\bm u}-\tilde{{\bm u}}_h \|  \big),
			\end{align}
			which together with \eqref{lb} completes the proof of this lemma. 
		\end{proof}
		
		The following lemma gives an estimate of $\|{\bm u}-\tilde{{\bm u}}_h\|$. 
		
		\begin{lemma}\label{lem:l2err}
			When $\ka h $ is sufficiently small, we have
			\begin{align}
				\label{error}
				\|{\bm u}-\tilde{{\bm u}}_h\|  \lesssim  \|{\bm u}-{\bm v}_h\| +h\ener{ {\bm u}-{\bm v}_h }+h^\frac12\|({\bm u}-{\bm v}_h)_T\|_{\Ga}  \quad \forall \bm v_h\in \V_h.
			\end{align}
		\end{lemma}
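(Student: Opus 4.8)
\emph{Proof plan.} Set $\bm e:=\bm u-\tilde{\bm u}_h$ and recall the Galerkin orthogonality $b(\bm e,\bm v_h)=0$ for all $\bm v_h\in\Vh$ from \eqref{eq:EP-orth}, the energy-by-$\bL^2$ bound $\ener{\bm e}\ls\ener{\bm u-\bm v_h}+\ka\|\bm e\|$ from Lemma~\ref{lem:energy}, and the trace bound of Lemma~\ref{lem:bd}. The idea is an Aubin--Nitsche duality argument in which the divergence-free and the gradient parts of the error are handled separately, since the dual problem governed by $b$ is only well posed, with the sharp $\ka$-dependence provided by Lemma~\ref{lem:goodRegular}, when its data lies in $\bm H(\dive^0;\Om)$; the smallness of $\ka h$ and the standing assumption $\ka\gg1$ are both used to absorb residual contributions proportional to $\|\bm e\|$.

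\emph{Step 1 (solenoidal part).} Using the $\bL^2$-orthogonal Helmholtz decomposition of Lemma~\ref{lem:proju} we write $\|\bm e\|^2=\|\textsf{P}^{\perp}\bm e\|^2+\|\textsf{P}^0\bm e\|^2$. Let $\bm z\in\V$ solve $b(\bm v,\bm z)=(\bm v,\textsf{P}^{\perp}\bm e)$ for all $\bm v\in\V$ (the auxiliary problem with conjugated boundary term, covered by Lemma~\ref{lem:goodRegular}). Since $\textsf{P}^{\perp}\bm e\in\bm H_0(\dive^0;\Om)$ has vanishing normal trace and the boundary datum is zero, Remark~\ref{rem:Sfstab}\,(a) with $m=2$ gives $\|\bm z\|_2\ls\|\textsf{P}^{\perp}\bm e\|$. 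Combining this with $b(\bm e,\Pih\bm z)=0$, the orthogonality $(\textsf{P}^0\bm e,\textsf{P}^{\perp}\bm e)=0$, the interpolation bound \eqref{eq:interpener} and Lemma~\ref{lem:energy} yields
\[
\|\textsf{P}^{\perp}\bm e\|^2=(\bm e,\textsf{P}^{\perp}\bm e)=b(\bm e,\bm z-\Pih\bm z)\ls h\,\ener{\bm e}\,\|\textsf{P}^{\perp}\bm e\|\ls h\big(\ener{\bm u-\bm v_h}+\ka\|\bm e\|\big)\|\textsf{P}^{\perp}\bm e\|,
\]
so $\|\textsf{P}^{\perp}\bm e\|\ls h\,\ener{\bm u-\bm v_h}+\ka h\|\bm e\|$, and, since $\Sm$ is an $\bL^2$-contraction, $\|\Sm\textsf{P}^{\perp}\bm e\|$ obeys the same bound.

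\emph{Step 2 (gradient part).} Here I pass to $\bm\Phi_h:=\tilde{\bm u}_h-\bm v_h\in\Vh$, so that $\|\bm e\|\le\|\bm u-\bm v_h\|+\|\bm\Phi_h\|$, and invoke the discrete and continuous Helmholtz decompositions $\bm\Phi_h=\nabla r_h+\bm w_h=\nabla r+\bm w$ of \eqref{decom2}; by $\bL^2$-orthogonality $\|\bm\Phi_h\|^2=\|\nabla r_h\|^2+\|\bm w_h\|^2$ and $\|\nabla r-\nabla r_h\|=\|\bm w-\bm w_h\|\ls h\|\curl\bm\Phi_h\|$. The solenoidal discrete part $\bm w_h$ is controlled exactly as in the proof of Lemma~\ref{lem:bd}, via \eqref{wd}--\eqref{w} together with $\|\curl\bm\Phi_h\|\ls\ener{\bm u-\bm v_h}+\ka\|\bm e\|$, giving $\|\bm w_h\|\ls\|\bm u-\bm v_h\|+h\,\ener{\bm u-\bm v_h}+\ka h\|\bm e\|$. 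For the irrotational discrete part, since $\curl\nabla r_h=\bm0$ and $\textsf{P}^{\perp}\nabla r_h=\bm0$, the relation $b(\bm e,\nabla r_h)=0$ collapses to $\ka(\bm e,\nabla r_h)=-\ii\la\,\langle\bm e_T,(\nabla r_h)_T\rangle$. Inserting this into $\|\nabla r_h\|^2=(\bm\Phi_h,\nabla r_h)=(\bm u-\bm v_h,\nabla r_h)-(\bm e,\nabla r_h)$ and estimating $|(\bm e,\nabla r_h)|\le\ka^{-1}\la\,\|\bm e_T\|_\Ga\,\|(\nabla r_h)_T\|_\Ga$ with the aid of the inverse trace inequality on $\Vh$, Lemma~\ref{lem:bd} and Young's inequality produces a bound for $\|\nabla r_h\|$, and then $\|\nabla r-\nabla r_h\|\ls h\|\curl\bm\Phi_h\|$ controls $\|\nabla r\|=\|\textsf{P}^0\bm\Phi_h\|$.

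\emph{Step 3 (conclusion).} Collecting the bounds for $\|\textsf{P}^{\perp}\bm e\|$, $\|\bm w_h\|$ and $\|\nabla r_h\|$ and using $\|\bm e\|\le\|\bm u-\bm v_h\|+\|\nabla r_h\|+\|\bm w_h\|$, every term on the right is either of the asserted form $\|\bm u-\bm v_h\|+h\ener{\bm u-\bm v_h}+h^{1/2}\|(\bm u-\bm v_h)_T\|_\Ga$ or is proportional to $\|\bm e\|$ with a factor that is a positive power of $\ka h$ (hence small) or of $\ka^{-1}$ (hence small, as $\ka\gg1$); absorbing the latter into the left-hand side finishes the proof. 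The step I expect to be the main obstacle is the gradient part: the duality argument does not reach it, and the naive estimate of $\langle\bm e_T,(\nabla r_h)_T\rangle$ through an inverse trace inequality produces competing powers of $h^{-1}$ and $\ka^{-1}$ that must be played off against Lemma~\ref{lem:bd} very carefully to recover the stated exponents rather than a weaker mesh condition; this is exactly where the modifications relative to \cite[Appendix~A]{lu2024preasymptotic} enter.
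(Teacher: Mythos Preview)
Your Step~1 is fine and matches the paper. The gap is in Step~2, exactly where you flag it: the ``careful play-off'' between $h^{-1}$ and $\ka^{-1}$ does not close. From $b(\bm e,\nabla r_h)=0$ with $r_h\in U_h$ you get $|(\bm e,\nabla r_h)|\le \la\ka^{-1}\|\bm e_T\|_\Ga\|(\nabla r_h)_T\|_\Ga$. Any control of $\|(\nabla r_h)_T\|_\Ga$ through inverse trace or through $\bm\Phi_{h,T}$ and $\bm w_{h,T}$ introduces a factor $h^{-1/2}$; combining with Lemma~\ref{lem:bd}, which only gives $\|\bm e_T\|_\Ga\ls\cdots+\ka^{1/2}\|\bm e\|$, you are left with an $\|\bm e\|$ term carrying a coefficient $\ka^{-1/2}h^{-1/2}=(\ka h)^{-1/2}$, or an $\|\bm e\|^2$ term with coefficient $\sim 1$. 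These are large, not small, when $\ka h$ is small, so they cannot be absorbed and the argument stalls.

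The paper avoids this by first pulling off the tangential part of $\bm\Phi_h$: it uses Lemma~\ref{lem:Vh0} to replace $\bm\Phi_h$ by $\bm\Phi_h^c\in\V_h^0$ at cost $h^{1/2}\|\bm\Phi_{h,T}\|_\Ga$ (this is where the $h^{1/2}\|(\bm u-\bm v_h)_T\|_\Ga$ term ultimately comes from), and only then performs the discrete Helmholtz decomposition $\bm\Phi_h^c=\bm w_h^0+\nabla r_h^0$ with $r_h^0\in U_h^0$. Because $r_h^0|_\Ga=0$ one has $(\nabla r_h^0)_T=\nabla_\Ga r_h^0=0$, so the boundary term in $b(\bm e,\nabla r_h^0)=0$ vanishes identically and the clean orthogonality $(\bm e,\nabla r_h^0)=0$ follows with no $\ka^{-1}$ factor at all. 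The duality is then run with source $\bm w^0$ (not $\textsf P^\perp\bm e$), and the identity $\|\bm e\|^2+\|\bm w^0\|^2=(\bm e+\bm w^0,\bm u-\bm v_h)-(\bm e+\bm w^0,\bm\Phi_h-\bm\Phi_h^c)-(\bm e+\bm w^0,\bm w_h^0-\bm w^0)-2\re(\bm e,\bm w^0)$ gives a bound containing only $h^{1/2}\|\bm e_T\|_\Ga$, which Lemma~\ref{lem:bd} turns into $(\ka h)^{1/2}\|\bm e\|$---now absorbable. The essential missing idea in your proposal is this passage to $\V_h^0$ before decomposing.
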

		\begin{proof} The idea is to convert the estimation of $\|{\bm u}-\tilde{{\bm u}}_h\| $ to that of $\|({\bm u}-\tilde{\bm u}_h)_T\|_{\Ga}$. Suppose $\ka h\ls 1$. For
			${\bm \Phi}_h=\tilde{{\bm u}}_h-{\bm v}_h$, according   to Lemma~\ref{lem:Vh0}, there exists ${\bm \Phi}_h^c\in \V_h^0$ such that
			\begin{align}
				\label{L2}
				\|{\bm \Phi}_h-{\bm \Phi}_h^c\| +h\|{\curl\,}({\bm \Phi}_h-{\bm \Phi}_h^c)\| &\lesssim h^{\frac{1}{2}}\| {\bm \Phi}_{h,T}\|_{\Ga}.
			\end{align}
			From Lemma~\ref{lem:HD} we have the following discrete Helmholtz decomposition for ${\bm \Phi}_h^c$:
			$${\bm \Phi}_h^c=\w_h^0+\nabla r_h^0,$$
			where $r_h^0\in U_h^0$ and $\w_h^0\in\V_h^0$ is discrete divergence-free. Moreover, there exists  $\w^0\in \bH_0({\curl})$ such that $\dive \w^0=0$, $\curl w^0=\curl {\bm \Phi}_h^c$ and
			\begin{align}
				\label{w1}
				\|\w_h^0-\w^0\|\lesssim h\|{\curl}\,\w_h^0\| =h\|{\curl}\,{\bm \Phi}_h^c\|.
			\end{align}
			From \eqref{eq:EP-orth}, we know that
			\begin{align}
				\label{dis-div}
				({\bm u}-\tilde{\bm u}_h,\nabla \phi_h^0)=0  \quad \forall \phi_h^0\in U_h^0.
			\end{align}
			Next, we  introduce the following dual problem:
			\begin{alignat}{2}
				\label{A-equations}
				{\curl}\,{\curl}\,{{\bm \Psi}}-\ka^2 {{\bm \Psi}} + \cs\ka^2{\Sm \textsf{P}^{\perp}{\bm \Psi}}&=\w^0  \quad &&{\rm in }\ \Om,\\
				\label{A-boundary}{\curl}\,{{\bm \Psi}}\times \boldsymbol{\nu}+{\ii}\ka \la {{\bm \Psi}}_T&= \bm{0} \quad  &&{\rm on }\  \Ga,
			\end{alignat}
			or in the variational form: 
			\eq{\label{DAz}
				b (\bm v,\bm \Psi)=(\bm v, \bm w^0)  \quad\forall \bm v\in \V.
			}
			By Lemma~\ref{lem:goodRegular}, we have the following  estimates:
			\begin{align}
				\label{A-regularity}
				\|{\bm \Psi}\|_{\bH^1(\curl)}&\lesssim \|\w^0\|,\\
				\label{A-regularity-H2}
				\|{\bm \Psi}\|_{2}&\ls \|\w^0\| +\ka^{-1}\|\w^0\|_{1}\lesssim \|\w^0\| +\ka^{-1}\|\curl\w^0\| \notag\\
				&=\|\w^0\| +\ka^{-1}\|{\curl}\,{\bm \Phi}_h^c\|.
			\end{align}
			Using \eqref{DAz}, we obtain
			\begin{align}
				&({\bm u}-\tilde{\bm u}_h,\w^0)
				\label{part4} =b ({\bm u}-\tilde{\bm u}_h,{\bm \Psi})=b ({\bm u}-\tilde{\bm u}_h,{\bm \Psi}-\Pih\bm \Psi).
			\end{align}
			Using \eqref{A-regularity}--\eqref{part4} and Lemma~\ref{lem:interpEst}, we conclude that
			\begin{align}\label{approx4}
				&\big|({\bm u}-\tilde{\bm u}_h,\w^0)\big|
				\lesssim \ener{ {\bm u}-\tilde{\bm u}_h } h \|{\bm \Psi}\|_{\bH^1(\curl)} +\big(\ka^2h^2\|{\bm u}-\tilde{\bm u}_h\|+\ka h^{\frac32}\|({\bm u}-\tilde{\bm u}_h)_T\|_{\Ga}\big)\|{\bm \Psi}\|_{2}\notag
				\\
				&\lesssim  h\ener{ {\bm u}-\tilde{\bm u}_h } \|\w^0\|+\big( h^2\ener{ {\bm u}-\tilde{\bm u}_h } +  h^{\frac32}\|({\bm u}-\tilde{\bm u}_h)_T\|_{\Ga}\big)\|{\curl}\,{\bm \Phi}_h^c\|.
			\end{align}
			From \eqref{dis-div} and the orthogonality between $\w^0$ and $\nabla r_h^0$, we
			may get
			\eqn{\|{\bm u}-\tilde{\bm u}_h\|^2+\|\w^0\| ^2&=({\bm u}-\tilde{\bm u}_h+\w^0,{\bm u}-\tilde{\bm u}_h+\w^0)-2\re({\bm u}-\tilde{\bm u}_h,\w^0)\\
				&=({\bm u}-\tilde{\bm u}_h+\w^0,{\bm u}-{\bm v}_h)-({\bm u}-\tilde{\bm u}_h+\w^0,{\bm \Phi}_h-{\bm \Phi}_h^c)\\
				&\quad-({\bm u}-\tilde{\bm u}_h+\w^0,\w_h^0-\w^0)-2\re({\bm u}-\tilde{\bm u}_h,\w^0), }
			which together with  (\ref{approx4}) and the Young's inequality, gives
			\begin{align*}
				\|{\bm u}-\tilde{\bm u}_h\|^2 +\|\w^0\| ^2&\lesssim 
				\|{\bm u}-{\bm v}_h\| ^2+\|{\bm \Phi}_h-{\bm \Phi}_h^c\| ^2+\|\w_h^0-\w^0\| ^2\\
				&\quad+ h^2\ener{ {\bm u}-\tilde{\bm u}_h }^2+  h \|({\bm u}-\tilde{\bm u}_h)_T\|_{\Ga}^2+h^2\|{\curl}\,{\bm \Phi}_h^c\| ^2.
			\end{align*}
			By Lemma~\ref{lem:energy}, (\ref{L2}) and (\ref{w1}), we have
			\begin{align}
				\notag
				\|{\bm u}-\tilde{\bm u}_h\|+\|\w^0\| &\lesssim \|{\bm u}-{\bm v}_h\| +h^{\frac{1}{2}}\| {\bm \Phi}_{h,T} \|_{\Ga}+h\|{\curl}\,{\bm \Phi}_h^c\| \\
				\label{Hw}
				&\quad+  h\big( \ener{ {\bm u}-{\bm v}_h}+ \ka \|   {\bm u}-\tilde{{\bm u}}_h \|  \big)+ h^\frac12\|({\bm u}-\tilde{\bm u}_h)_T\|_{\Ga}.
			\end{align}
			From (\ref{L2}) and Lemma~\ref{lem:energy} we have
			\begin{align*}
				h\|{\curl}\,{\bm \Phi}_h^c\| &\leq h\|{\curl}\,({\bm \Phi}_h-{\bm \Phi}_h^c)\| +h\|{\curl}\,{\bm \Phi}_h\| \lesssim h^{\frac{1}{2}}\| {\bm \Phi}_{h,T} \|_{\Ga}+ h\big( \ener{ {\bm u}-{\bm v}_h}+ \ka \|   {\bm u}-\tilde{{\bm u}}_h \|  \big).
			\end{align*}
			While using  the triangle inequality, we get
			\begin{align*}
				h^{\frac{1}{2}}\| {\bm \Phi}_{h,T} \|_{\Ga}&\leq h^{\frac{1}{2}}\big(\|({\bm u}-\tilde{\bm u}_h)_T\|_{\Ga}+ \|({\bm u}-{\bm v}_h)_T\|_{\Ga}\big).
			\end{align*}
			Inserting the above two inequalities into (\ref{Hw}), we obtain that the following estimate holds if  $\ka h$ is sufficiently small, 
			\begin{align}
				\label{L2tilde}
				\|{\bm u}-\tilde{{\bm u}}_h\| & \lesssim h^{\frac{1}{2}}\|({\bm u}-\tilde{\bm u}_h)_T\|_{\Ga}+h^{\frac{1}{2}}\|({\bm u}-{\bm v}_h)_T\|_{\Ga}+\|{\bm u}-{\bm v}_h\| + h\ener{ {\bm u}-{\bm v}_h }  \quad \forall \bm v_h\in\V_h,
			\end{align}
			which together with Lemma~\ref{lem:bd} completes the proof of the lemma.
		\end{proof}
		
		Finally, the proof of Lemma~\ref{lem:Pherror} follows by combining Lemmas~\ref{lem:energy}--\ref{lem:l2err} and using the fact that $\ka h$ is sufficiently small.\hfill \qedsymbol

		\bibliographystyle{abbrv} 
		\bibliography{reference}

	\end{document}